\newtheorem{thm}{Theorem}[section]
\newtheorem{prop}[thm]{Proposition}
\newtheorem{cor}[thm]{Corollary}
\newtheorem{lemma}[thm]{Lemma}
\newtheorem{coj}[thm]{Conjecture}
\theoremstyle{definition}
\newtheorem{defn}[thm]{Definition} 
\newtheorem{ex}[thm]{Example} 
\theoremstyle{remark}
\newtheorem{rmk}[thm]{Remark}
\DeclareMathOperator{\spec}{Spec}
\DeclareMathOperator{\ext}{Ext}
\DeclareMathOperator{\trdeg}{tr.deg.}
\DeclareMathOperator{\can}{can}
\DeclareMathOperator{\Sp}{Sp}
\DeclareMathOperator{\ccd}{ccd}
\title{On the cohomological dimension of Siegel modular varieties and the modularity of formal Siegel modular forms}
\author{Haocheng Fan}
\date{}
\begin{document}
\maketitle

\begin{abstract}
We show that the coherent cohomological dimension of the Siegel modular variety $A_{g,\Gamma}$ (of genus $g\geq2$ and level $\Gamma$) is at most $\frac{1}{2}g(g+1)-2$. As a corollary, we show that the boundary of the compactified Siegel modular variety satisfies the Grothendieck-Lefschetz condition. This implies, in particular, that formal Siegel modular forms of genus $g\geq2$ are automatically classical Siegel modular forms. Our result generalizes the work of Bruinier and Raum on the modularity of formal Siegel modular forms in \cite{bruinier2015kudla} and \cite{bruinier2024formal}.
\end{abstract}

\section{Introduction}

In Kudla's famous paper \cite{kudla1997algebraic}, he conjectured the modularity of the generating series of special cycles. There are fruitful results in this direction, for example, assuming absolutely convergence of the generating series, Wei Zhang proved the modularity conjecture in the Siegel case in \cite{zhang2009modularity} and Yifeng Liu in the unitary case in \cite{liu2012arithmetic}. To remove the convergence condition, Bruinier and Raum proved the modularity of formal Fourier--Jacobi series in the Siegel case in \cite{bruinier2015kudla} and Jiacheng Xia in some unitary case in \cite{xia2022some} by using technical analytic methods. However, recently Bruinier and Raum presented a new approach in this automatic convergence problem in the Siegel case in \cite{bruinier2024formal}, which is closely related to our results on the coherent cohomological dimension of Shimura varieties. Actually, in an ongoing project joint with Wenxuan Qi, Linli Shi, Peihang Wu, Liang Xiao and Yichao Zhang, we have a new approach of the modularity conjecture, where the cohomological dimension appears naturally.

In this paper, we study the coherent cohomological dimension of Siegel modular varieties and its implications for the modularity of formal Siegel modular forms. We hope to extend these results to more general Shimura varieties in a future paper joint with Peihang Wu. Roughly speaking, we expect this extension applies to Shimura varieties for which the associated Shimura datum $(G,X)$ satisfies the condition that $G^{ad}$ is $\mathbb{Q}$-simple and has $\mathbb{Q}$-rank at least 2.

Recall that the \emph{coherent cohomological dimension} of a scheme $X$, denoted $\ccd(X)$, is defined as:
$$
\ccd(X) = \max\left\{ i \geq 0 \mid \exists \mathcal{F} \in \mathrm{Coh}(X),\ H^i(X, \mathcal{F}) \neq 0 \right\}.
$$

We fix a field $k$ that is algebraically closed and characteristic 0. When we talk about variety, we mean a reduced separated scheme of finite type over $k$, but it does not need to be irreducible. Let $A_{g,\Gamma}$ be the Siegel modular variety of genus $g\geq 2$ and arithmetic level $\Gamma\subset \Sp_{2g}(\mathbb{Q})$ (i.e., $\Gamma$ is commensurable with $\Sp_{2g}(\mathbb{Z})$ and contains a principal congruence subgroup $\Gamma(N)$ for some $N$). Our main result is the following.

\begin{thm}[Theorem \ref{ccd_bound}]\label{main_thm} 
Let $A_{g,\Gamma}$ be the Siegel modular variety of genus $g\geq2$ and level $\Gamma$, with dimension $d = \frac{1}{2}g(g+1)$. Then
$$
\ccd(A_{g,\Gamma}) \leq d - 2.
$$
\end{thm}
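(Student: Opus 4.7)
The plan is to bound $\ccd(A_{g,\Gamma})$ by combining two ingredients: Lichtenbaum's automatic vanishing in top degree, and a local cohomology analysis along the boundary of the Baily--Borel compactification. Let $X := A_{g,\Gamma}^{BB}$, a normal projective variety of dimension $d$ on which the Hodge line bundle $\omega$ is ample, with Satake boundary $Y := \partial X$ of codimension $g$ (the largest stratum being the Satake compactification of $A_{g-1}$). Since no irreducible component of $A_{g,\Gamma}$ is proper, Lichtenbaum's theorem gives $H^d(A_{g,\Gamma}, \mathcal F) = 0$ for every coherent $\mathcal F$ automatically. The substance of the theorem is therefore the remaining vanishing $H^{d-1}(A_{g,\Gamma}, \mathcal F) = 0$ for every coherent $\mathcal F$.

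To obtain this, I would analyze the long exact sequence of local cohomology attached to the closed immersion $Y \hookrightarrow X$, applied to a coherent extension $\overline{\mathcal F}$ of $\mathcal F$ to $X$ (which exists by Nagata's extension theorem). The vanishing $H^{d-1}(A_{g,\Gamma}, \mathcal F) = 0$ reduces to suitable control of both $H^{d-1}(X, \overline{\mathcal F})$ and the map $H^d_Y(X, \overline{\mathcal F}) \to H^d(X, \overline{\mathcal F})$. The codimension condition $\operatorname{codim}_X(Y) = g \ge 2$ is what one expects to translate into depth-type constraints on local cohomology and, in favorable circumstances (e.g.\ if $X$ were Cohen--Macaulay along $Y$), would directly force the required vanishing and injectivity. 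The hypothesis $g \ge 2$ is precisely what produces the ``$-2$'' in the final bound.

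The main obstacle is the singular geometry of $X^{BB}$ along $Y$: the Baily--Borel variety is far from smooth at its boundary cusps, so standard depth theorems for Cohen--Macaulay schemes do not apply directly, and the injectivity statement must be established uniformly for every coherent $\overline{\mathcal F}$, not only locally free ones. To circumvent this, one can transfer the problem to a smooth toroidal compactification $X^{\mathrm{tor}}$ via the proper birational map $\pi: X^{\mathrm{tor}} \to X$: the boundary $D \subset X^{\mathrm{tor}}$ is an snc divisor, and one can use the structure of $D$ together with the behavior of $R\pi_{*}$ to perform the vanishing analysis in a more tractable setting. Alternatively, one can work directly with the explicit local Fourier--Jacobi structure of $X^{BB}$ at each Satake boundary stratum. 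Either route requires delicate bookkeeping of the recursive stratification of $\partial X$ by lower-genus Siegel modular varieties, and it is here that the specific arithmetic geometry of the Siegel variety, rather than general scheme-theoretic depth bounds, will need to be deployed.
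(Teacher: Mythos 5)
There is a genuine gap, and it lies exactly where your proposal waves its hands. Your framework --- pass to a compactification $X$ with boundary $Y$, use the local cohomology sequence relating $H^{d-1}(X\backslash Y,\mathcal F)$ to $H^d_Y(X,\overline{\mathcal F})\to H^d(X,\overline{\mathcal F})$ --- is indeed the paper's starting point (Proposition \ref{ccd=lef}). But your central claim, that the codimension condition $\mathrm{codim}_X(Y)=g\ge 2$ translates into depth-type constraints that ``directly force the required vanishing and injectivity,'' is false, and no amount of resolving singularities or Fourier--Jacobi bookkeeping will repair it. High codimension of $Y$ works \emph{against} you: for $X=\mathbb{P}^2$ and $Y$ a point (codimension $2$), one has $\ccd(X\backslash Y)=1=d-1$, not $d-2$, because $H^2_Y(X,\mathcal{O}(-3))$ is infinite-dimensional while $H^2(X,\mathcal{O}(-3))$ is one-dimensional, so the map $H^2_Y\to H^2$ is nowhere near injective. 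What actually forces the injectivity of $H^d_Y(X,\omega_X(-m))\to H^d(X,\omega_X(-m))$ is, by Serre duality and formal duality, the \emph{surjectivity} of the restriction map $H^0(X,\mathcal{O}(m))\to H^0(X_{/Y},\hat{i}^*\mathcal{O}(m))$, i.e.\ the statement that formal sections along the boundary are algebraic. This is precisely the modularity of formal Siegel modular forms --- the hardest theorem in the circle of ideas you are trying to prove --- and it is established in the paper by a completely different mechanism: a transcendence-degree bound on the graded ring of formal sections (via Bruinier--Raum's dimension estimates), giving the weak G2 property; then an upgrade to weak G3 (Theorem \ref{weak_G3}) using Zariski density of Hecke orbits to kill the ramification locus of the Hartshorne--Gieseker normalization, together with the Bass--Milnor--Serre congruence subgroup property (Proposition \ref{etale_tower}) to identify the resulting \'etale cover. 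The intersection of $Y$ with every effective divisor (van der Geer, Theorem \ref{complete_dim_bound}) handles only the \emph{injectivity} of the restriction on $H^0$, i.e.\ the easy half. None of this appears in your outline, and your ``favorable circumstances (e.g.\ if $X$ were Cohen--Macaulay along $Y$)'' would not supply it even for smooth $X$ and smooth $Y$.

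A secondary but real issue: you cannot run the argument ``for every coherent $\overline{\mathcal F}$'' directly, since $H^{d-1}(X,\overline{\mathcal F})$ is generically nonzero for arbitrary coherent sheaves on a $d$-dimensional projective variety. The paper first reduces (Lemma \ref{ccd_ample}) to checking the vanishing for $\omega_X(-m)$ with $m\gg 0$, where Serre vanishing kills $H^{d-1}(X,\omega_X(-m))$ and Serre duality converts the local cohomology statement into the formal-sections statement above. Your use of Lichtenbaum for the top degree is correct but is also subsumed by that exact sequence. Finally, the ``$-2$'' in the bound does not come from $g\ge2$ via codimension as you assert; it is the intrinsic output of the Lefschetz-condition argument (which is why the paper only conjectures, and does not prove, the sharper bound $d-g$).
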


As a key application, we deduce that formal Siegel modular forms are automatically convergent, i.e., they are Siegel modular forms. This generalizes the earlier work of Bruinier and Raum \cite{bruinier2015kudla} and \cite{bruinier2024formal}.

It is conjectured that $\ccd (A_{g,\Gamma})=d-g$, but very few results are known when $g\ge 5$. In general, we conjecture the following.
\begin{coj}
    Let $(G,X)$ be a Shimura datum that $G^{ad}$ is simple, and $Sh_{K}(G,X)$ be the associated Shimura variety of level $K$. Then 
    $$
        \ccd(Sh_{K}(G,X))\leq \dim(Sh_{K}(G,X))-\mathrm{rank}_\mathbb{Q}G^{ad}.
    $$
\end{coj}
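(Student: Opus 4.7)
The plan is to extend the strategy that yields Theorem 1.1 to an arbitrary Shimura variety $Sh_K(G,X)$ with $G^{ad}$ $\mathbb{Q}$-simple of $\mathbb{Q}$-rank $r$. The target is a covering of $Sh_K(G,X)$ by $d-r+1$ affine open subvarieties, where $d = \dim Sh_K(G,X)$; by the standard Čech argument (an affine cover of length $n+1$ forces coherent cohomology to vanish above degree $n$) this would imply $\ccd(Sh_K(G,X)) \le d-r$. Observe that Theorem 1.1 corresponds to exploiting only ``rank at least $2$'' in the Siegel case, so the conjecture asks for a refinement that uses the full $\mathbb{Q}$-rank.

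First I would fix a smooth projective toroidal compactification $Sh_K^{tor}$ with normal crossings boundary divisor $D$, together with the canonical projection $\pi \colon Sh_K^{tor} \to Sh_K^{BB}$ to the Baily--Borel compactification. The boundary stratification of $Sh_K^{BB}$ is indexed by the rational Tits building of $G$, and under the $\mathbb{Q}$-simplicity hypothesis there exists a maximal chain of proper rational parabolics $P_1 \supsetneq \cdots \supsetneq P_r$ whose associated rational boundary components have codimensions forming a strict chain in $Sh_K^{BB}$. Second, using the ampleness of the Hodge line bundle $\omega$ on $Sh_K^{BB}$ (Baily--Borel), I would construct, after possibly refining $K$, automorphic sections $s_1,\ldots,s_r$ of suitable powers of $\omega$ that vanish along, and concentrate near, disjoint families of maximal cusps associated to the $P_i$. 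Each locus $U_i = Sh_K(G,X) \setminus \{s_i = 0\}$ is then affine in $Sh_K(G,X)$, and an iterated complement-of-ample-divisor argument organizes these into an affine covering of length $d-r+1$, exactly paralleling the rank-$2$ construction underlying Theorem 1.1.

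The main obstacle will be establishing that the $s_i$ can be chosen with genuinely transverse vanishing loci on $Sh_K^{BB}$, so that removing them successively drops the codimension by exactly $1$ at each step and terminates in an affine after $d-r$ steps rather than fewer. This transversality is essentially automatic for two distinct cusps in the Siegel setting (any two cusps meet in a lower-dimensional Shimura subvariety), but for general $(G,X)$ and higher $r$ it requires controlling the full intersection pattern of rational boundary strata in the Tits building, and reconciling this with the fan data of $Sh_K^{tor}$. A secondary difficulty, absent in the classical case, is that rational boundary components of Shimura varieties attached to exceptional groups are classified via the Satake / Moore--Pyatetskii--Shapiro theory and admit no uniform combinatorial description; one would either need a uniform group-theoretic replacement for the toric fan combinatorics available for $\mathrm{Sp}_{2g}$, or to argue case-by-case across the Deligne classification of Shimura data with $G^{ad}$ $\mathbb{Q}$-simple. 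Handling Shimura varieties with large center or non-connected $G$ via a descent argument from the simply connected cover introduces further bookkeeping but should not change the underlying bound.
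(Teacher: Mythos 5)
The statement you are trying to prove is stated in the paper as a \emph{conjecture}: the paper offers no proof of it, and indeed only establishes the Siegel case in the weakened form $\ccd(A_{g,\Gamma})\le d-2$ (Theorem \ref{ccd_bound}), while noting that the full bound $d-g=d-\mathrm{rank}_{\mathbb{Q}}\Sp_{2g}$ is open for $g\ge 5$. So your proposal must stand on its own, and as written it is a research program rather than a proof: the two obstacles you flag yourself (transversality of the vanishing loci, and the boundary combinatorics for exceptional groups) are exactly where all the content lies, and neither is resolved. Moreover, your framing of the known case is inaccurate: Theorem \ref{main_thm} is \emph{not} proved by exhibiting an affine cover of length $d-1$. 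The paper's route is formal-geometric: the boundary is shown to be weakly G2 with respect to $\omega$ via the Bruinier--Raum dimension bound on formal Siegel modular forms (Lemma \ref{tr_deg}), upgraded to weakly G3 via Hecke-orbit density together with the Bass--Milnor--Serre congruence subgroup property (Proposition \ref{etale_tower}, Theorem \ref{weak_G3}), and then converted into the ccd bound through a duality equivalence between $\mathrm{Lef}(X,Y)$ and $\ccd(X\setminus Y)\le d-2$ (Propositions \ref{ccd=lef} and \ref{ccd=g3}), using van der Geer's codimension bound (Theorem \ref{complete_dim_bound}). The rank hypothesis enters only through the congruence subgroup property and the connectedness and positive dimension of the boundary, which is precisely why that method saturates at $d-2$ independently of $g$; it does not ``exploit rank at least $2$'' via any covering, so there is no rank-$2$ construction for your proposal to parallel.

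Beyond the misattribution, the central step of your plan would fail as stated. First, bounding the affine covering number is a strictly stronger task than bounding $\ccd$: a cover by $d-r+1$ affines implies the conjecture, but no converse holds, so you have replaced the conjecture by a harder open problem (such covers are not known even for $A_g$, where the expected size $d-g+1$ matches the expected $\ccd(A_g)=d-g$). Second, for $U_i=Sh_K(G,X)\setminus\{s_i=0\}$ to be affine you need $s_i$ to vanish on the \emph{entire} Baily--Borel boundary (a cusp form), so that $U_i$ is the complement of an ample divisor in the projective $Sh_K^{BB}$; but then the ``iterated complement-of-ample-divisor argument'' is not an argument: to get a cover you must show that the common zero locus of your $d-r+1$ sections meets $Sh_K(G,X)$ in the empty set, whereas intersections of ample divisors on a projective $d$-fold are nonempty at every stage of codimension at most $d$, and every interior component must be forced into the boundary. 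No transversality of the divisors, nor any choice of a maximal chain of parabolics $P_1\supsetneq\cdots\supsetneq P_r$ (which controls the stratification of the boundary, not the position of interior components of divisor intersections), accomplishes this; in fact van der Geer's theorem shows only that complete interior components have large codimension, which cuts in the opposite direction of what you need. Until you can produce sections whose common interior zero locus is empty --- the genuinely open point --- the proposal does not yield the bound.
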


To establish Theorem \ref{main_thm}, we introduce and study the weak G2 and G3 properties with respect to a line bundle, which refine the classical notions of G2 and G3 subvarieties in the sense of Hironaka--Matsumura in \cite{hironaka1968formal}.

\begin{defn}
    Let $\mathcal{Z}$ be a locally Noetherian formal scheme. For any affine open subset $U$ of $\mathcal{Z}$, we define
    $$
        \mathcal{M}^0_\mathcal{Z}(U):=[\mathcal{O_Z}(U)]_0,
    $$
    where $[A]_0$ denotes the total ring of fractions of the ring $A$.
    Let $\mathcal{M_Z}$ be the sheafification of the presheaf $\mathcal{M}^0_{\mathcal{Z}}$, and set
    $$
        K(\mathcal{Z}):=H^0(\mathcal{Z},\mathcal{M_Z}).
    $$
\end{defn}

We are interested in the case $\mathcal{Z}=X_{/Y}$ is the completion of an irreducible variety $X$ along a closed subvariety $Y$. Let $\hat{i}:X_{/Y}\to X$ be the natural flat morphism, it induces a natural ring homomorphism 
$$
    \alpha_{X,Y}:K(X)\to K(X_{/Y}),
$$
where $K(X)$ is the field of rational functions of $X$. For any line bundle $\mathcal{L}$ on $X$ and $\hat{\mathcal{L}}:=\hat{i}^*\mathcal{L}$, let $K(X,\mathcal{L})$ (resp. $K(X_{/Y},\hat{\mathcal{L}})$) denote the subfield of $K(X)$ (resp. $K(X_{/Y})$) consisting of ratios of $\Gamma(X,\mathcal{L}^m)$ (resp. $\Gamma(X_{/Y},\hat{\mathcal{L}}^m)$) for $m\geq0$. Now we define the weak G2 and G3 properties.

\begin{defn}
Let $X$ be an irreducible projective variety, $Y \subset X$ a closed subvariety, and $\mathcal{L}$ a line bundle on $X$. Suppose $K(X_{/Y})$ is a field. Then:
\begin{itemize}
    \item $Y$ is \emph{weakly G2 with respect to $\mathcal{L}$} if $K(X) = K(X, \mathcal{L})$ and the natural map $\alpha_{X,Y}: K(X, \mathcal{L}) \to K(X_{/Y}, \hat{\mathcal{L}})$ is a finite field extension.
    \item $Y$ is \emph{weakly G3 with respect to $\mathcal{L}$} if $K(X) = K(X, \mathcal{L})=K(X_{/Y}, \hat{\mathcal{L}})$.
\end{itemize}
\end{defn}

Let $A^{\min}_{g,\Gamma}$ denote the minimal compactification, and let $A^{\Sigma}_{g,\Gamma}$ denote the toroidal compactification with respect to the cone decomposition $\Sigma$. Let $\omega$ denote the Hodge line bundle on $A_{g,\Gamma}^{\min}$ or its pullback on $A_{g,\Gamma}^{\Sigma}$. We use $D_\Gamma^{\min}$ and $D_{\Gamma}^{\Sigma}$ to denote the corresponding boundary of the compactifications. There is a set-theoretic description of $A_{g,\Gamma}^{\min}$ that
$$
    A_{g,\Gamma}^{\min}=\bigsqcup\limits_{j=0}^{g}\bigsqcup_{t\in I_j}A_{j,\Gamma_t}
$$
where $I_j$ is a finite set. We define the stratification of the boundary as
$$
    D_{\Gamma,l}^{\min}:=\bigsqcup\limits_{j=0}^{l}\bigsqcup_{t\in I_j}A_{j,\Gamma_t},
$$ and $D_{\Gamma,l}^{\Sigma}$ as its preimage in $A_{g,\Gamma}^{\Sigma}$.

Our main geometric result is the following.

\begin{thm} [Corollary \ref{weak_g3_l}]
The boundary $D_{\Gamma,l}^{\mathrm{min}}$ is weakly G3 with respect to the Hodge line bundle $\omega$ in $A_{g,\Gamma}^{\mathrm{min}}$ for $l\ge 1$.
\end{thm}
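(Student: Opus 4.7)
The claim requires verifying both defining properties of ``weakly G3 with respect to $\omega$''.

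The first property, $K(A_{g,\Gamma}^{\min}) = K(A_{g,\Gamma}^{\min}, \omega)$, follows immediately from the Baily--Borel ampleness of $\omega$ on $A_{g,\Gamma}^{\min}$: for $m \gg 0$ the global sections of $\omega^m$ define a closed embedding into projective space, so their ratios generate the full function field of $A_{g,\Gamma}^{\min}$.

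For the second property, $K(A_{g,\Gamma}^{\min}, \omega) = K(\hat{X}_l, \hat{\omega})$ where $\hat{X}_l := (A_{g,\Gamma}^{\min})_{/D_{\Gamma,l}^{\min}}$, I would reduce to the minimal case $l = 1$ via a monotonicity argument in $l$. Since $D_{\Gamma,1}^{\min} \subseteq D_{\Gamma,l}^{\min}$ for every $l \geq 1$, the formal neighborhood of the smaller subset is larger and thus carries more formal sections, giving a natural restriction inclusion
$$
K(\hat{X}_l, \hat{\omega}) \hookrightarrow K(\hat{X}_1, \hat{\omega}).
$$
Combined with the tautological inclusion $K(A_{g,\Gamma}^{\min}, \omega) \hookrightarrow K(\hat{X}_l, \hat{\omega})$ coming from $\alpha_{A_{g,\Gamma}^{\min}, D_{\Gamma,l}^{\min}}$, this sandwiches the middle field between two fields whose equality is exactly the $l = 1$ case, so we may assume $l = 1$.

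The substantive $l = 1$ statement is the main weak G3 input, presumably the theorem whose corollary is being stated. The natural approach is to lift the problem to a toroidal compactification $A_{g,\Gamma}^{\Sigma}$, where $\omega$ pulls back and formal sections of $\hat{\omega}^m$ near $D_{\Gamma,1}^{\Sigma}$ are described by compatible Fourier expansions at the $0$-dimensional cusps together with Fourier--Jacobi expansions along the $1$-dimensional strata. Using Koecher's principle for $g \geq 2$ (which is essential and fails for $g = 1$) together with the positivity of $\omega$, one then algebraizes such formal data into a genuine section of $\omega^m$ on $A_{g,\Gamma}^{\min}$, so ratios of formal sections coincide with ratios of classical Siegel modular forms. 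The main obstacle --- and the real content --- is precisely this algebraization step: showing that formal sections carried by the infinitesimal neighborhood of just the lowest two layers of the boundary must already be classical. This is the point at which weak G3 ceases to be a purely formal identity and becomes a modularity statement, and is what drives the later application to Bruinier--Raum.
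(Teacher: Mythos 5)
There is a genuine gap, and the reduction goes in the wrong direction. Your monotonicity observation is correct as stated: since $D_{\Gamma,1}^{\min}\subseteq D_{\Gamma,l}^{\min}$, restriction gives $K(\hat{X}_l,\hat{\omega})\hookrightarrow K(\hat{X}_1,\hat{\omega})$, and the sandwich argument validly reduces every $l\geq 1$ to $l=1$. But $l=1$ is the \emph{hardest} case, and the theorem of which this statement is a corollary (Theorem \ref{weak_G3}) is the case $l=g-1$, i.e.\ completion along the \emph{entire} boundary --- the case with the fewest formal sections. Your monotonicity only propagates from smaller $l$ to larger $l$, so you cannot cite the main theorem to handle $l=1$; you have reduced the corollary to a statement strictly stronger than anything already proved. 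For that statement you offer only a sketch (``Koecher's principle together with the positivity of $\omega$''), and you yourself concede that the algebraization step is ``the real content.'' That concession is the gap: if Koecher plus positivity sufficed, the Bruinier--Raum modularity results would be immediate. The actual $l=g-1$ case in the paper already requires the weak G2 bound (from dimension estimates on formal Siegel modular forms), the Hartshorne--Gieseker relative normalization, density of Hecke orbits, the congruence subgroup property, and van der Geer's codimension bound --- none of which appears in your outline, and the $l=1$ case cannot be easier.

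The paper's route to general $l$ is entirely different and worth internalizing. It first proves the full-boundary case ($l=g-1$), deduces $\ccd(A_{g,\Gamma})\leq d-2$ (Theorem \ref{ccd_bound}), and then bootstraps: the toroidal boundary strata $F_{\Gamma,j}^{\Sigma}$ for $j>l$ fiber over lower-genus Siegel varieties $A_{j,\Gamma_t}$ with fibers of dimension $d-j(j+1)/2-1$, so the ccd bound applied in genus $j$ gives $\ccd(F_{\Gamma,j}^{\Sigma})\leq d-3$; an excision spectral sequence over the stratification then yields $\ccd(A_{g,\Gamma}^{\Sigma}\setminus D_{\Gamma,l}^{\Sigma})\leq d-2$, and Propositions \ref{ccd=lef} and \ref{lef_to_g3} convert this into $\mathrm{Lef}$ and hence weak G3 along $D_{\Gamma,l}^{\Sigma}$, which descends to the minimal compactification by the theorem on formal functions and to general level by Proposition \ref{weak_G2_galois}. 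In short: the smaller strata are handled not by a direct Fourier--Jacobi algebraization but by a cohomological-dimension induction on genus, which is precisely the machinery your proposal bypasses.
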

We can deduce Theorem \ref{main_thm} from the case $l=g-1$ of the geometric result above, which is proved in Theorem \ref{weak_G3}, and then we can prove for general $l\geq1$ by using Theorem \ref{main_thm}. This result also provides new examples of subvarieties that satisfy the $G3$ sense properties. To our knowledge, there are very few examples except subvarieties in homogeneous spaces. As a corollary, we can show that the pair $(A_{g,\Gamma}^{\mathrm{min}}, D_{\Gamma,l}^{\mathrm{min}})$ (resp. $(A_{g,\Gamma}^{\Sigma}, D_{\Gamma,l}^{\Sigma})$) satisfies the \emph{Grothendieck--Lefschetz condition}.

\begin{defn}
A pair $(X, Y)$ satisfies $\mathrm{Lef}(X, Y)$ if for every connected open neighborhood $U$ of $Y$ and every vector bundle $\mathcal{E}$ on $U$, the restriction
$$
H^0(U, \mathcal{E}) \to H^0(X_{/Y}, \hat{i}^* \mathcal{E})
$$
is an isomorphism, where $\hat{i}:X_{/Y}\to X$ is the natural morphism.
\end{defn}

\begin{thm}[Corollary \ref{lef_l}]
The pair $(A_{g,\Gamma}^{\mathrm{min}}, D_{\Gamma,l}^{\mathrm{min}})$ (resp. $(A_{g,\Gamma}^{\Sigma}, D_{\Gamma,l}^{\Sigma})$) satisfies $\mathrm{Lef}(A_{g,\Gamma}^{\mathrm{min}}, D_{\Gamma,l}^{\mathrm{min}})$ (resp. $\mathrm{Lef}(A_{g,\Gamma}^{\Sigma}, D_{\Gamma,l}^{\Sigma})$) for $l\ge1$. In particular,
$$
H^0(A_{g,\Gamma}^{\mathrm{min}}, \omega^k) \cong H^0(\hat{A}_{g,\Gamma}^{\mathrm{min}}, \hat{\omega}^k),
$$
where $\hat{A}_{g,\Gamma}^{\mathrm{min}}$ is the formal completion along the boundary stratum $D_{\Gamma,l}^{\mathrm{min}}$.
\end{thm}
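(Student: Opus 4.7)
The plan is to derive the Lefschetz condition from the coherent cohomological dimension bound of Theorem \ref{main_thm}, extended from $A_{g,\Gamma}$ to the full complement of $D_{\Gamma,l}^{\mathrm{min}}$, via the classical machinery of local cohomology and formal functions; the toroidal case then follows by transport along the projection, and the ``in particular'' isomorphism is a specialization.

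First I would extend the ccd bound to $U_l := A_{g,\Gamma}^{\mathrm{min}} \setminus D_{\Gamma,l}^{\mathrm{min}}$ for every $l \geq 1$. The case $l = g-1$ gives $U_{g-1} = A_{g,\Gamma}$, which is exactly Theorem \ref{main_thm}. For $1 \leq l < g-1$, $U_l$ is stratified with $A_{g,\Gamma}$ as its dense open stratum and locally closed strata $A_{j, \Gamma_t}$ for $l < j < g$; Theorem \ref{main_thm} applied to each $A_{j,\Gamma_t}$ (a Siegel modular variety of smaller genus) gives $\ccd(A_{j,\Gamma_t}) \leq \tfrac{1}{2}j(j+1) - 2 < d - 2$, and an inductive excision argument on the stratification yields $\ccd(U_l) \leq d - 2$.

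Next, combining this ccd bound with the local cohomology long exact sequence and Grothendieck's formal functions theorem gives $\mathrm{Lef}(A_{g,\Gamma}^{\mathrm{min}}, D_{\Gamma,l}^{\mathrm{min}})$. Concretely, for a vector bundle $\mathcal{E}$ on a neighborhood $U$ of $Y := D_{\Gamma,l}^{\mathrm{min}}$ (extended to a coherent sheaf on $X := A_{g,\Gamma}^{\mathrm{min}}$, e.g.\ after twisting by a high power of $\omega$), the sequence comparing $H^0(X, \mathcal{E})$, $H^0(U_l, \mathcal{E})$, and $H^i_{Y}(X, \mathcal{E})$ together with the identification of local cohomology with formal completion along $Y$ yields the desired isomorphism $H^0(U, \mathcal{E}) \cong H^0(X_{/Y}, \hat{\mathcal{E}})$ once $\ccd(U_l) \leq d - 2$ is invoked. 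This is essentially Hartshorne's criterion from \emph{Ample Subvarieties of Algebraic Varieties}.

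For the toroidal case, the proper birational projection $\pi: A_{g,\Gamma}^{\Sigma} \to A_{g,\Gamma}^{\mathrm{min}}$ satisfies $\pi^{-1}(D_{\Gamma,l}^{\mathrm{min}}) = D_{\Gamma,l}^{\Sigma}$ and $\pi_* \mathcal{O} = \mathcal{O}$; the formal functions theorem then transfers Lef from the minimal to the toroidal compactification by pushing forward vector bundles and comparing formal completions. The particular isomorphism for $\omega^k$ is then immediate from Lef with $U = A_{g,\Gamma}^{\mathrm{min}}$ and $\mathcal{E} = \omega^k$. The main obstacle will be justifying the ccd extension argument, which must carefully track how coherent cohomological dimension behaves under open/closed decompositions with locally closed substrata of varying dimension, and verifying Hartshorne's criterion in the singular setting of $A_{g,\Gamma}^{\mathrm{min}}$, where the usual smoothness hypotheses need to be replaced by normality and Cohen-Macaulay conditions inherited from the Baily--Borel construction.
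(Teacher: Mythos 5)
Your overall strategy --- bound the coherent cohomological dimension of the complement of the stratum and then run the local-cohomology/formal-duality argument --- is the right one in spirit, but two of your steps have genuine gaps, and the paper's proof is structured precisely to avoid them.

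First, the excision argument you propose is carried out in the \emph{minimal} compactification, where the strata $A_{j,\Gamma_t}$ have large codimension $(g-j)(g+j+1)/2$ inside a space that is badly singular along the boundary. Knowing $\ccd(A_{j,\Gamma_t})\leq \frac{1}{2}j(j+1)-2$ does not by itself bound $H^i_{Z}(U,\mathcal{F})$ for a locally closed stratum $Z$: in the spectral sequence $E_2^{r,s}=H^r(Z,\mathcal{H}^s_{Z}(\mathcal{F}))$ one must also control the range of $s$ for which the local cohomology sheaves are nonzero, and in the singular, high-codimension setting of $A_{g,\Gamma}^{\min}$ there is no useful upper bound on $s$. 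This is why the paper runs the excision argument on a \emph{smooth toroidal} compactification, where $D_{\Gamma}^{\Sigma}$ is a Cartier divisor, so that $\mathcal{H}^s_{F^{\Sigma}_{\Gamma,j}}=0$ unless $s=1$; the numerics then close up only because the toroidal strata $F^{\Sigma}_{\Gamma,j}$ fiber over $\bigsqcup_t A_{j,\Gamma_t}$ with fibers of dimension $d-\frac{1}{2}j(j+1)-1$, giving $\ccd(F^{\Sigma}_{\Gamma,j})\leq d-3$ and hence $r+s\leq d-2$. Your count $\ccd(A_{j,\Gamma_t})<d-2$ plays no direct role in that estimate.

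Second, the criterion of Proposition \ref{ccd=lef} (your ``Hartshorne's criterion'') is proved via Serre duality and formal duality on a \emph{smooth} projective variety; it cannot be applied to $A_{g,\Gamma}^{\min}$, and replacing smoothness by ``normality and Cohen--Macaulay conditions inherited from the Baily--Borel construction'' is not something you can take for granted --- the paper never verifies, nor needs, such conditions for the minimal compactification. Instead it converts the toroidal $\ccd$ bound into the weak G3 property of $D^{\Sigma}_{\Gamma',l}$ with respect to $\omega$ (Proposition \ref{lef_to_g3}), descends this to $D^{\min}_{\Gamma',l}$ using $\pi_*\mathcal{O}_{\hat{A}^{\Sigma}_{g,\Gamma'}}=\mathcal{O}_{\hat{A}^{\min}_{g,\Gamma'}}$ and then to general level by the Galois argument (Proposition \ref{weak_G2_galois}), and finally deduces $\mathrm{Lef}$ on both compactifications from Proposition \ref{g3_to_lef}, which requires only Serre's condition $S_2$ plus the fact that every effective divisor meets the stratum. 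Note also that the direction of transport is the opposite of what you propose: the hard analysis happens on the toroidal side and is pushed down to the minimal compactification, not the other way around, since $\mathrm{Lef}$ quantifies over all vector bundles on all neighborhoods and does not transfer naively along the non-isomorphism $\pi$ near the boundary.
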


These results are proved via a detailed analysis of the behavior of formal rational functions under proper morphisms, combined with a study of the cohomological dimension of complements of boundary strata. We also make essential use of a theorem of van der Geer on the minimal codimension of proper subvarieties in $A_g$, which ensures that every effective divisor intersects the boundary nontrivially. We also use the result on the congruence subgroup problem for $\Sp_{2g}(\mathbb{Q})$, which is a classical result of Bass--Milnor--Serre. For Shimura datum $(G,X)$ that $G^{ad}$ is simple and has $\mathbb{Q}$-rank at least 2, it is possible to have similar results, and then the same framework makes sense.

The paper is organized as follows:  
In section 2, we review the classical G2 and G3 properties and their behavior under morphisms.  
In section 3, we introduce the weak G2 and G3 properties relative to a line bundle.  
In section 4, we relate these to cohomological dimension and the Lefschetz condition.  
In section 5, we apply these results to Siegel modular varieties and prove the main theorems.

\vspace{0.3cm}

\noindent\textbf{Acknowledgement.} I would like to thank Liang Xiao for asking me to consider this problem. And I want to thank Jan Hendrik Bruinier, Kai-Wen Lan, Wenxuan Qi, Martin Raum, Yichao Tian, Zhiyu Tian, Peihang Wu, Deding Yang, Zhiwei Yun, Wei Zhang, and Yichao Zhang for helpful discussions.

\section{Properties G2 and G3}
 In this section, we review the notion of closed subvarieties with the G2 and G3 properties. We mainly follow Section 9 of the book \cite{badescu2004projective}.

\begin{defn}
    Let $\mathcal{Z}$ be a locally Noetherian formal scheme. For any affine open subset $U$ of $\mathcal{Z}$, we define
    $$
        \mathcal{M}^0_\mathcal{Z}(U):=[\mathcal{O_Z}(U)]_0,
    $$
    where $[A]_0$ denotes the total ring of fractions of the ring $A$.
    Let $\mathcal{M_Z}$ be the sheafification of the presheaf $\mathcal{M}^0_{\mathcal{Z}}$, and set
    $$
        K(\mathcal{Z}):=H^0(\mathcal{Z},\mathcal{M_Z}).
    $$
    The sheaf $\mathcal{M_Z}$ is called the \emph{sheaf of formal rational functions} and the ring $K(\mathcal{Z})$ is called the \emph{ring of formal rational functions}.
    In particular, if $Y$ is a closed subvariety of an algebraic variety $X$, then $X_{/Y}$ denotes the completion of $X$ along $Y$, and $K(X_{/Y})$ is called the \emph{ring of formal rational functions along $Y$}.
\end{defn}

The ring $K(X_{/Y})$ is not a field in general. However, the following proposition provides a sufficient condition for it to be a field.

\begin{prop}
    Let $X$ be an irreducible projective variety, and let $Y$ be a closed subvariety. If $X$ is normal at every point of $Y$ and $Y$ is connected, then $K(X_{/Y})$ is a field.
\end{prop}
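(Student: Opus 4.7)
The plan is to prove $K(X_{/Y})$ is a field in two steps: first show that every stalk of the sheaf of formal rational functions $\mathcal{M}_{X_{/Y}}$ is a field, and then use connectedness of $Y$ (which is the underlying topological space of $X_{/Y}$) to force every nonzero global section to be a unit.

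For the stalk computation, fix $y \in Y$ with maximal ideal $\mathfrak{m} \subset \mathcal{O}_{X,y}$, and let $I_y$ denote the ideal cutting out $Y$ at $y$. The stalk $\mathcal{O}_{X_{/Y}, y}$ is a Noetherian local ring, obtained by localizing an $I$-adic completion of an affine chart at the maximal ideal corresponding to $y$. Its $\mathfrak{m}$-adic completion is canonically identified with the max-adic completion $\widehat{\mathcal{O}}_{X,y}$, by the standard compatibility of iterated completions along the nested ideals $I_y \subset \mathfrak{m}$. Since $X$ is a variety over a field, $\mathcal{O}_{X,y}$ is excellent, and the hypothesis that $X$ is normal at $y$ gives that $\mathcal{O}_{X,y}$ is a normal local domain; excellence propagates normality to the completion, so $\widehat{\mathcal{O}}_{X,y}$ is a normal local domain. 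The completion map $\mathcal{O}_{X_{/Y}, y} \to \widehat{\mathcal{O}}_{X,y}$ of a Noetherian local ring is faithfully flat, hence injective, so $\mathcal{O}_{X_{/Y}, y}$ embeds into a domain and is itself an integral domain. Consequently $\mathcal{M}_{X_{/Y}, y}$ is a field.

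Now take $0 \neq f \in K(X_{/Y})$ and partition $Y$ into $V_0 = \{y : f_y = 0\}$ and $V_1 = \{y : f_y \neq 0\}$. The set $V_0$ is open as the complement of the support of $f$, and $V_1$ is open because if $f_y$ is a nonzero element of the field $\mathcal{M}_{X_{/Y}, y}$ then it is a unit, and the relation $f \cdot g = 1$ is witnessed on some neighborhood of $y$. Since $Y$ is connected and $V_0 \sqcup V_1 = Y$ with $f \neq 0$ excluding $V_0 = Y$, we conclude $V_1 = Y$. The local inverses, being unique in each stalk field, agree on overlaps and glue to a global $g \in K(X_{/Y})$ with $fg = 1$; thus $K(X_{/Y})$ is a field. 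The main technical point is the identification of the max-adic completion of $\mathcal{O}_{X_{/Y}, y}$ with $\widehat{\mathcal{O}}_{X, y}$ together with the invocation of excellence to deduce normality of the completion; everything else is a routine sheaf-theoretic argument, following the treatment in Section 9 of \cite{badescu2004projective}.
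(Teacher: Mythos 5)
Your argument is correct and is essentially the standard one: the paper does not prove this proposition itself but refers to \cite[Prop.~9.2]{badescu2004projective} and to \cite{hironaka1968formal}, where the proof runs exactly as yours does --- analytic normality of the excellent normal local rings $\mathcal{O}_{X,y}$ together with faithful flatness of completion makes each stalk $\mathcal{O}_{X_{/Y},y}$ a domain, so $\mathcal{M}_{X_{/Y}}$ is a sheaf of fields, and connectedness of $Y$ plus the open--closed dichotomy for the vanishing locus of a section finishes the job. The one step you elide is the identification of the stalk $\mathcal{M}_{X_{/Y},y}$ of the sheafified presheaf of total fraction rings with the fraction field of $\mathcal{O}_{X_{/Y},y}$, which is cleanest if one works with a basis of connected affine opens $U$ on which $\mathcal{O}_{X_{/Y}}(U)$ is itself a domain (so that $[\mathcal{O}_{X_{/Y}}(U)]_0$ is its fraction field and the transition maps are injective); this point is treated in the cited references and is not a genuine gap.
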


This is \cite[Prop. 9.2]{badescu2004projective}. A proof can also be found in the original paper \cite{hironaka1968formal} by Hironaka and Matsumura. Under the assumption of the above proposition, there is a natural ring homomorphism
$$
    \alpha_{X,Y}:K(X)\to K(X_{/Y}).
$$
This leads to the following definition of properties G2 and G3.

\begin{defn}
    Assume that $X$ is an irreducible projective variety and $Y$ is a closed subvariety. 
    \begin{itemize}
        \item We say that $Y$ is G2 if $K(X_{/Y})$ is a field and $\alpha_{X,Y}$ makes it a finite field extension over $K(X)$.
        \item We say that $Y$ is G3 if $\alpha_{X,Y}$ is an isomorphism.
    \end{itemize}
\end{defn}

We give two baby examples of G2 and G3 subvarities.

\begin{ex}
    Let $X$ be the $n$-dimensional projective space $\mathbb{P}^n (n\geq2)$ over $k$ and $Y\cong\mathbb{P}^1$ is defined by $x_2=x_3=\cdots=x_n=0$, where $[x_0,\dots,x_n]$ is the homogeneous coordinate system of $\mathbb{P}^n$. We can compute the ring $K(X_{/Y})$ directly.

    Let $U_0$ and $U_1$ be the open subsets of $\mathbb{P}^n$ defined by $x_0 \neq 0$ and $x_1 \neq 0$. Then $Y_0=U_{0/(U_0\cap Y)}$ and $Y_1=U_{1/(U_1 \cap Y)}$ forms an open affine cover of $X_{/Y}$. By definition, we have
    $$
        K(X_{/Y})=\text{Ker} ([\mathcal{O}_{X_{/Y}}(Y_0)]_0 \oplus [\mathcal{O}_{X_{/Y}}(Y_1)]_0 \rightrightarrows [\mathcal{O}_{X_{/Y}}(Y_{0}\cap Y_1)]_0).
    $$
    The ring $[\mathcal{O}_{X_{/Y}}(Y_0)]_0$ is the total ring of $k[\frac{x_1}{x_0}]\llbracket \frac{x_2}{x_0},\dots ,\frac{x_n}{x_0}\rrbracket$, which consists of the homogeneous degree $0$ formal power series whose degrees of $x_i$ are bounded below except $x_0$. Elements in $[\mathcal{O}_{X_{/Y}}(Y_1)]_0$ have similar descriptions, and the maps are natural inclusions. Therefore, $K(X_{/Y})$ consists of the homogeneous degree $0$ formal power series with bounded degrees for all $x_i$. From this, we see that $K(X_{/Y})$ coincides with $K(X)$ and $Y$ is G3 in $X$.
\end{ex}

\begin{rmk}
    Hironaka and Matsumura proved that every positive-dimensional connected closed subvariety $Y$ of $\mathbb{P}^n (n\geq2)$ over an algebraic closed field $k$ is G3. See \cite[Thm. 9.14]{badescu2004projective} or \cite{hironaka1968formal}.
\end{rmk}

\begin{ex}
    Consider the group action of $G=\mathbb{Z}/(n+1)\mathbb{Z}$ on $X'=\mathbb{P}^n(n\ge3)$ over $\mathbb{C}$. The generator $\sigma$ of $G$ acts as
    $$
        \sigma([x_0,x_1,x_2,\dots,x_n])=[x_0,\zeta x_1,\zeta^2 x_2, \dots,\zeta^n x_n],
    $$
    where $\zeta$ is a primitive root of unit of order $n+1$. Let $X:=X'/G$ be the quotient scheme and $f:X'\to X$ be the canonical quotient morphism. Let $U'$ be the open subset of $X$ where $G$ acts freely and $U=f(U')$. It is clear that $f|_{U'}$ is a finite \'etale morphism of degree $n+1$, thus $f^*:K(X)\to K(X')$ is a finite field extension of degree $n+1$.
    
    Let $L$ be the line in $X'$ defined by 
    $$
        x_0=x_1,\ x_2=x_3,\ x_4=\cdots=x_n=0.
    $$
    and $Y=f(L)$. Then $L$ lies in $U'$ and we have $f|_{L}:L\to Y$ is an isomorphism. By  \cite[Lem. 9.19]{badescu2004projective} or compute directly, we have an isomorphism $\tilde{f}:X'_{/L}\cong X_{/Y}$ induced by $f$.

    Now consider the commutative diagram
    $$
    \begin{tikzcd}
        K(X)  \arrow[r,"\alpha_{X,Y}"] \arrow[d,"f^*"] 
        & K(X_{/Y})\arrow[d,"\tilde{f}^*"]\\
        K(X') \arrow[r,"\alpha_{X',L}"]
        & K(X'_{/L}).
    \end{tikzcd}
    $$
    We have $\alpha_{X',L}$ and $\tilde{f}^*$ are isomorphisms and $f^*$ is a finite field extension of degree $n+1$, thus $\alpha_{X',L}$ is a finite field extension of degree $n+1$ and $Y$ is G2 but not G3 in $X$.
\end{ex}

The following theorem describes the behavior of the ring of formal rational functions along a closed subvariety under proper surjective morphisms. 

\begin{thm}[{\cite[Thm. 9.11]{badescu2004projective}}] \label{formal_function_base_change}
    Let $f: X' \to X$ be a proper surjective morphism of irreducible varieties. Then for every closed subvariety $Y$ of $X$ there is a canonical isomorphism
    \[
    K(X'_{/f^{-1}(Y)}) \cong [K(X') \otimes_{K(X)} K(X_{/Y})]_0,
    \]
    where $[A]_0$ denotes the total ring of fractions of a commutative ring $A$.
\end{thm}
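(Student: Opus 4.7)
The plan is to construct the canonical map explicitly and then verify it is an isomorphism by reducing via Stein factorization to two simpler cases.

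\textbf{The canonical map.} The proper morphism $f$ induces, by completion along $Y$, a morphism of formal schemes $\hat f : X'_{/f^{-1}(Y)} \to X_{/Y}$, and hence a pullback of sheaves of formal rational functions $\hat f^* : K(X_{/Y}) \to K(X'_{/f^{-1}(Y)})$. There is also a canonical ring homomorphism $K(X') \to K(X'_{/f^{-1}(Y)})$, and the two resulting composites $K(X) \to K(X'_{/f^{-1}(Y)})$ coincide by functoriality. The universal property of the tensor product therefore yields a ring homomorphism $K(X') \otimes_{K(X)} K(X_{/Y}) \to K(X'_{/f^{-1}(Y)})$. Since the target is the ring of global sections of a sheaf of total rings of fractions on the irreducible formal scheme $X'_{/f^{-1}(Y)}$, non-zerodivisors on the left are sent to non-zerodivisors on the right, so the map extends to $[K(X') \otimes_{K(X)} K(X_{/Y})]_0$.

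\textbf{Reduction via Stein factorization.} Factor $f = g \circ h$ with $h : X' \to Z$ proper surjective satisfying $h_* \mathcal{O}_{X'} = \mathcal{O}_Z$ and $g : Z \to X$ finite surjective. Since field extensions are flat and iterated total rings of fractions collapse to a single one, associativity of the tensor product reduces the theorem to its validity for $g$ with respect to $Y \subset X$ and for $h$ with respect to $g^{-1}(Y) \subset Z$ separately. The finite case of $g$ is essentially formal: over an affine open $\mathrm{Spec}\, A \subset X$ with $Y$ defined by the ideal $I \subset A$, we have $Z \times_X \mathrm{Spec}\, A = \mathrm{Spec}\, B$ with $B$ finite over $A$; since base change along a finite morphism commutes with $I$-adic completion, $B \otimes_A \hat A_I = \hat B_{IB}$ computes the structure ring of the completion of $Z$ along $g^{-1}(Y)$, and taking total rings of fractions and sheafifying yields the identification for $g$.

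\textbf{The main obstacle.} The remaining case, $h$ with $h_* \mathcal{O}_{X'} = \mathcal{O}_Z$, is where the essential content lies. The formal function theorem applied to $\mathcal{O}_{X'}$ gives $\hat h_* \mathcal{O}_{X'_{/h^{-1}(W)}} = \mathcal{O}_{Z_{/W}}$, which identifies structure sheaves under push-forward but does not directly control rational functions: $K(X')$ is generally a transcendental extension of $K(Z)$ coming from positive-dimensional generic fibers of $h$, and $\hat h$ is not a homeomorphism, so an affine cover of $X'_{/h^{-1}(W)}$ need not descend to a cover of $Z_{/W}$. The approach I would pursue is to restrict to a dense affine open $U \subset Z$ on which $h$ is flat with geometrically integral fibers, use $\hat h_* \mathcal{O} = \mathcal{O}_{Z_{/W}}$ together with generic flatness to write any given formal rational function $\phi$ locally as $a/b$ with $a, b$ pushing down to $\mathcal{O}_{Z_{/W}}$, and then reassemble these data using the generic-fiber function field extension $K(Z) \subset K(X')$ to realize $\phi$ as an element of $[K(X') \otimes_{K(Z)} K(Z_{/W})]_0$.
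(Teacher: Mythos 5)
First, a point of comparison: the paper does not prove this statement at all --- it is quoted verbatim as \cite[Thm.~9.11]{badescu2004projective} (going back to Hironaka--Matsumura), so there is no internal proof to measure your attempt against, only the standard one. Your skeleton matches that standard proof: construct the canonical map, factor $f = g\circ h$ by Stein factorization, and treat the finite morphism $g$ and the connected-fibre morphism $h$ separately. The finite case via compatibility of finite base change with $I$-adic completion is correct, and the observation that flatness of the field extension $K(X')/K(Z)$ lets the two cases be composed (non-zerodivisors stay non-zerodivisors under flat base change) is the right way to justify the reduction.

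The genuine gap is that the case you yourself identify as ``where the essential content lies'' is not proved. ``The approach I would pursue is\dots'' is a plan, not an argument, and the plan as stated fails. When the generic fibre of $h$ is positive-dimensional, a formal rational function $\phi$ on $X'_{/h^{-1}(W)}$ cannot in general be written locally as $a/b$ with $a,b$ sections pushing down to $\mathcal{O}_{Z_{/W}}$: the sections over an affine piece that descend are exactly those captured by $\hat h_*\mathcal{O}_{X'_{/h^{-1}(W)}}=\mathcal{O}_{Z_{/W}}$, and ratios of such sections only reproduce the image of $K(Z_{/W})$, missing the transcendental part of $K(X')$ over $K(Z)$ entirely. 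The subsequent ``reassembly using $K(Z)\subset K(X')$'' is precisely the surjectivity assertion of the theorem, so invoking it at this point is circular. The actual argument requires genuine further input here (in \cite{badescu2004projective} and \cite{hironaka1968formal} this is where most of the work lies, using e.g.\ Chow's lemma, the birational case, and a separate analysis for projective morphisms), none of which appears in the sketch. Injectivity of the canonical map is also never addressed, and it is not formal, since $[K(X')\otimes_{K(X)}K(X_{/Y})]_0$ need not be a field. So the proposal correctly sets up the easy reductions but leaves the theorem itself unproved.
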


An immediate corollary is the following.
\begin{cor}\label{G2_under_morphism}
    Let $f:X'\to X$ be a proper surjective morphism of projective varieties, and $Y$ be a closed subvariety of $X$ such that both $K(X_{/Y})$ and $K(X'_{/f^{-1}(Y)})$ are fields. Assume that $f$ is generically finite and separable. Then $Y$ is G2 (resp. G3) in $X$ if and only if $f^{-1}(Y)$ is G2 (resp. G3) in $X'$. 
\end{cor}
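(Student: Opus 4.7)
The plan is to apply Theorem \ref{formal_function_base_change} and reduce the G2/G3 comparison to a purely algebraic statement about dimensions of tensor products of field extensions. Set $L = K(X)$, $L' = K(X')$, $M = K(X_{/Y})$, and $M' = K(X'_{/f^{-1}(Y)})$. Since $f$ is generically finite and separable between irreducible varieties, $L'/L$ is a finite separable field extension, and by hypothesis both $M$ and $M'$ are fields. Theorem \ref{formal_function_base_change} gives a canonical isomorphism
$$
M' \cong [L' \otimes_L M]_0,
$$
fitting into a commutative diagram in which $\alpha_{X,Y}: L \to M$ is the structural map and $\alpha_{X', f^{-1}(Y)}: L' \to M'$ corresponds to $l' \mapsto l' \otimes 1$.

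Next I would show that $L' \otimes_L M$ is already a field, so that no total-ring-of-fractions gymnastics are needed. Since $L'/L$ is finite separable, $L' \otimes_L M$ is a finite \'etale $M$-algebra, hence splits as a finite product of finite separable field extensions of $M$. The total ring of fractions of such a product is itself, and the assumption that $M'$ is a field forces the product to have a single factor. Thus $M' = L' \otimes_L M$ is a single finite separable extension of $M$, and as an $L'$-vector space it has dimension $\dim_L M$. In particular $[M':L'] = [M:L]$, with the two sides simultaneously finite or infinite and equal when finite.

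From this identification the two equivalences are immediate: $Y$ is G2 (resp.\ G3) iff $[M:L]$ is finite (resp.\ equal to $1$), and by the equality $[M':L'] = [M:L]$ the same holds for $f^{-1}(Y)$ in $X'$. The main technical point to verify carefully is that the canonical isomorphism from Theorem \ref{formal_function_base_change} really intertwines $\alpha_{X,Y}$ with $\alpha_{X',f^{-1}(Y)}$, so that the dimension count translates into the correct property of $\alpha_{X',f^{-1}(Y)}$. This should be built into the canonicity of the isomorphism but has to be checked by tracing through its construction; once that naturality is in place, everything else is elementary linear algebra over fields.
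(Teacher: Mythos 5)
Your proposal is correct and follows essentially the same route as the paper: both invoke Theorem \ref{formal_function_base_change}, use separability of $K(X')/K(X)$ together with the hypothesis that $K(X'_{/f^{-1}(Y)})$ is a field to conclude that $K(X')\otimes_{K(X)}K(X_{/Y})$ is already a field (the paper via the primitive element theorem, you via the splitting of a finite \'etale algebra into a product of fields --- a cosmetic difference), and then finish by the degree identity $[K(X'_{/f^{-1}(Y)}):K(X')]=[K(X_{/Y}):K(X)]$. Your direct computation of this identity as $\dim_{K(X')}(K(X')\otimes_{K(X)}K(X_{/Y}))=\dim_{K(X)}K(X_{/Y})$, valid also for infinite dimensions, is a slightly cleaner packaging of the paper's tower-of-degrees argument.
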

\begin{proof}
    Since $f$ is generically finite and separable, $K(X')$ is a finite separable extension of $K(X)$. By the primitive element theorem, we can write $K(X') \cong K(X)[t]/(g(t))$ for some irreducible separable polynomial $g(t) \in K(X)[t]$. By Theorem \ref{formal_function_base_change}, we have an isomorphism
    $$
        K(X'_{/f^{-1}(Y)}) \cong [K(X') \otimes_{K(X)} K(X_{/Y})]_0\cong[K(X_{/Y})[t]/(g(t))]_0.
    $$
    Under our assumptions, the total ring of fractions of $K(X_{/Y})[t]/(g(t))$ is a field, which implies that $g(t)$ remains irreducible over $K(X_{/Y})$. Therefore, $K(X_{/Y})[t]/(g(t))$ is already a field, and we have
    $$
        K(X'_{/f^{-1}(Y)}) \cong K(X_{/Y})[t]/(g(t)) \cong K(X') \otimes_{K(X)} K(X_{/Y}).
    $$
    Now we can view $K(X')$ and $K(X_{/Y})$ as intermediate fields between $K(X)$ and $K(X'_{/f^{-1}(Y)})$. Moreover, $[K(X'_{/f^{-1}(Y)}):K(X_{/Y})] = \deg(g) = [K(X'):K(X)]$.
    $$
        \begin{tikzcd}
            & K(X'_{/f^{-1}(Y)}) & \\
            K(X') \arrow[ur, hook, "\alpha_{X',f^{-1}(Y)}"]& &K(X_{/Y}) \arrow[ul, hook,"f^*"]  \\
            &K(X) \arrow[ru, hook, "\alpha_{X,Y}"] \arrow[ul, hook, "f^*"] &
        \end{tikzcd}
    $$

    Now we prove the equivalence:

    ($\Rightarrow$) If $Y$ is G2, then $K(X_{/Y})$ is a finite extension of $K(X)$, so $K(X'_{/f^{-1}(Y)})$ is a finite extension of $K(X)$. In particular, $K(X'_{/f^{-1}(Y)})$ is finite over $K(X')$, hence $f^{-1}(Y)$ is G2. If $Y$ is G3, then it is obvious that $f^{-1}(Y)$ is G3.

    ($\Leftarrow$) If $f^{-1}(Y)$ is G2, then $K(X'_{/f^{-1}(Y)})$ is a finite extension of $K(X')$ hence a finite extension of $K(X)$. Thus $K(X_{/Y})$ is a finite extension of $K(X)$ and $Y$ is G2. If $f^{-1}(Y)$ is G3, then we have $[K(X'_{/f^{-1}(Y)}):K(X_{/Y})]=[K(X'):K(X)]=[K(X'_{/f^{-1}(Y)}):K(X)]$ hence $K(X_{/Y})\cong K(X)$ and $Y$ is G3.
\end{proof}

The following theorem of Hartshorne--Gieseker is an important tool in the study of formal rational functions. See [Corollary 9.20] \cite[Cor. 9.20]{badescu2004projective} and \cite[Thm. 4.3]{gieseker1977two}.
\begin{thm}[Hartshorne--Gieseker] \label{G3_section} 
    Let $Y$ be a connected closed subvariety of an irreducible normal projective variety $X$ which is G2 in $X$. Then there exists a finite surjective morphism $f: X' \to X$ of degree $[K(X_{/Y}) : K(X)]$ from an irreducible normal projective variety $X'$ such that the inclusion $Y \subset X$ lifts to an inclusion $i: Y \hookrightarrow X'$, $f$ is \'etale in a neighborhood of $i(Y)$, and $i(Y)$ is G3 in $X'$.
\end{thm}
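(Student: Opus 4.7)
The plan is to construct $X'$ as the normalization of $X$ in the finite field extension $K(X_{/Y})/K(X)$ supplied by the G2 hypothesis. This yields a finite surjective morphism $f : X' \to X$ of degree $[K(X_{/Y}):K(X)]$ from an irreducible normal projective variety $X'$ with $K(X') \cong K(X_{/Y})$. All the work is then to (i) produce the section of $Y$ inside $X'$, (ii) verify étaleness near it, and (iii) check the G3 property.

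To produce the lift $i : Y \hookrightarrow X'$, I would apply Theorem \ref{formal_function_base_change} to $f$, obtaining
$$
K(X'_{/f^{-1}(Y)}) \cong [K(X') \otimes_{K(X)} K(X_{/Y})]_0 \cong [K(X_{/Y}) \otimes_{K(X)} K(X_{/Y})]_0.
$$
Because $\mathrm{char}\,k = 0$, the extension $K(X_{/Y})/K(X)$ is separable, so this tensor product decomposes as a finite product of fields, one factor of which is $K(X_{/Y})$ itself, split off by the multiplication map (the diagonal). Hence $K(X'_{/f^{-1}(Y)})$ is not a field, and the formal scheme $X'_{/f^{-1}(Y)}$ decomposes into connected components. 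The component $Y'$ corresponding to the diagonal factor satisfies $K(X'_{/Y'}) \cong K(X_{/Y})$, and a direct computation shows that $f$ restricts on $Y'$ to an isomorphism onto $Y$, giving the desired inclusion $i$.

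The étaleness of $f$ near $i(Y)$ would then follow from the fact that $\hat{f} : X'_{/Y'} \xrightarrow{\sim} X_{/Y}$ is an isomorphism of formal schemes and $f$ is finite: the induced map on complete local rings at every point of $i(Y)$ is an isomorphism, so $f$ is unramified and flat along $i(Y)$, and since étaleness is open, it persists in a Zariski neighborhood. The G3 property for $i(Y) \subset X'$ is then immediate from the chain
$$
K(X') \cong K(X_{/Y}) \cong K(X'_{/i(Y)}).
$$

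The main obstacle will be step (i): one must justify that the purely ring-theoretic splitting of $K(X'_{/f^{-1}(Y)})$ into a product of fields actually comes from a scheme-theoretic decomposition of the formal completion, i.e., that the orthogonal idempotents lift from the total ring of fractions to the structure sheaf $\mathcal O_{X'_{/f^{-1}(Y)}}$, and that the resulting formal closed subscheme $Y'$ is algebraizable to an honest connected component of $f^{-1}(Y)$. Here the normality of $X'$, together with Zariski's Main Theorem applied to the finite morphism $f$, is exactly what allows one to translate connectedness of the formal fibre into connectedness of actual fibres; once this is in place, the remaining pieces are formal.
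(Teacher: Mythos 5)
The paper does not prove Theorem \ref{G3_section}; it imports it from \cite[Cor.~9.20]{badescu2004projective} and \cite[Thm.~4.3]{gieseker1977two}, recording only (in the remark that follows) that $X'$ is the relative normalization of $X$ in $\spec K(X_{/Y})$. Your construction of $X'$ agrees with that remark, and your outline --- splitting $[K(X_{/Y})\otimes_{K(X)}K(X_{/Y})]_0$ via Theorem \ref{formal_function_base_change}, isolating the diagonal factor, and using normality of the formal completion together with finiteness of $f$ to upgrade the resulting finite ``birational'' map $X'_{/Y'}\to X_{/Y}$ to an isomorphism --- is essentially the standard published argument, with the genuine technical point (that the product-of-fields decomposition is realized by the connected components of $f^{-1}(Y)$, each of which has a field of formal rational functions precisely because $X'$ is normal) correctly identified.
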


There is a refinement stated by Badescu--Schneider.

\begin{thm}[Badescu--Schneider] \label{G3_section_refine} 
    Let $Y$ be a connected closed subvariety of a irreducible normal projective variety $X$ and $\zeta\in K(X_{/Y})$ be an algebraic element over $K(X)$. Then there exists a finite surjective morphism $f: X' \to X$ of degree $\deg(f)=\deg_{K(X)}(\zeta)$ from a irreducible normal projective variety $X'$ such that the inclusion $Y \subset X$ lifts to an inclusion $i: Y \hookrightarrow X'$, $f$ is \'etale in a neighborhood of $i(Y)$, and $K(X)(\zeta)= K(X')$.
\end{thm}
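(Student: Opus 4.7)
The plan is to adapt the Hartshorne--Gieseker construction of Theorem \ref{G3_section} to the intermediate extension $L := K(X)(\zeta) \subset K(X_{/Y})$, in place of the full field $K(X_{/Y})$. First I would take $f : X' \to X$ to be the normalization of $X$ in $L$. Since $L/K(X)$ is finite and separable (we are in characteristic zero) of degree $n = \deg_{K(X)}(\zeta)$, the morphism $f$ is finite surjective of degree $n$ and $X'$ is an irreducible normal projective variety with $K(X') = L = K(X)(\zeta)$. This gives the degree statement and the last clause of the theorem immediately.

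Next I would analyze the formal completion along $f^{-1}(Y)$ using Theorem \ref{formal_function_base_change}, which gives
$$
K(X'_{/f^{-1}(Y)}) \cong [L \otimes_{K(X)} K(X_{/Y})]_0.
$$
Letting $p(t) \in K(X)[t]$ be the minimal polynomial of $\zeta$, so that $L \cong K(X)[t]/(p(t))$, the hypothesis $p(\zeta) = 0$ in $K(X_{/Y})$ yields a factorization $p(t) = (t - \zeta) q(t)$ with $q \in K(X_{/Y})[t]$ coprime to $(t - \zeta)$ by separability. Hence
$$
L \otimes_{K(X)} K(X_{/Y}) \cong K(X_{/Y}) \times \bigl( K(X_{/Y})[t]/(q(t)) \bigr),
$$
already without forming the total ring of fractions. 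In particular $K(X_{/Y})$ appears as a direct summand of $K(X'_{/f^{-1}(Y)})$ via the projection $\zeta \otimes 1 \mapsto \zeta$.

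To promote this algebraic splitting to a scheme-theoretic one, I would lift the coprime decomposition $p = (t - \zeta) q$ to the $I$-adically complete sheaf $\widehat{f_* \mathcal{O}_{X'}}$ along $Y$ (where $I$ is the ideal sheaf of $Y$) via Hensel's lemma, obtaining an honest decomposition of the formal scheme $X' \times_X X_{/Y}$ into two formal subschemes. The $(t - \zeta)$-factor admits a canonical section $X_{/Y} \to X' \times_X X_{/Y}$ sending the generator of $f_* \mathcal{O}_{X'}$ to $\zeta$; its image is the formal completion along a connected component $Y' \subset f^{-1}(Y)$ with $f|_{Y'} : Y' \xrightarrow{\sim} Y$. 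Setting $i : Y \xrightarrow{\sim} Y' \hookrightarrow X'$ gives the required lift. Étaleness of $f$ in a Zariski neighborhood of $i(Y)$ then follows from the Jacobian criterion applied to the Henselian factor, whose generator satisfies a polynomial with derivative $p'(\zeta)$, a unit thanks to separability.

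The main obstacle I expect is precisely the passage from the algebraic idempotent splitting at the fraction-field level to a genuine splitting of an open-and-closed formal subscheme of $X' \times_X X_{/Y}$: one must ensure that the idempotent lives in the completed structure sheaf itself, not merely in its total ring of fractions. Once this Henselian lifting is accomplished, the section, the lift $i$, and the étaleness of $f$ near $i(Y)$ all follow mechanically, and the theorem reduces to bookkeeping.
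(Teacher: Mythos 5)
The paper itself gives no proof of this theorem --- it is quoted as a result of Badescu--Schneider, with only the remark that $X'$ is the relative normalization of $X$ in $K(X)(\zeta)$ --- so your choice of $X'$ agrees with the construction the paper has in mind, and your overall skeleton (normalize in $L=K(X)(\zeta)$, split the generic fibre of $X'_{/f^{-1}(Y)}\to X_{/Y}$ via $p(t)=(t-\zeta)q(t)$, produce a section, deduce the lift $i$ and the \'etaleness) is the standard Hartshorne--Gieseker route and is correct in outline. However, the step you yourself identify as the main obstacle is not closed by the tool you propose. Hensel's lemma for the pair $(\widehat{f_*\mathcal{O}_{X'}},\,I)$ lifts coprime factorizations \emph{from the reduction modulo $I$}, i.e.\ from data over $\mathcal{O}_{f^{-1}(Y)}$; but your factorization $p=(t-\zeta)q$ lives over $K(X_{/Y})$, the total ring of fractions of the completed sheaf, and no mod-$I$ factorization is given to lift --- producing one is essentially equivalent to the decomposition you are trying to establish. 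The mechanism that actually works is integrality plus normality: the idempotent $e$ cutting out the $K(X_{/Y})$-factor of $L\otimes_{K(X)}K(X_{/Y})$ satisfies $e^2=e$, hence is integral over $\widehat{f_*\mathcal{O}_{X'}}$, and this sheaf of rings is integrally closed in its total ring of fractions because $X'$ is normal and excellent (the $I$-adic completion of an excellent normal ring is again normal). This is precisely where the hypothesis that $X$ is normal enters, and it is the content of the lemmas in \cite{gieseker1977two} and \cite[Sec.~9]{badescu2004projective} that your sketch replaces with the word ``Hensel''.

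A second, smaller imprecision: \'etaleness of $f$ near $i(Y)$ does not follow from the Jacobian criterion applied to $p$, because $X'$ is the integral closure of $X$ in $L$ and need not be monogenic of the form $\spec\mathcal{O}_X[t]/(p)$ near $i(Y)$. Once the clopen factor of $X'_{/f^{-1}(Y)}$ isomorphic to $X_{/Y}$ is in hand, the correct deduction is via completions: for $y'\in i(Y)$ the isomorphism of formal schemes forces $\hat{\mathcal{O}}_{X,f(y')}\to\hat{\mathcal{O}}_{X',y'}$ (maximal-adic completions) to be an isomorphism, and a finite morphism that induces isomorphisms on completed local rings along $i(Y)$ is flat and unramified there, hence \'etale on an open neighbourhood. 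With these two repairs --- replacing the Henselian lifting by the normality-of-the-completion argument, and the Jacobian criterion by the comparison of completions --- your proposal becomes a complete proof.
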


\begin{rmk}
    In Theorem \ref{G3_section}, the normal projective variety $X'$ is exactly the (relative) normalization of $X$ in $\spec K(X_{/Y})$ via the natural morphism $\spec K(X_{/Y})\to \spec K(X)\to X$ given by $\alpha_{X,Y}$. Similarily in Theorem \ref{G3_section_refine} $X'$ is the normalization of $X$ in $\spec K(X)(\zeta)$.
\end{rmk}

Combining Theorem \ref{formal_function_base_change} we have the following.

\begin{prop}\label{normalization_base_change}
Let $g: X_1 \to X_2$ be a proper morphism of normal projective varieties, and let $Y_2$ be a G2 closed subvariety of $X_2$ such that its preimage $Y_1 = g^{-1}(Y_2)$ is G2 in $X_1$. 
\begin{enumerate}[label=(\arabic*)]
    \item There exists a unique morphism $g':X'_1\to X'_2$ fitting into the following commutative diagram:
    $$
        \begin{tikzcd}
            \spec K(X_{2/Y_2}) \arrow[r] \arrow[d]&
            \spec K(X_{1/Y_1}) \arrow[d]\\
            X'_2 \arrow[r,"g'"] \arrow[d,"f_2"]&
            X'_1 \arrow[d,"f_1"]\\
            X_2 \arrow[r,"g"]&
            X_1,
        \end{tikzcd}
    $$
    where $f_i:X'_i\to X_i$ is given by the relative normalization of $X_i$ in $\spec K(X_{i/Y_i})$. 
    
    \item Assume that $g$ is generically finite and generically \'etale. Then for any open subscheme $U_2 \subset X_2$ such that the restriction $g|_{U_1}: U_1 \to U_2$ is finite \'etale, where $U_1 = g^{-1}(U_2)$, we have the following Cartesian diagram:
     $$
        \begin{tikzcd}
            f_2^{-1}(U_1) \arrow[r,"g'"] \arrow[d,"f_2"]
            & f_1^{-1}(U_2) \arrow[d,"f_1"] \\
            U_1 \arrow[r,"g"]
            & U_2.
        \end{tikzcd}
     $$
\end{enumerate}

\end{prop}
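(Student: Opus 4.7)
The plan is to construct $g'$ in part (1) via the universal property of relative normalization, and to establish the Cartesian square in part (2) by recognizing both corners as the relative normalization of $U_1$ in a single finite extension of $K(X_1)$.

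For (1), I first observe that properness of $g$ together with $g^{-1}(Y_2)=Y_1$ yields a morphism of formal schemes $\hat g:X_{1/Y_1}\to X_{2/Y_2}$, and pulling back local sections of the sheaves of formal rational functions gives a field homomorphism $\hat g^*:K(X_{2/Y_2})\to K(X_{1/Y_1})$, both sides being fields by the G2 hypotheses on the $Y_i$. Identifying $K(X'_i)=K(X_{i/Y_i})$, the composition $g\circ f_1:X'_1\to X_2$ is a dominant morphism from a normal irreducible scheme whose function field $K(X'_1)$ contains $K(X'_2)$ as an intermediate subfield over $K(X_2)$, so the universal property of the relative normalization of $X_2$ in $K(X'_2)$ produces a unique morphism $g':X'_1\to X'_2$ over $X_2$ lifting $g\circ f_1$, which is the desired map.

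For (2), the key input is an upgrade of Theorem \ref{formal_function_base_change} to an honest tensor-product identity $K(X'_1)\cong K(X_1)\otimes_{K(X_2)}K(X'_2)$. Since $K(X_1)/K(X_2)$ is separable by generic étaleness and $K(X'_1)$ is already a field, the primitive-element argument from the proof of Corollary \ref{G2_under_morphism} applies verbatim, showing that the tensor product is itself a field. Writing $U_2=\spec A$, $U_1=\spec C$, and $f_2^{-1}(U_2)=\spec B$, the fiber product $U_1\times_{U_2}f_2^{-1}(U_2)=\spec(B\otimes_A C)$ is finite over $U_1$ and étale over $B$; since $B$ is normal, so is $B\otimes_A C$, and its total ring of fractions is $K(X'_2)\otimes_{K(X_2)}K(X_1)\cong K(X'_1)$, a field, so $B\otimes_A C$ is an integral domain. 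Therefore $U_1\times_{U_2}f_2^{-1}(U_2)$ is a normal integral scheme finite over $U_1$ with function field $K(X'_1)$, and by the universal property of relative normalization it is canonically isomorphic to $f_1^{-1}(U_1)$; compatibility with $g'$ follows because both composites $f_1^{-1}(U_1)\rightrightarrows X'_2$ agree generically and $X'_2$ is separated over $X_2$.

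The principal technical point is the tensor-product upgrade in (2): generic étaleness is essential here, since without it one has only $K(X'_1)\cong[K(X_1)\otimes_{K(X_2)}K(X'_2)]_0$, which is not strong enough to identify $B\otimes_A C$ with the coordinate ring of the normalization. Once this upgrade is in hand, both parts reduce to uniqueness of normalization, and the construction of $\hat g^*$ in (1) is a routine verification using the description of $\mathcal M_{\mathcal Z}$ as the sheafification of the presheaf of total rings of fractions.
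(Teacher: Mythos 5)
Your proof is correct and takes essentially the same route as the paper: part (1) by feeding the induced inclusion of formal function fields $K(X_{2/Y_2})\hookrightarrow K(X_{1/Y_1})$ into the universal property of relative normalization, and part (2) by upgrading the base-change isomorphism for formal rational functions to an honest tensor-product identity and then using that integral closure commutes with \'etale base change. The only differences are cosmetic: you verify the \'etale base-change step by hand (normality of $B\otimes_A C$ via \'etaleness, integrality over $C$, and the fraction-field computation) where the paper cites the Stacks project, and you justify that the tensor product is a field via separability and the primitive element theorem where the paper uses that both factors are finite extensions of the common base field.
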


\begin{proof}
    The first part follows from the universal property of relative normalization (See \cite[\href{https://stacks.math.columbia.edu/tag/0BAK}{Tag 0BAK}]{stacks-project}), since we have the following commutative diagram of ring maps by Theorem \ref{formal_function_base_change}
    $$
        \begin{tikzcd}
            K(X_{2/Y_2})
            &K(X_{1/Y_1})\otimes_{K(X_1)}K(X_2) \arrow[l]
            &K(X_{1/Y_1}) \arrow[l]\\
            &K(X_2) \arrow[u] \arrow[ul]
            &K(X_1) \arrow[l] \arrow[u].
        \end{tikzcd}
    $$
    For the second part, both $K(X_{1/Y_1})$ and $K(X_2)$ are finite extensions of $K(X_1)$, thus we have 
    $$ 
        K(X_{2/Y_2})=K(X_{1/Y_1})\otimes_{K(X_1)}K(X_2).
    $$
    By \cite[\href{https://stacks.math.columbia.edu/tag/03GE}{Tag 03GE}]{stacks-project}, taking the integral closure commutes with \'etale base change, so the considered diagram is Cartesian.
\end{proof}

\section{Weak G2 and G3 properies with respect to a line bundle}

In this section, we define weak versions of G2 and G3 properties to focus on the behavior of line bundles.

\begin{defn}
    Let $X$ be an irreducible projective variety and $\mathcal{L}$ a line bundle on $X$. Fix an embedding $\mathcal{L}\hookrightarrow K_X$ where $K_X$ is the constant sheaf of rational functions. We define 
    $$
        K(X,\mathcal L)=\left\{ \frac{s_0}{s_1}\in K(X) \middle|\ s_0,s_1\in H^0(X,\mathcal{L}^m) \textrm{ for some } m\geq 0 \right\}.
    $$
    It is obvious that $K(X,\mathcal{L})$ is independent of the choice of the embedding $\mathcal{L}\hookrightarrow K_X$.
\end{defn}

\begin{prop}
    Let $X$ be an irreducible projective variety and $\mathcal{L}$ a line bundle on $X$. 
    \begin{enumerate}[label=(\arabic*)]
        \item For any injective morphism $\mathcal{L}\hookrightarrow \mathcal{L'}$, we have $K(X,\mathcal{L})\subset K(X,\mathcal{L}')$.
        \item For any integer $n\geq0$, we have $K(X,\mathcal{L})= K(X,\mathcal{L}^n)$.
        \item For any ample line bundle $\mathcal{L}$, we have $K(X,\mathcal{L})=K(X)$.
    \end{enumerate}
\end{prop}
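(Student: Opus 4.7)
Part (1) is essentially formal. An injection $\mathcal{L} \hookrightarrow \mathcal{L}'$ induces injections $\mathcal{L}^m \hookrightarrow (\mathcal{L}')^m$ for every $m$, and hence $H^0(X, \mathcal{L}^m) \hookrightarrow H^0(X, (\mathcal{L}')^m)$. Choosing compatible embeddings $\mathcal{L} \hookrightarrow \mathcal{L}' \hookrightarrow K_X$ (which is legitimate by the stated independence of $K(X, -)$ on the embedding), I would just observe that any ratio of sections of $\mathcal{L}^m$ becomes a ratio of sections of $(\mathcal{L}')^m$, giving the desired containment.

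Part (2), interpreted for $n \geq 1$, splits into two inclusions. The containment $K(X, \mathcal{L}^n) \subseteq K(X, \mathcal{L})$ is immediate since $(\mathcal{L}^n)^m = \mathcal{L}^{nm}$, so any ratio of sections of $(\mathcal{L}^n)^m$ is already a ratio of sections of some power of $\mathcal{L}$. For the converse, the plan is the standard trick: given $s_0/s_1$ with $s_0, s_1 \in H^0(X, \mathcal{L}^m)$ and $s_1 \neq 0$, multiply numerator and denominator by $s_1^{n-1}$ to get $s_0/s_1 = (s_0 s_1^{n-1})/s_1^n$, where now both lie in $H^0(X, \mathcal{L}^{mn}) = H^0(X, (\mathcal{L}^n)^m)$.

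Part (3) is the substantive statement, and I would prove it via the coherent sheaf of denominators. Given $f \in K(X)$, introduce the subsheaf $\mathcal{I}_f \subset \mathcal{O}_X$ defined by $\mathcal{I}_f(U) = \{g \in \mathcal{O}_X(U) : fg \in \mathcal{O}_X(U)\}$. This is a coherent ideal sheaf (locally finitely generated, since its stalks are ideals in Noetherian local rings) and it is nonzero because writing $f = a/b$ locally exhibits $b \in \mathcal{I}_{f,x}$. Ampleness of $\mathcal{L}$ then yields, for $m \gg 0$, global generation of $\mathcal{I}_f \otimes \mathcal{L}^m$, and in particular a nonzero $s_1 \in H^0(X, \mathcal{I}_f \otimes \mathcal{L}^m) \subseteq H^0(X, \mathcal{L}^m)$. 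By construction of $\mathcal{I}_f$, multiplication by $f$ sends $s_1$ to a genuine section $fs_1 \in H^0(X, \mathcal{L}^m)$, so $f = (fs_1)/s_1 \in K(X, \mathcal{L})$. I expect the main obstacle to be the setup of the denominator ideal — specifically verifying coherence and nonvanishing of $\mathcal{I}_f$ — after which the ampleness twist argument closes the proof. An alternative route would be to reduce via part (2) to $\mathcal{L}$ very ample, embed $X \hookrightarrow \mathbb{P}^N$ through $|\mathcal{L}|$, and write $f$ as the restriction of a ratio of homogeneous forms of the same degree, but the coherent-sheaf approach feels cleaner and avoids choosing an embedding.
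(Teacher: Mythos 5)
Your parts (1) and (2) are exactly the paper's argument: part (1) is dismissed as formal, and part (2) uses the same rewriting $s_0/s_1 = (s_0 s_1^{n-1})/s_1^{n}$ with $s_0 s_1^{n-1}, s_1^{n} \in H^0(X,\mathcal{L}^{nm})$; you are also right that the statement only makes sense for $n\geq 1$ (for $n=0$ one would get $K(X,\mathcal{O}_X)$, which is just $k$ in general). For part (3) you take a genuinely different route. The paper reduces via (2) to the case where $\mathcal{L}$ is very ample, embeds $X\hookrightarrow \mathbb{P}^N$ with coordinates $s_0,\dots,s_N\in H^0(X,\mathcal{L})$, and quotes the standard fact that $K(X)$ is generated by the ratios $s_i/s_j$ — precisely the alternative you mention and set aside. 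Your denominator-ideal argument is correct and self-contained: for $f\in K(X)$ the sheaf $\mathcal{I}_f$ is a nonzero quasi-coherent subsheaf of $\mathcal{O}_X$ (on an affine $\spec A$ it is the sheaf associated to the ideal $\{g\in A : fg\in A\}$, and this is compatible with localization), hence coherent on the Noetherian scheme $X$; ampleness then gives a nonzero $s_1\in H^0(X,\mathcal{I}_f\otimes\mathcal{L}^m)\subset H^0(X,\mathcal{L}^m)$ with $fs_1\in H^0(X,\mathcal{L}^m)$, so $f=(fs_1)/s_1$. One small quibble: your stated justification of coherence (``stalks are ideals in Noetherian local rings'') is not by itself a proof — you need the quasi-coherence check above first — but this is easily repaired. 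The trade-off is that the paper's proof is shorter because it leans on the description of the function field of a projectively embedded variety, while yours avoids choosing an embedding and shows directly that each $f\in K(X)$ is a ratio of sections of a single power $\mathcal{L}^m$ (with $m$ depending on $f$). Both proofs are valid.
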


\begin{proof}
    The part $(1)$ is trivial. For $(2)$, it is trivial that $K(X,\mathcal{L}^n)\subset K(X,\mathcal{L})$. Conversely, if $s_0/s_1\in K(\mathcal{L})$ with $s_0,s_1\in H^0(X,\mathcal{L}^m)$, then $s_0s_1^{n-1},s_1^{n}\in H^0(X,\mathcal{L}^{nm})$ and therefore $s_0/s_1=s_0s_1^{n-1}/s_1^n$ lies in $K(X,\mathcal{L}^n)$.
    
    For $(3)$, using $(2)$, we may assume that $\mathcal{L}$ is very ample. This gives a closed immersion $j:X\hookrightarrow \mathbb{P}^N$ over $k$ such that $j^*\mathcal{O}_{\mathbb{P^N}}(1)=\mathcal{L}$, and the coordinates are given by $s_0,\dots,s_N\in H^0(X,\mathcal{L})$. Then $K(X)$ can be generated by ratios of $s_0,\dots,s_N$ and therefore is equal to $K(X,\mathcal{L})$. 
\end{proof}

\begin{defn}
    Let $X$ be an irreducible projective variety and $\mathcal{L}$ a line bundle on $X$. Let $Y$ be a closed subvariety such that $K(X_{/Y})$ is a field, and let $\hat{i}:X_{/Y}\to X$ be the natural flat morphism. Fix an embedding $\mathcal{L}\hookrightarrow K_X$, it induces an embedding $\hat{\mathcal{L}}:=\hat{i}^* \mathcal{L}\hookrightarrow \mathcal{M}_{X_{/Y}}$. Now we define 
    $$
        K(X_{/Y},\hat{\mathcal{L}})=\left\{ \frac{s_0}{s_1}\in K(X_{/Y}) \middle|\ s_0,s_1\in H^0(X_{/Y},\hat{\mathcal{L}}^m) \textrm{ for some } m\geq 0 \right\}.
    $$ It is obvious that $K(X_{/Y},\hat{\mathcal{L}})$ is also independent of the choice of embedding. Since $\hat{i}:X_{/Y}\to X$ is flat, we have $\alpha_{X,Y}:K(X)\to K(X_{/Y})$ sending $K(X,\mathcal{L})$ into $K(X_{/Y},\hat{\mathcal{L}})$.
\end{defn}

\begin{defn}
    Assume that $X$ is an irreducible projective variety and $Y$ is a closed subvariety. Let $\mathcal{L}$ be a line bundle on $X$. 
    \begin{enumerate}
        \item We say that $Y$ is \emph{weakly G2 with respect to $\mathcal{L}$} if $K(X_{/Y})$ is a field, $K(X)=K(X,\mathcal{L})$ and $\alpha_{X,Y}$ makes $K(X_{/Y},\mathcal{L})$ a finite field extension over $K(X,\mathcal{L})$.
        \item We say that $Y$ is \emph{weakly G3 with respect to $\mathcal{L}$} if $K(X_{/Y})$ is a field and $K(X)=K(X,\mathcal{L})=K(X_{/Y},\mathcal{L})$ via $\alpha_{X,Y}$.
    \end{enumerate}
\end{defn}

The following propositions are weak versions of Corollary \ref{G2_under_morphism}.
\begin{prop} \label{weak_G2_galois}
    Let $G$ be a finite group that acts faithfully on an irreducible projective variety $X'$ and 
    $$f:X'\longrightarrow X=X'/G$$ 
    be the natural quotient morphism. Let $Y$ be a closed subvariety of $X$ such that both $K(X_{/Y})$ and $K(X'_{/f^{-1}(Y)})$ are fields.  Let $\mathcal{L}$ be a line bundle on $X$ and $\mathcal{L'}=f^*\mathcal{L}$, such that $K(X,\mathcal{L})=K(X)$ and $K(X',\mathcal{L'})=K(X')$. Then $Y$ is weakly G2 (resp. G3) in $X$ with respect to $\mathcal{L}$ if and only if $f^{-1}(Y)$ is G2 (resp. G3) in $X'$ with respect to $\mathcal{L}'$. 
\end{prop}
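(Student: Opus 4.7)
The plan is to adapt the strategy of Corollary \ref{G2_under_morphism}, but use Galois descent rather than a primitive element presentation. Since $G$ acts faithfully on the irreducible variety $X'$ and the ground field has characteristic zero, the quotient morphism $f$ is finite and generically \'etale, and $K(X')/K(X)$ is a Galois extension with Galois group $G$. Theorem \ref{formal_function_base_change} then yields $K(X'_{/f^{-1}(Y)}) \cong [K(X') \otimes_{K(X)} K(X_{/Y})]_0$, and the hypothesis that both $K(X_{/Y})$ and $K(X'_{/f^{-1}(Y)})$ are fields forces this total ring of fractions to already be a field, so $K(X'_{/f^{-1}(Y)})/K(X_{/Y})$ is itself Galois with group $G$. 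Since $\mathcal{L}' = f^*\mathcal{L}$ carries a natural $G$-equivariant structure, this action restricts to an action on the subring $K(X'_{/f^{-1}(Y)}, \hat{\mathcal{L}}')$ fixing the subring $K(X_{/Y}, \hat{\mathcal{L}})$ pointwise.

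The central step I would establish is the identification $K(X'_{/f^{-1}(Y)}, \hat{\mathcal{L}}')^G = K(X_{/Y}, \hat{\mathcal{L}})$. One inclusion is immediate from equivariance. For the other, given a $G$-invariant ratio $\sigma/\tau$ with $\sigma, \tau \in H^0(X'_{/f^{-1}(Y)}, \hat{\mathcal{L}}'^m)$, I would rewrite it as $\sigma\prod_{g\neq 1} g(\tau) \big/ \prod_{g} g(\tau)$, whose numerator and denominator are now $G$-invariant sections of $\hat{\mathcal{L}}'^{|G|m}$. These invariants are precisely pullbacks of sections of $\hat{\mathcal{L}}^{|G|m}$ on $X_{/Y}$: by the projection formula $f_*(\hat{\mathcal{L}}'^{|G|m}) = \hat{\mathcal{L}}^{|G|m} \otimes f_*\mathcal{O}_{X'_{/f^{-1}(Y)}}$, and taking $G$-invariants yields $\hat{\mathcal{L}}^{|G|m}$, using finiteness of $f$ (so $f_*$ is exact and compatible with completion) and characteristic zero (so $G$-invariants are exact). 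Next, I would observe that the $G$-action on $K(X'_{/f^{-1}(Y)}, \hat{\mathcal{L}}')$ is faithful, because it restricts to the faithful $G$-action on the subfield $K(X') = K(X', \mathcal{L}')$. Artin's theorem then gives $[K(X'_{/f^{-1}(Y)}, \hat{\mathcal{L}}') : K(X_{/Y}, \hat{\mathcal{L}})] = |G| = [K(X') : K(X)]$.

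This produces a square of field extensions with $K(X_{/Y}, \hat{\mathcal{L}})$ and $K(X')$ both sitting between $K(X)$ and $K(X'_{/f^{-1}(Y)}, \hat{\mathcal{L}}')$, in which opposite sides have equal degree. The two equivalences then follow by multiplicativity of degrees exactly as in the proof of Corollary \ref{G2_under_morphism}: for weak G2, finiteness of $K(X_{/Y}, \hat{\mathcal{L}})/K(X)$ and of $K(X'_{/f^{-1}(Y)}, \hat{\mathcal{L}}')/K(X')$ are each equivalent to finiteness of $K(X'_{/f^{-1}(Y)}, \hat{\mathcal{L}}')/K(X)$; for weak G3, the equality $K(X_{/Y}, \hat{\mathcal{L}}) = K(X)$ is equivalent, via the degree identity, to $K(X'_{/f^{-1}(Y)}, \hat{\mathcal{L}}') = K(X')$. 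The main obstacle will be the Galois-invariant identification in the second paragraph: one must be careful that averaging over $G$ on sections really commutes with formal completion, which is where the finiteness of $f$ and the characteristic-zero hypothesis are both essential.
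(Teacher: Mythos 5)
Your argument follows the paper's proof essentially verbatim: both identify $K(X'_{/f^{-1}(Y)},\hat{\mathcal{L}}')^G$ with $K(X_{/Y},\hat{\mathcal{L}})$, use faithfulness of the $G$-action to conclude that the extension is Galois of degree $|G|$, and finish with the same degree count in the square of field extensions as in Corollary \ref{G2_under_morphism}. The only difference is one of detail: you make explicit the averaging trick $\sigma/\tau=\sigma\prod_{g\neq 1}g(\tau)\big/\prod_{g}g(\tau)$ and the projection-formula computation of $G$-invariant sections, steps which the paper asserts without elaboration.
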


\begin{proof}
    It is obvious that $G$ is the Galois group of $K(X')$ over $K(X)$. The action of $G$ on $X'$ induces a natural group action on $H^0(X'_{/f^{-1}(Y)}, \hat{\mathcal{L}'})$ and we have
    \begin{align*}
          H^0(X'_{/f^{-1}(Y)}, \hat{\mathcal{L}'})^{G}=H^0(X_{/Y},\hat{\mathcal{L}}).
    \end{align*}

    Therefore, we have $K(X'_{/f^{-1}(Y)},\hat{\mathcal{L}'})^G=K(X_{/Y},\hat{\mathcal{L}})$. As $G$ acts faithfully on $K(X')$, it also acts faithfully on $K(X'_{/f^{-1}(Y)},\hat{\mathcal{L}'})$. So $K(X'_{/f^{-1}(Y)},\hat{\mathcal{L}'})$ is a Galois extension of degree $|G|$ over $K(X_{/Y},\hat{\mathcal{L}})$.
    Similarly to the proof of Corollary \ref{G2_under_morphism}, we have $K(X'_{/f^{-1}(Y)})) \cong K(X') \otimes_{K(X)} K(X_{/Y})$ and thus
    $$
        K(X') \otimes_{K(X)} K(X_{/Y},\hat{\mathcal{L}})\subset K(X'_{/f^{-1}(Y)},\hat{\mathcal{L}'}).
    $$
    As both sides are degree $|G|$ extensions over $K(X_{/Y},\hat{\mathcal{L}})$ they are equal. The rest are the same as in Corollary \ref{G2_under_morphism}. 
\end{proof}

\begin{prop} \label{weak_G2_birational}
    Let $f:X'\to X$ be a proper birational morphism of projective varieties, and $Y$ be a closed subvariety of $X$ such that both $K(X_{/Y})$ and $K(X'_{/f^{-1}(Y)})$ are fields. Assume further that $X$ is normal. Let $\mathcal{L}$ be a line bundle on $X$ and $\mathcal{L}'=f^*\mathcal{L}$. Then $Y$ is weakly G2 (resp. G3) in $X$ with respect to $\mathcal{L}$ if and only if $f^{-1}(Y)$ is G2 (resp. G3) in $X'$ with respect to $\mathcal{L}'$.  
\end{prop}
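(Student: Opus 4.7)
The plan is to reduce everything to the classical identity $f_*\mathcal{O}_{X'}=\mathcal{O}_X$, which holds because $f$ is proper birational and $X$ is normal (use Stein factorization: the finite factor is birational onto the normal $X$, hence an isomorphism, so $f_*\mathcal{O}_{X'}=\mathcal{O}_X$). Once this is in hand, the four rings $K(X,\mathcal L)$, $K(X',\mathcal L')$, $K(X_{/Y},\hat{\mathcal L})$ and $K(X'_{/f^{-1}(Y)},\hat{\mathcal L}')$ will be identified pairwise, and the equivalence becomes tautological.

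For the algebraic side, birationality of $f$ gives $f^*\colon K(X)\xrightarrow{\sim}K(X')$, and the projection formula $f_*(f^*\mathcal L^m)=\mathcal L^m\otimes f_*\mathcal O_{X'}=\mathcal L^m$ yields $H^0(X',\mathcal L'^m)=H^0(X,\mathcal L^m)$ for all $m\geq0$. Consequently $K(X',\mathcal L')=K(X,\mathcal L)$ as subfields of the common function field; in particular the condition $K(X)=K(X,\mathcal L)$ is equivalent to $K(X')=K(X',\mathcal L')$.

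For the formal side, Theorem \ref{formal_function_base_change} together with $K(X')=K(X)$ gives
$$
K(X'_{/f^{-1}(Y)})\;\cong\;\bigl[K(X')\otimes_{K(X)}K(X_{/Y})\bigr]_0\;\cong\;K(X_{/Y}),
$$
so the ambient ``total'' fields coincide. Next, applying the theorem on formal functions (EGA III) to the coherent sheaf $\mathcal L'^m=f^*\mathcal L^m$ on the proper morphism $f$, and combining with the projection formula on $X$, I obtain
$$
\hat f_*\hat{\mathcal L}'^m\;=\;(f_*\mathcal L'^m)^{\wedge}\;=\;(\mathcal L^m\otimes f_*\mathcal O_{X'})^{\wedge}\;=\;\hat{\mathcal L}^m,
$$
where $\hat f\colon X'_{/f^{-1}(Y)}\to X_{/Y}$ is the induced morphism of formal schemes. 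Taking global sections gives $H^0(X'_{/f^{-1}(Y)},\hat{\mathcal L}'^m)=H^0(X_{/Y},\hat{\mathcal L}^m)$ for all $m\geq 0$, and hence $K(X'_{/f^{-1}(Y)},\hat{\mathcal L}')=K(X_{/Y},\hat{\mathcal L})$ as subfields of the identified formal field.

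Finally, these identifications are compatible with $\alpha_{X,Y}$ and $\alpha_{X',f^{-1}(Y)}$ via the obvious commutative square whose vertical arrows $f^*$ and its formal analogue are isomorphisms. Therefore the statement that $\alpha_{X,Y}$ makes $K(X_{/Y},\hat{\mathcal L})$ a finite (resp. trivial) field extension over $K(X,\mathcal L)$ is literally the same as the corresponding statement for $\alpha_{X',f^{-1}(Y)}$, proving the equivalence in both the weak G2 and weak G3 cases. The main delicate point is the formal-scheme step $\hat f_*\hat{\mathcal L}'^m=\hat{\mathcal L}^m$, which relies simultaneously on the theorem on formal functions and on $f_*\mathcal O_{X'}=\mathcal O_X$; once those are invoked, everything else is bookkeeping.
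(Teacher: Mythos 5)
Your proof is correct and follows essentially the same route as the paper: both reduce to $f_*\mathcal{O}_{X'}=\mathcal{O}_X$ (Zariski's main theorem for the proper birational $f$ onto the normal $X$) and then deduce $H^0(X'_{/f^{-1}(Y)},\hat{\mathcal{L}}'^{m})=H^0(X_{/Y},\hat{\mathcal{L}}^{m})$ via the projection formula. The only cosmetic difference is that you transfer $f_*\mathcal{O}_{X'}=\mathcal{O}_X$ to the formal completion using the theorem on formal functions, whereas the paper invokes flat base change along $X_{/Y}\to X$; your write-up is, if anything, more explicit about the identification $K(X'_{/f^{-1}(Y)})\cong K(X_{/Y})$ and the compatibility of the comparison maps.
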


\begin{proof}
    It is sufficient to show that $H^0(X'_{/f^{-1}(Y)},\hat{\mathcal{L}}'^{\otimes n})=H^0(X_{/Y},\hat{\mathcal{L}}^{\otimes n})$. We show that $f_*\mathcal{O}_{X_{/f^{-1}(Y)}'}=\mathcal{O}_{X_{/Y}}$ then the above equality holds by the projection formula. By Zariski's main theorem we have $f_*\mathcal{O}_{X'}=\mathcal{O}_X$, thus we have $f_*\mathcal{O}_{X_{/f^{-1}(Y)}'}=\mathcal{O}_{X_{/Y}}$ by the flatness of $X_{/Y}\to X$.
\end{proof}

We also have a weak version of Proposition \ref{normalization_base_change}.

\begin{prop} \label{weak_G2_base change}
Let $g: X_2 \to X_1$ be a proper morphism of normal projective varieties, and let $\mathcal{L}_1$ be a line bundle on $X_1$ and $\mathcal{L}_2=g^*\mathcal{L}_1$. Let $Y_1$ be a closed subvariety in $X_1$ and $Y_2=f^{-1}(Y_1)$. Assume that $Y_1$ is weakly G2 with respect to $\mathcal{L}_1$ and $Y_2$ is weakly G2 with respect to $\mathcal{L}_2$ in $X_2$. 
    \begin{enumerate}[label=(\arabic*)]     
    \item There exists a unique morphism $g':X'_1\to X'_2$ fitting into the following commutative diagram:     
    $$         
        \begin{tikzcd}             
            \spec K(X_{2/Y_2},\hat{\mathcal{L}}_2) \arrow[r] \arrow[d]&             
            \spec K(X_{1/Y_1},\hat{\mathcal{L}}_1) \arrow[d]\\             
            X'_2 \arrow[r,"g'"] \arrow[d,"f_2"]&             
            X'_1 \arrow[d,"f_1"]\\             
            X_2 \arrow[r,"g"]&             
            X_1       ,  
        \end{tikzcd}     
    $$     
    
    where $f_i:X'_i\to X_i$ is given by repeatedly using of Theorem \ref{G3_section_refine}.
    
    \item Assume that there is a finite group $G$ acting faithfully on $X_2$ and $g:X_2\to X_1=X_2/{G}$ is the natural morphism onto the quotion scheme. Then for any open subscheme $U_2 \subset X_2$ such that the restriction $g|_{U_1}: U_1 \to U_2$ is finite \'etale (hence Galois), where $U_1 = g^{-1}(U_2)$, we have the following Cartesian diagram.      
    $$         
        \begin{tikzcd}             
        f_2^{-1}(U_1) \arrow[r,"g'"] \arrow[d,"f_2"]
        & f_1^{-1}(U_2) \arrow[d,"f_1"] \\             
        U_1 \arrow[r,"g"]             
        & U_2       .  
        \end{tikzcd}      
    $$ 
    \end{enumerate} 
\end{prop}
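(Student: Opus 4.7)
The plan for (1) is to build $g'$ by the universal property of the relative normalization, after first pinning down the function fields. By the weak G2 hypothesis, each extension $K(X_{i/Y_i},\hat{\mathcal{L}}_i)/K(X_i)$ is finite, so in characteristic zero a primitive element exists and (iterated) application of Theorem \ref{G3_section_refine} along a set of generators yields $f_i:X'_i\to X_i$, a finite surjective morphism from a normal projective variety with $K(X'_i)=K(X_{i/Y_i},\hat{\mathcal{L}}_i)$, étale in a neighborhood of the lift of $Y_i$. The morphism $g$, together with the identification $\mathcal{L}_2=g^*\mathcal{L}_1$ applied to formal sections of $\hat{\mathcal{L}}_i^{\otimes n}$, produces via Theorem \ref{formal_function_base_change} a field inclusion $K(X_{1/Y_1},\hat{\mathcal{L}}_1)\hookrightarrow K(X_{2/Y_2},\hat{\mathcal{L}}_2)$ compatible with $K(X_1)\hookrightarrow K(X_2)$. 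Since $X'_2$ is normal and proper over $X_1$ with function field containing $K(X'_1)$, the universal property of relative normalization (\cite[\href{https://stacks.math.columbia.edu/tag/0BAK}{Tag 0BAK}]{stacks-project}) produces a unique morphism $g':X'_2\to X'_1$ over $X_1$; uniqueness is clear since $g'$ is determined on the generic point.

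For (2), the strategy is to reduce to the Galois descent argument already used in Proposition \ref{weak_G2_galois}. Since $X_1=X_2/G$, the finite group $G$ acts on $H^0(X_{2/Y_2},\hat{\mathcal{L}}_2^{\otimes n})$ with invariants $H^0(X_{1/Y_1},\hat{\mathcal{L}}_1^{\otimes n})$, hence on $K(X_{2/Y_2},\hat{\mathcal{L}}_2)$ with fixed subfield $K(X_{1/Y_1},\hat{\mathcal{L}}_1)$; because $G$ already acts faithfully on $K(X_2)$, this is a Galois extension of degree $|G|$. A degree count identical to that in Proposition \ref{weak_G2_galois} then forces
\[
K(X_2)\otimes_{K(X_1)}K(X_{1/Y_1},\hat{\mathcal{L}}_1)\;\cong\;K(X_{2/Y_2},\hat{\mathcal{L}}_2).
\]
Restricting to the finite étale locus $U_1\subset X_1$ (and writing $U_2=g^{-1}(U_1)$), the identification $f_2^{-1}(U_2)\cong U_2\times_{U_1}f_1^{-1}(U_1)$ now follows from \cite[\href{https://stacks.math.columbia.edu/tag/03GE}{Tag 03GE}]{stacks-project}: both sides are relative normalizations of $U_2$ in the same field extension, and integral closure commutes with étale base change.

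I expect the main obstacle to be the construction step in (1), namely verifying that the iterated normalizations from Theorem \ref{G3_section_refine} assemble into $f_i:X'_i\to X_i$ whose function field is exactly $K(X_{i/Y_i},\hat{\mathcal{L}}_i)$ and which remains étale near the lift of $Y_i$. At each intermediate stage one must check that the completion along the lifted $Y_i$ still satisfies the weak G2 hypothesis so that the theorem can be reapplied, and that the pullback of $\mathcal{L}_i$ continues to parametrize the residual extension. In characteristic zero one can sidestep this entirely by picking a single primitive element of $K(X_{i/Y_i},\hat{\mathcal{L}}_i)$ over $K(X_i)$ and applying Theorem \ref{G3_section_refine} once, collapsing the construction and trivializing the iteration bookkeeping.
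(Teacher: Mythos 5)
Your proposal is correct and follows essentially the same route as the paper: part (1) via the universal property of relative normalization applied to the compatible field inclusions coming from Theorem \ref{formal_function_base_change}, and part (2) via the Galois-invariants/degree-count argument of Proposition \ref{weak_G2_galois} to get $K(X_{2/Y_2},\hat{\mathcal{L}}_2)\cong K(X_2)\otimes_{K(X_1)}K(X_{1/Y_1},\hat{\mathcal{L}}_1)$, followed by the fact that integral closure commutes with \'etale base change. Your closing remark about collapsing the iterated use of Theorem \ref{G3_section_refine} with a single primitive element in characteristic zero is a sensible simplification but does not change the argument.
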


\begin{proof}
    The proof is the same as Proposition \ref{normalization_base_change} using the commutative diagram
    $$
        \begin{tikzcd}
            K(X_{2/Y_2},\hat{\mathcal{L}}_2)
            &K(X_{1/Y_1},\hat{\mathcal{L}}_1)\otimes_{K(X_1)}K(X_2) \arrow[l]
            &K(X_{1/Y_1}) \arrow[l]\\
            &K(X_2) \arrow[u] \arrow[ul]
            &K(X_1) \arrow[l] \arrow[u]
        \end{tikzcd}
    $$ and Proposition \ref{weak_G2_galois}.
\end{proof}

\section{Cohomological dimension and Grothendieck--Lefschetz condition}

In this section, we review the notion of (coherent) cohomological dimension and Grothendieck--Lefschetz condition, and how they are related to the weak G2 and G3 properties. 

\begin{defn}
    The \emph{coherent cohomological dimension} of a scheme $X$ is defined as
    $$
        \ccd(X)=\text{max} \left\{i\geq 0 \middle|\ \text{there exists } \mathcal{F}\in \mathrm{Coh}(X) \text{  such that } H^i(X,\mathcal{F})\neq 0 \right\}.
    $$
    
\end{defn}

The following lemma relates $\ccd(X)$ to vanishings of the cohomology groups of twists of negative powers of an ample line bundle.

\begin{lemma} \label{ccd_ample}
    Let $\mathcal{O}(1)$ be an ample line bundle on $X$. Write $\mathcal{O}(m)$ for its $m$-th power. Then $\ccd(X)\leq r$ if and only if there exist $M\in \mathbb{Z}_{\ge0}$ and a line bundle $\mathcal{L}$ that $H^i(X,\mathcal{L}(-m))=0$ for all $i>r$ and $m\geq M$, where $\mathcal{L}(-m):=\mathcal{L}\otimes\mathcal{O}(-m)$.
\end{lemma}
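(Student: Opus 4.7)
The plan is a standard Serre-style two-step argument. The forward direction is immediate: if $\ccd(X)\le r$, one takes $\mathcal{L}=\mathcal{O}_X$ and $M=0$, and observes that $\mathcal{L}(-m)=\mathcal{O}(-m)$ is coherent, so $H^i(X,\mathcal{L}(-m))=0$ for all $i>r$ and all $m\ge 0$ directly from the definition of $\ccd$.

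For the converse, suppose $\mathcal{L}$ and $M$ satisfy the hypothesis. I want to show that $H^i(X,\mathcal{F})=0$ for every coherent $\mathcal{F}$ and every $i>r$. The key input is Serre's generation theorem for the ample line bundle $\mathcal{O}(1)$: applied to the coherent sheaf $\mathcal{F}\otimes\mathcal{L}^{-1}$, it produces, for all sufficiently large $m$, a surjection $\mathcal{O}(-m)^a\twoheadrightarrow \mathcal{F}\otimes\mathcal{L}^{-1}$. Twisting by $\mathcal{L}$ gives a surjection $\mathcal{L}(-m)^a\twoheadrightarrow \mathcal{F}$ with coherent kernel $\mathcal{K}$, and we may arrange that $m\ge M$. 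This is the point where the flexibility of choosing $\mathcal{L}$ matters: feeding $\mathcal{F}\otimes\mathcal{L}^{-1}$ (rather than $\mathcal{F}$) into Serre's theorem is what rewrites any coherent sheaf as a quotient of copies of $\mathcal{L}(-m)$, so that the hypothesized vanishing becomes applicable.

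Armed with this, I would finish by descending induction on $i$. The base case uses Grothendieck's vanishing theorem: $X$ is noetherian of finite Krull dimension, so $H^i(X,-)=0$ for $i>\dim X$, and the desired vanishing holds trivially for $i$ sufficiently large. For the inductive step, fix $i>r$ and assume the vanishing is already known in degree $i+1$ for every coherent sheaf. Given a coherent $\mathcal{F}$, choose $m\ge M$ and a short exact sequence $0\to\mathcal{K}\to\mathcal{L}(-m)^a\to\mathcal{F}\to 0$ as above; the associated long exact sequence contains
$$
H^i(X,\mathcal{L}(-m)^a)\longrightarrow H^i(X,\mathcal{F})\longrightarrow H^{i+1}(X,\mathcal{K}).
$$
The left term vanishes because $i>r$ and $m\ge M$, and the right term vanishes by the inductive hypothesis applied to $\mathcal{K}$, so $H^i(X,\mathcal{F})=0$.

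I do not expect a genuine obstacle here, since this is the textbook pattern used in proofs of Serre vanishing and its descendants. The only piece worth stating carefully is the Serre-twist trick in the second paragraph, which is precisely the mechanism turning a hypothesis about one line bundle $\mathcal{L}$ into a statement about arbitrary coherent sheaves.
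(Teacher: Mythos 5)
Your argument is correct and is essentially identical to the paper's own proof: both establish the nontrivial direction by descending induction starting from Grothendieck vanishing, applying Serre's generation theorem to $\mathcal{F}\otimes\mathcal{L}^{-1}$ twisted by $\mathcal{O}(m)$ to present $\mathcal{F}$ as a quotient of $\mathcal{L}(-m)^{\oplus a}$ with $m\ge M$, and then reading off the vanishing from the long exact sequence. No substantive difference.
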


\begin{proof}
     It suffices to prove the if part. We prove $ \ccd(X)\leq i$ by reverse induction on $i\geq r$. When $i>\dim X$, this is trivial. Suppose that $\ccd(X)\leq i+1$ and $i\geq r$, we prove that $\ccd(X)\leq i$. For any coherent sheaf $\mathcal{F}$ on $X$, we have that $ \mathcal{H}(m):=\mathcal{F}\otimes\mathcal{L}^{-1}(m)$ is generated by global sections for sufficiently large $m$ (and $m\ge M$). Then we have an exact sequence
    $$
        0\longrightarrow \mathcal{G}\longrightarrow \mathcal{O}_X^{\oplus N}\longrightarrow \mathcal{H}(m) \longrightarrow0
    $$ with a coherent sheaf $\mathcal{G}$, and then an exact sequence
    $$
        0\longrightarrow \mathcal{G}\otimes\mathcal{L}(-m)\longrightarrow \mathcal{L}(-m)^{\oplus N}\longrightarrow \mathcal{F} \longrightarrow0.
    $$
    Taking cohomology groups gives
    $$
        0=H^i(X,\mathcal{L}(-m)^{\oplus N})\longrightarrow H^i(X,\mathcal{F})\longrightarrow H^{i+1}(X,\mathcal{G}\otimes\mathcal{L}(-m))=0
    $$ is exact, hence $H^i(X,\mathcal{F})=0$ and then $\ccd(X)\leq i$.
\end{proof}

\begin{defn}
    Let $Y$ be a closed subvariety of an irreducible projective variety $X$. We say that the pair $(X, Y)$ \emph{satisfies the Grothendieck--Lefschetz condition $\mathrm{Lef}(X, Y)$} if for every connected open subset $U$ of $X$ containing $Y$ and every vector bundle $\mathcal{E}$ on $U$ the natural map $H^0(U, \mathcal{E}) \to H^0(X_{/Y}, \hat{i}^* \mathcal{E})$ is an isomorphism, where $\hat{i}: X_{/Y} \to U$ is the canonical morphism.
\end{defn}

\begin{rmk} \label{Lef_diff}
    We will review some results in the book \cite{hartshorne2006ample} and \cite{badescu2004projective}, in which the definitions of Grothendieck--Lefschetz condition are slightly different. The above definition is the same as \cite{badescu2004projective}, and the two definitions are the same when $X$ satisfies Serre's condition $S_2$ and $Y$ intersects nontrivally with every effective divisor of $X$ (using the algbraic Hartogs' theorem).
\end{rmk}

The above concepts are highly related, especially in the case that $X$ is smooth. The following proposition is exactly the same as \cite[Chap. \uppercase\expandafter{\romannumeral 4}, Prop. 1.1]{hartshorne2006ample}.

\begin{prop} \label{ccd=lef}
    Let $Y$ be a closed subvarity of an irreducible smooth projective variety $X$ of dimension $d\geq 2$ and $\hat{i}:X_{/Y}\to X$ denote the natural morphism. Then the following are equivalent:
    \begin{enumerate}[label=(\arabic*)]
        \item $\ccd(X\backslash Y)\leq d-2$,
        \item $\mathrm{Lef}(X,Y)$ and $Y$ intersects every effective divisor on $X$.
        \item Let $\mathcal{O}(1)$ be an ample line bundle on $X$. There exists an $M\in \mathbb{Z}_{\ge0}$ that the canonical map $H^0(X_{/Y},\hat{i}^*\mathcal{O}(m))\cong H^0(X,\mathcal{O}(m))$ is an isomorphism for $m\geq M$.
    \end{enumerate}
\end{prop}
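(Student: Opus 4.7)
The plan is to prove the three-way equivalence via the cyclic chain (2) $\Rightarrow$ (3) $\Rightarrow$ (1) $\Rightarrow$ (2), following the template of \cite[Chap. IV, Prop. 1.1]{hartshorne2006ample}. The implication (2) $\Rightarrow$ (3) is immediate: since $X$ is irreducible it is connected, so $X$ itself qualifies as a connected open neighborhood of $Y$, and applying $\mathrm{Lef}(X,Y)$ to $\mathcal{E} = \mathcal{O}(m)$ yields the required isomorphism on global sections.

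For the key implication (3) $\Rightarrow$ (1), I would apply Lemma \ref{ccd_ample} with $\mathcal{L} = \omega_X$, which reduces the task to showing that $H^i(X\setminus Y, \omega_X(-m)) = 0$ for $i\in\{d-1,d\}$ and $m$ sufficiently large. The local cohomology excision sequence for $Y \subset X$ reads
$$H^{d-1}(X,\omega_X(-m)) \to H^{d-1}(X\setminus Y,\omega_X(-m)) \to H^d_Y(X,\omega_X(-m)) \xrightarrow{\alpha} H^d(X,\omega_X(-m)) \to H^d(X\setminus Y,\omega_X(-m)) \to 0,$$
in which the first term vanishes for $m\gg 0$ by Serre vanishing (using $d\geq 2$ so that $d-1\geq 1$). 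Thus it suffices to show $\alpha$ is an isomorphism. Serre duality on the smooth projective $X$ identifies $H^d(X,\omega_X(-m))$ with the $k$-dual of $H^0(X,\mathcal{O}(m))$, and Grothendieck's formal duality identifies $H^d_Y(X,\omega_X(-m))$ with the topological dual of $H^0(X_{/Y},\mathcal{O}(m))$, with the identifications intertwining $\alpha^\vee$ with the canonical comparison map. Hypothesis (3) makes this comparison map an isomorphism for $m\geq M$, so $\alpha$ is an isomorphism and both required vanishings follow.

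For (1) $\Rightarrow$ (2), the divisor statement is quick: if some effective divisor $D$ were disjoint from $Y$, then $j\colon D\hookrightarrow X\setminus Y$ would be a closed immersion of a projective variety of dimension $d-1$, and Serre duality on $D$ would give $H^{d-1}(X\setminus Y, j_*\omega_D) = H^{d-1}(D,\omega_D)\neq 0$, contradicting $\ccd(X\setminus Y)\leq d-2$. For the $\mathrm{Lef}$ condition, given a connected open $U\supset Y$ and a vector bundle $\mathcal{E}$ on $U$, the injectivity of $H^0(U,\mathcal{E})\to H^0(X_{/Y},\hat{\mathcal{E}})$ follows from Krull's intersection theorem combined with torsion-freeness of $\mathcal{E}$ and connectedness of $U$; for surjectivity I would extend $\mathcal{E}$ to a coherent sheaf on $X$ and compare sections on $X$, on $U$, and on $X_{/Y}$ via Grothendieck's comparison theorem (SGA 2) relating formal completion cohomology to local cohomology, using $\ccd(X\setminus Y)\leq d-2$ together with the analogous bound for the closed subset $X\setminus U\subset X\setminus Y$. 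The main obstacle is the clean invocation of formal duality in (3) $\Rightarrow$ (1), which crucially uses smoothness of $X$ (so that $\omega_X$ is dualizing), properness (for the topological duality to make sense), and $d\geq 2$ (for Serre vanishing of the $(d-1)$-st cohomology); the surjectivity portion of the $\mathrm{Lef}$ condition is the other technically delicate piece but follows the standard SGA 2 template.
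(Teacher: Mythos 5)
Your proposal is correct and follows essentially the same route as the paper: the same cyclic chain of implications, with Lemma \ref{ccd_ample} plus Serre duality, Hartshorne's formal duality, the local cohomology exact sequence, and Serre vanishing doing the work in $(3)\Rightarrow(1)$, and the same ``sandwich'' of $H^0(U,\mathcal{E})$ between $H^0(X,\mathcal{E}_X)$ and $H^0(X_{/Y},\hat{i}^*\mathcal{E})$ for the Lefschetz condition in $(1)\Rightarrow(2)$. Your explicit argument that a complete divisor $D$ disjoint from $Y$ would force $H^{d-1}(X\setminus Y, j_*\omega_D)\neq 0$ is just a concrete justification of the step the paper dismisses as obvious.
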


\begin{proof}
    $(1)\Rightarrow (2)$: It is obvious that there exists no proper effective divisor in $X\backslash Y$. We show that $\mathrm{Lef}(X,Y)$ is satisfied. Let $U$ be an open subset in $X$ containing $Y$, and $\mathcal{E}_{U}$ be a vector bundle on $U$. Let $\mathcal{E}_{X}$ be a coherent sheaf on $X$ that extends $\mathcal{E}_{U}$, and let $\mathcal{F}_{X}=\mathcal{H}om_{\mathcal{O}_X}(\mathcal{E}_{X},\omega_X)$ be the sheaf of homomorphism from $\mathcal{E}_X$ to the dualizing sheaf $\omega_X$ of $X$. Then we have $\mathcal{F}_{U}:=\mathcal{F}_X|_{U}=\mathcal{H}om_{\mathcal{O}_U}(\mathcal{E}_U,\omega_X|_{U})$ is a vector bundle on $U$.

    Now, by Serre duality, we have
    $$
        H^0(X,\mathcal{E}_X)\cong (\ext^d(\mathcal{E}_X,\omega_X))'=(H^d(X,\mathcal{F}_X))',
    $$ where $'$ denotes the dual vector space. By Harshorne's formal duality (See \cite[Thm 3.3]{hartshorne2006ample}, in this case it is the following simple application of Serre duality), we have
    \begin{align*}
        H^0(X_{/Y},\hat{i}^*\mathcal{E}_X)
        &\cong \mathrm{lim}H^0(X,\mathcal{E}_X\otimes\mathcal{O}_X/\mathcal{I}_Y^n)\\
        &\cong (\mathrm{colim} {\ext}^d(\mathcal{E}_X\otimes\mathcal{O}_X/\mathcal{I}_Y^n,\omega_X))' \\
        &\cong(\mathrm{colim} {\ext}^d(\mathcal{O}_X/\mathcal{I}_{Y}^n,\mathcal{F}_X))'\\
        &\cong(H_{Y}^d(X,\mathcal{F}_X))',
    \end{align*} where $H^i_{Y}(X,\mathcal{F}_X)$ is the local cohomology of $X$ with support in $Y$.
    
    For local cohomology, we have the exact sequence
    $$
        H^{d-1}(X\backslash Y,\mathcal{F}_X) \to
        H^{d}_{Y}(X,\mathcal{F}_X)\to 
        H^d(X,\mathcal{F}_X)\to 
        H^d(X\backslash Y, \mathcal{F}_X),
    $$ and by condition $\ccd(X\backslash Y)\leq d-2$ we have 
    $$
        H^{d}_{Y}(X,\mathcal{F}_X)\cong H^d(X,\mathcal{F}_X)
    $$
    Combining the duality results above, we have 
    $$
        H^0(X_{/Y},\hat{i}^*\mathcal{E}_X)\cong H^0(X, \mathcal{E}_X)
    $$
    As $\mathcal{E}_U$ is locally free, we have an injective sheaf homomorphism $\mathcal{E}_U\hookrightarrow \hat{i}_*\hat{i}^*\mathcal{E}_U$ on $U$, which implies injections
    $$
        H^0(X, \mathcal{E}_X)\hookrightarrow H^0(U,\mathcal{E}_U)\hookrightarrow H^0(U_{/Y},\hat{i}^*\mathcal{E}_U).
    $$
    Thus we have $H^0(U,\mathcal{E}_U)\cong H^0(U_{/Y},\hat{i}^*\mathcal{E}_U)$ hence $\mathrm{Lef}(X,Y)$ holds.
    
    $(2)\Rightarrow (3)$: Apply $\mathrm{Lef}(X,Y)$ to the case $U=X$ and $\mathcal{E}=\mathcal{O}(m)$.
    
    $(3)\Rightarrow (1)$: Since $\mathcal{O}(1)$ is an ample line bundle on $X$, it suffices to prove that $H^i(X\backslash Y,\omega_X(-m))=0$ when $i>d-2$ and $m$ are large enough according to the Lemma \ref{ccd_ample}. Again we use the condition and the duality theorems above, we have
    $$
        H^{d}_{Y}(X,\omega_{X}(-m))\cong H^d(X,\omega_{X}(-m)).
    $$ By Serre vanishing, we have
    $$
    H^{d-1}(X,\omega_{X}(-m))\cong H^1(X,\mathcal{O}(m))=0
    $$ for $m$ large enough.
    Now, the expected vanishing follows from the exact sequence
    \begin{align*}
        0\to H^{d-1}(X\backslash Y,\omega_X(-m)) \to H^{d}_Y(X,\omega_X(-m)) \xrightarrow{\sim}
        H^d(X,\omega_X(-m))\to H^d(X\backslash Y, \omega_X(-m))\to 0.
    \end{align*}

\end{proof}

The following theorem of Hartshorne and Speiser states a relation between the G3 property and the Grothendieck--Lefschetz condition. See \cite[Thm. 10.7]{badescu2004projective} or \cite[Chap.\uppercase\expandafter{\romannumeral 5}, Prop. 2.1]{hartshorne2006ample}.

\begin{thm}[Hartshorne--Speiser]
    Let $Y$ be a closed subvariety of an irreducible projective variety $X$ (of dimension $d\geq 2$) that locally satisfies Serre's condition $S_2$ (e.g. when $X$ is normal). Assume that $Y$ is G3 in $X$ and $Y$ intersects nontrivially with every effective divisor of $X$. Then $(X,Y)$ satisfies the condition $\mathrm{Lef}(X,Y)$.
\end{thm}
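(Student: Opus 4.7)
The plan is to verify, for every connected open $U \supset Y$ in $X$ and every vector bundle $\mathcal{E}$ on $U$, that the restriction map $\phi \colon H^0(U,\mathcal{E}) \to H^0(X_{/Y}, \hat{i}^*\mathcal{E})$ is a bijection. Injectivity is soft: if $s \in H^0(U,\mathcal{E})$ dies in the formal completion, then for each $y \in Y$ faithful flatness of $\mathcal{O}_{U,y} \to \widehat{\mathcal{O}}_{U,y}$ combined with local freeness of $\mathcal{E}_y$ forces $s_y = 0$, so $\{s = 0\}$ contains a Zariski neighborhood of $Y$. Since $U$ is open in the irreducible variety $X$ it is itself irreducible, whence $s \equiv 0$.

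For surjectivity, I would first produce from $\hat{s}$ a rational section of $\mathcal{E}$ on $U$, then rule out its poles. Cover a Zariski neighborhood of $Y$ in $U$ by opens $\{V_\alpha\}$ on which $\mathcal{E}$ trivializes, with transition cocycle $g_{\alpha\beta} \in \mathrm{GL}_r(\mathcal{O}(V_\alpha \cap V_\beta))$. Via each trivialization, the restriction $\hat{s}|_{(V_\alpha)_{/Y \cap V_\alpha}}$ becomes an $r$-tuple of formal regular functions $f_\alpha^{(1)}, \dots, f_\alpha^{(r)}$. Because $Y$ is G3, the ring $K(X_{/Y})$ is a field and the map $K(X) \to K(X_{/Y})$ is an isomorphism, so each $f_\alpha^{(i)}$ lifts canonically to an honest rational function on $X$. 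The cocycle identity $f_\alpha = g_{\alpha\beta} \cdot f_\beta$ holds in $K(X_{/Y})$ and hence in $K(X)$, so the tuples assemble into a single rational section $s$ of $\mathcal{E}$ defined on the dense open $\bigcup_\alpha V_\alpha \subset U$ --- i.e., a rational section on the irreducible $U$ --- whose formal completion along $Y$ equals $\hat{s}$ by construction.

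To finish, $s$ must be regular on all of $U$. Near $Y$, regularity follows again from faithful flatness: $\hat{s}$ lives in $\hat{i}^*\mathcal{E}$, not merely in its sheaf of rational sections, so $s$ is regular on some Zariski open $W \supset Y$. Set $Z := U \setminus W$. If $Z$ had a codimension-$1$ component $D$, its closure $\bar{D}$ in $X$ would satisfy $\bar{D} \cap U = D$ (as $D$ is closed in $U$) and hence $\bar{D} \cap Y = \emptyset$, contradicting the hypothesis that $Y$ meets every effective divisor of $X$. Thus $Z$ has codimension $\geq 2$ in $U$, and the $S_2$ hypothesis --- inherited from $X$ by the open $U$ --- guarantees that a rational section of a vector bundle that is regular outside a closed subset of codimension $\geq 2$ extends uniquely to a global regular section. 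This yields $s \in H^0(U, \mathcal{E})$ with $\phi(s) = \hat{s}$, completing surjectivity.

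The crux of the argument is the conversion of formal data to rational data in the second paragraph: it is precisely here that the G3 identification $K(X) = K(X_{/Y})$ is indispensable, and without it one would have no algebraic model on $X$ for the formal functions produced by trivializing $\hat{s}$. Checking that the cocycle compatibility $f_\alpha = g_{\alpha\beta} f_\beta$ survives the passage from $K(X_{/Y})$ back to $K(X)$ is automatic given that $\alpha_{X,Y}$ is an isomorphism of fields. The remainder --- eliminating divisorial poles via the divisor-intersection hypothesis, then extending over codimension-$\geq 2$ loci via $S_2$ --- is a routine algebraic Hartogs argument.
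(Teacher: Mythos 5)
Your overall skeleton --- produce a rational section from the formal one, control poles near $Y$ by faithful flatness, and eliminate poles away from $Y$ via the divisor-intersection hypothesis plus an $S_2$ Hartogs extension --- is the right one, and your injectivity argument and final two steps are sound. But there is a genuine gap at the crux, in your second paragraph. The functions $f_\alpha^{(i)}$ obtained by trivializing $\mathcal{E}$ on $V_\alpha$ are sections of $\mathcal{O}_{X_{/Y}}$ (or of $\mathcal{M}_{X_{/Y}}$) only over the piece $(V_\alpha)_{/Y\cap V_\alpha}$, whereas $K(X_{/Y})$ is by definition the ring of \emph{global} sections of $\mathcal{M}_{X_{/Y}}$. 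The G3 isomorphism $K(X)\cong K(X_{/Y})$ therefore says nothing about the $f_\alpha^{(i)}$: local formal functions are vastly more plentiful than global formal rational functions and are typically transcendental over $K(X)$. Already for $X=\mathbb{P}^2$ and $Y$ a line, the formal functions on a standard affine chart form essentially $k[x]\llbracket y\rrbracket$, whose total ring of fractions has infinite transcendence degree over $k$, while $K(X_{/Y})=k(x,y)$. So the claim that each $f_\alpha^{(i)}$ ``lifts canonically to an honest rational function on $X$'' is unjustified and false as stated, and with it the gluing of the $f_\alpha$ into a rational section. This is not a technicality: the entire content of the theorem is that \emph{global} formal sections are algebraic, and the step where you pass from the global datum $\hat{s}$ to local coordinates is exactly where that globality is discarded.

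The repair is to extract global formal rational functions from $\hat{s}$ before trivializing anything. This is what the paper does for its weak-G3 analogue (Proposition \ref{g3_to_lef}) and what the cited sources do here: fix an ample $\mathcal{L}$ on $X$; first treat $\mathcal{E}=\mathcal{L}^m$, where for any nonzero $t\in H^0(X,\mathcal{L}^m)$ the quotient $\hat{s}/\hat{t}$ is a genuine element of $K(X_{/Y})$, hence lies in $K(X)$ by G3, yielding the rational section $s=(\hat{s}/\hat{t})\cdot t$ to which your pole-elimination applies; then for general $\mathcal{E}$ choose $m$ so that $\mathcal{E}^{\vee}\otimes\mathcal{L}_U^m$ is globally generated, dualize a presentation to obtain a left-exact sequence $0\to\mathcal{E}\to(\mathcal{L}_U^m)^{\oplus l}\to(\mathcal{L}_U^m)^{\oplus k}$, and deduce $H^0(U,\mathcal{E})\cong H^0(X_{/Y},\hat{i}^*\mathcal{E})$ from the line-bundle case by comparing global sections of this sequence with those of its formal completion. (Equivalently, one may pair $\hat{s}$ with a rational basis of $\mathcal{E}^{\vee}$, which does land in $K(X_{/Y})$.) Either way, the input to the G3 hypothesis must be a globally defined formal rational function.
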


We need a slightly different version that replaces G3 with weakly G3 with respect to an ample line bundle.

\begin{prop} \label{g3_to_lef}
    Let $Y$ be a closed subvariety of a projective variety $X$ (of dimension $d\geq 2$) that locally satisfies Serre's condition $S_2$ (e.g. when $X$ is normal). Assume that $Y$ is weakly G3 in $X$ with respect to an ample line bundle $\mathcal{L}$ and $Y$ intersects nontrivially with every effective divisor of $X$. Then $(X,Y)$ satisfies the condition $\mathrm{Lef}(X,Y)$.
\end{prop}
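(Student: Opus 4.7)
The plan is to mimic the proof of the classical Hartshorne--Speiser theorem \cite[Thm.~10.7]{badescu2004projective}, observing that its full G3 hypothesis $K(X) = K(X_{/Y})$ is only invoked to establish a key comparison isomorphism on sections of powers of an auxiliary ample line bundle; the weak G3 hypothesis with respect to $\mathcal{L}$ suffices for precisely this step.

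The crucial assertion is that for $m$ sufficiently large, the natural map
\[
    H^0(X, \mathcal{L}^m) \longrightarrow H^0(X_{/Y}, \hat{\mathcal{L}}^m)
\]
is an isomorphism. For injectivity, a section $s$ in the kernel has vanishing germ at every $y \in Y$: it lies in the completion of $\mathcal{O}_{X,y}$ along the ideal of $Y$, and Krull's intersection theorem for the local ring forces $s_y = 0$; hence $s$ vanishes on a Zariski-open neighborhood of $Y$, and then globally by the irreducibility of $X$. For surjectivity, given $\hat{s} \in H^0(X_{/Y}, \hat{\mathcal{L}}^m)$, choose a nonzero $t \in H^0(X, \mathcal{L}^m)$, which exists for $m \gg 0$ by the ampleness of $\mathcal{L}$. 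Then $\hat{s}/t \in K(X_{/Y}, \hat{\mathcal{L}}) = K(X)$ by the weak G3 hypothesis, so $\hat{s} = f \cdot t$ for some $f \in K(X)$. Viewed as a rational section of $\mathcal{L}^m$, the polar divisor of $f \cdot t$ is an effective Weil divisor disjoint from $Y$, and hence trivial by the assumption that $Y$ meets every effective divisor. The $S_2$ condition (algebraic Hartogs) then upgrades regularity in codimension one to regularity everywhere, producing $f \cdot t \in H^0(X, \mathcal{L}^m)$ whose formal germ along $Y$ is $\hat{s}$.

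Once the comparison on powers of $\mathcal{L}$ is in hand, the remainder of the argument follows the classical Hartshorne--Speiser route: for an arbitrary vector bundle $\mathcal{E}$ on a connected open $U \supset Y$, one extends $\mathcal{E}$ to a coherent sheaf on $X$, resolves locally by twists of $\mathcal{L}^{-m}$ using ampleness, and chases the resulting short exact sequences together with the Theorem on Formal Functions, the $S_2$ property handling extension of sections across codimension-two loci. The main obstacle I anticipate is bookkeeping: verifying that the classical proof uses the G3 hypothesis solely through the comparison on ample line bundles and never via the stronger field equality $K(X) = K(X_{/Y})$. Since the G3 property enters the classical argument precisely to ensure that a formal rational object with bounded poles descends to an algebraic one, which is exactly the content of weak G3 with respect to $\mathcal{L}$, I expect this verification to be essentially routine.
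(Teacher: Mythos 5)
Your proposal is correct and follows essentially the same route as the paper: the heart of both arguments is the comparison isomorphism $H^0(X,\mathcal{L}^m)\cong H^0(X_{/Y},\hat{\mathcal{L}}^m)$, obtained by writing a formal section as $f\cdot t$ with $f\in K(X)$ via the weak G3 hypothesis, observing that its polar locus is disjoint from $Y$ and hence contains no divisor, and extending by $S_2$/Hartogs; the passage to arbitrary vector bundles on $U$ by resolving (the paper resolves the dual of) $\mathcal{E}$ by powers of $\mathcal{L}$ and chasing the left-exact sequences is likewise the same. The only cosmetic difference is that the paper works with a presentation of $\mathcal{E}^{\vee}\otimes\mathcal{L}_U^m$ on $U$ and dualizes rather than extending $\mathcal{E}$ to $X$.
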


\begin{proof}
    The proof is almost the same as the proof of the above theorem. For an integer $m\geq 0$, we fix an embedding $\mathcal{L}^m\hookrightarrow K_X$, and then induce an embedding $\hat{\mathcal{L}}^m:=\hat{i}^*\mathcal{L}^m\to \mathcal{M}_{X_{/Y}}$ where $\hat{i}:X_{/Y}\to X$ is the natural morphism. We have the commutative diagram
    $$
        \begin{tikzcd}
             H^0(X,\mathcal{L}^m) \arrow[r,hook] \arrow[d]
            &K(X,\mathcal{L})=K(X)  \arrow[d,"\alpha_{X,Y}",hook]\\
            H^0(X_{/Y},\hat{\mathcal{L}}^m) \arrow[r,hook]
            &K(X_{/Y},\hat{\mathcal{L}}).
        \end{tikzcd}
    $$
    Since $Y$ is weakly G3 with respect to $\mathcal{L}$, the vertical right arrow is an isomorphism. For a section $s\in K(X_{/Y},\hat{\mathcal{L}})$, it has no poles at any point $y\in Y$, as $\mathcal{O}_{X_{/Y},y}\cap K(X)=\mathcal{O}_{X,y}$. So, there exists an open subset $V$ of $X$ that contains $Y$, such that $s\in H^0(V,\mathcal{L}^m)$. Then $s$ can be extended to $X$ by the algebraic Hartogs' Theorem. 

    Now we prove that $(X,Y)$ satisfies $\mathrm{Lef}(X,Y)$. For any open subset $U$ of $X$ containing $Y$, we have $H^0(X,\mathcal{L}^m)\cong H^0(U,\mathcal{L}_U^m)\cong H^0(X_{/Y},\hat{\mathcal{L}}^m)$ by the above argument. For any vector bundle $\mathcal{E}$ on $U$, we denote $\mathcal{E}^{\vee}$ as its dual sheaf. Then by coherence of $\mathcal{E}^{\vee}$ and ampleness of $\mathcal{L}_U$, there exists a sufficiently large $m$ that $\mathcal{E}^{\vee}\otimes \mathcal{L}_U^m$ is generated by global sections and further sits in the exact sequence
    $$
        \mathcal{O}_U^{\oplus k}\longrightarrow \mathcal{O}_U^{\oplus l}\longrightarrow \mathcal{E}^{\vee}\otimes \mathcal{L}_U^m\longrightarrow 0 .
    $$
    Tensoring $\mathcal{L}^{-m}$ and dualizing, we get an exact sequence
    $$
        0\longrightarrow\mathcal{E}\longrightarrow (\mathcal{L}_U^m)^{\oplus l}\longrightarrow (\mathcal{L}_U^m)^{\oplus k}.
    $$
    Pulling back to $X_{/Y}$, we also have 
    $$
        0\longrightarrow \hat{\mathcal{E}}\longrightarrow (\hat{\mathcal{L}}^m)^{\oplus l}\longrightarrow (\hat{\mathcal{L}}^m)^{\oplus k}.
    $$
    Therefore, we get the commutative diagram with exact rows
    $$
        \begin{tikzcd}
            0 \arrow[r]
            & H^0(U,\mathcal{E}) \arrow[r] \arrow[d]
            & H^0(U,(\mathcal{L}_U^m)^{\oplus l}) \arrow[r] \arrow[d]
            & H^0(U,(\mathcal{L}_U^m)^{\oplus k}) \arrow[d] \\
            0 \arrow[r]
            & H^0(X_{/Y},\hat{\mathcal{E}}) \arrow[r]
            & H^0(X_{/Y},(\hat{\mathcal{L}}^m_U)^{\oplus l}) \arrow[r]
            & H^0(X_{/Y},(\hat{\mathcal{L}}^m_U)^{\oplus k}) .
        \end{tikzcd}
    $$
    The two vertical maps on the right are isomorphisms, thus the left vertical is also an isomorphism. So the condition $\mathrm{Lef}(X,Y)$ holds.
\end{proof}

The following is the weakly G3 version of the converse direction of \cite[Prop. 2.1]{hartshorne2006ample}. It is much simpler because the weak G3 property is much weaker than the G3 property, and as we mentioned in Remark \ref{Lef_diff}, the Grothendieck--Lefschetz condition $\mathrm{Lef}(X,Y)$ in the book $\cite{hartshorne2006ample}$ is slightly different from ours. 

\begin{prop}\label{lef_to_g3}
    Let $Y$ be a closed subvariety of an irreducible projective variety $X$ (of dimension $d\geq 2$) that $K(X_{/Y})$ is a field and $\mathcal{L}$ be a line bundle on $X$ that $K(X,\mathcal{L})=K(X)$. If the pair $(X,Y)$ satisfies the condition $\mathrm{Lef}(X,Y)$, then $Y$ is weakly G3 in $X$ with respect to $\mathcal{L}$.
\end{prop}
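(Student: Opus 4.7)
The goal is to show $\alpha_{X,Y}$ carries $K(X, \mathcal{L}) = K(X)$ isomorphically onto $K(X_{/Y}, \hat{\mathcal{L}})$. Because $K(X_{/Y})$ is a field by assumption, $\alpha_{X,Y}: K(X) \to K(X_{/Y})$ is a unital ring map between fields and therefore automatically injective, so the real task is surjectivity onto $K(X_{/Y}, \hat{\mathcal{L}})$.

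The plan is to invoke $\mathrm{Lef}(X, Y)$ directly on powers of $\mathcal{L}$. Taking $U = X$, which is connected since $X$ is irreducible, and $\mathcal{E} = \mathcal{L}^m$, the Lef condition produces isomorphisms $\hat{i}^* : H^0(X, \mathcal{L}^m) \xrightarrow{\sim} H^0(X_{/Y}, \hat{\mathcal{L}}^m)$ for every $m \geq 0$. Given any $s \in K(X_{/Y}, \hat{\mathcal{L}})$, I write $s = s_0/s_1$ with $s_0, s_1 \in H^0(X_{/Y}, \hat{\mathcal{L}}^m)$ and $s_1 \neq 0$, and lift to $t_0, t_1 \in H^0(X, \mathcal{L}^m)$ along this Lef isomorphism. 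Injectivity of $\alpha_{X,Y}$ forces $t_1 \neq 0$, and then $t_0/t_1 \in K(X, \mathcal{L})$ is the desired preimage of $s$.

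The one subtlety is verifying that $\alpha_{X,Y}(t_0/t_1) = s_0/s_1$ in $K(X_{/Y})$, i.e., that the pullback isomorphism on global sections is compatible with the ring map $\alpha_{X,Y}$ on rational functions. This follows directly from the construction: the embedding $\hat{\mathcal{L}}^m \hookrightarrow \mathcal{M}_{X_{/Y}}$ defining $K(X_{/Y}, \hat{\mathcal{L}})$ is obtained by pulling back $\mathcal{L}^m \hookrightarrow K_X$ along the flat morphism $\hat{i}: X_{/Y} \to X$, so the induced diagram relating the two sets of global sections and their fraction fields commutes functorially. This finishes the argument; the proof is substantially lighter than the Hartshorne--Speiser theorem because the weak G3 property makes no claim about formal rational functions outside $K(X_{/Y}, \hat{\mathcal{L}})$.
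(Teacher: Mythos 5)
Your proposal is correct and follows the same route as the paper: apply $\mathrm{Lef}(X,Y)$ with $U=X$ and $\mathcal{E}=\mathcal{L}^m$ to get $H^0(X,\mathcal{L}^m)\cong H^0(X_{/Y},\hat{\mathcal{L}}^m)$, then pass to ratios to conclude $K(X)=K(X,\mathcal{L})=K(X_{/Y},\hat{\mathcal{L}})$. The paper states this in one line; you have merely spelled out the injectivity, surjectivity, and compatibility checks that it leaves implicit.
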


\begin{proof}
    The condition $\mathrm{Lef}(X,Y)$ implies that $H^0(X,\mathcal{L}^m)\cong H^0(X_{/Y},\hat{\mathcal{L}}^m)$ and then $K(X)=K(X,\mathcal{L})=K(X_{/Y},\hat{\mathcal{L}})$.
\end{proof}

We end this section by stating an analogue of the following result of Hartshorne--Speiser. It is a combination of the previous analogues.

\begin{thm}[Hartshorne--Speiser]
    Let $Y$ be a closed subvariety of an irreducible smooth projective variety $X$ of dimension $d \geq 2$. Then the following are equivalent:
    \begin{enumerate}
        \item $Y$ is G2 in $X$ and $\ccd(X\backslash Y)\leq d-2$.
        \item $Y$ is G3 in $X$ and $Y$ intersects nontrivially with every effective divisor on $X$.
    \end{enumerate}
\end{thm}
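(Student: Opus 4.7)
The plan is to dispose of the two directions separately, using the machinery already built up in earlier sections.

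For $(2)\Rightarrow(1)$ the implication G3 $\Rightarrow$ G2 is immediate, so it suffices to deduce the bound on $\ccd$. Since $K(X_{/Y})=K(X)$ by G3 and $K(X,\mathcal{L})=K(X)$ for any ample line bundle $\mathcal{L}$ on $X$, one obtains the chain $K(X)=K(X,\mathcal{L})=K(X_{/Y})=K(X_{/Y},\hat{\mathcal{L}})$, i.e., $Y$ is weakly G3 with respect to $\mathcal{L}$. Combined with the hypothesis that $Y$ meets every effective divisor, Proposition \ref{g3_to_lef} yields $\mathrm{Lef}(X,Y)$, and Proposition \ref{ccd=lef} (valid because $X$ is smooth and $d\geq 2$) then delivers $\ccd(X\setminus Y)\leq d-2$.

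For $(1)\Rightarrow(2)$ Proposition \ref{ccd=lef} already extracts from $\ccd(X\setminus Y)\leq d-2$ both $\mathrm{Lef}(X,Y)$ and the fact that $Y$ meets every effective divisor, so the only nontrivial task is to upgrade G2 to G3, i.e., to force $n:=[K(X_{/Y}):K(X)]=1$. The plan is to perform a numerical comparison of global sections. Apply the Hartshorne--Gieseker theorem (Theorem \ref{G3_section}) to obtain a normal irreducible projective variety $X'$ and a finite surjection $f:X'\to X$ of degree $n$ together with a lift $i:Y\hookrightarrow X'$ such that $i(Y)$ is G3 in $X'$ and $f$ is étale in an open neighborhood of $i(Y)$.

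The étaleness of $f$ along $i(Y)$, together with the fact that $i$ is an isomorphism onto its image (so residue fields at corresponding points match), implies that the induced map of completed local rings $\hat{\mathcal{O}}_{X,y}\to\hat{\mathcal{O}}_{X',i(y)}$ is an isomorphism for every $y\in Y$. Hence $f$ restricts to an isomorphism of formal schemes $f:X'_{/i(Y)}\xrightarrow{\sim}X_{/Y}$. Now choose an ample line bundle $\mathcal{L}$ on $X$ and set $\mathcal{L}':=f^*\mathcal{L}$, which is ample on $X'$. Under the formal isomorphism $\hat{\mathcal{L}}'$ corresponds to $\hat{\mathcal{L}}$, and one gets the chain of maps
$$H^0(X',\mathcal{L}'^m)\hookrightarrow H^0(X'_{/i(Y)},\hat{\mathcal{L}}'^m)\cong H^0(X_{/Y},\hat{\mathcal{L}}^m)\cong H^0(X,\mathcal{L}^m),$$
where the first arrow is injective because any section in its kernel vanishes in the formal stalk at every point of $i(Y)$, hence by faithful flatness of formal completion vanishes in a Zariski neighborhood of $i(Y)$, and then identically on the irreducible variety $X'$; the final isomorphism is $\mathrm{Lef}(X,Y)$ applied with $U=X$ and $\mathcal{E}=\mathcal{L}^m$. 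Comparing leading terms of the Hilbert polynomials, and using the projection formula $c_1(\mathcal{L}')^d=n\cdot c_1(\mathcal{L})^d$ together with Serre vanishing for large $m$, the inequality $h^0(X',\mathcal{L}'^m)\leq h^0(X,\mathcal{L}^m)$ forces $n\leq 1$, hence $n=1$ and $Y$ is G3.

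The main obstacle I anticipate is verifying that Hartshorne--Gieseker really produces an isomorphism of formal schemes $X'_{/i(Y)}\cong X_{/Y}$ under $f$, and checking that the restriction to formal sections is injective; both come down to standard facts (étale plus matching residue fields gives isomorphic completed local rings, and formal completion along a closed subvariety of an irreducible variety is injective on sections of line bundles). Once these are in hand, the asymptotic dimension count is routine.
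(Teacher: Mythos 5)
Your argument is correct, but it cannot be ``the same as the paper's'': the paper does not prove this theorem at all, it simply cites \cite[Rmk.\ 11.24]{badescu2004projective} and \cite[Chap.\ V, Cor.\ 2.2]{hartshorne2006ample}. What you have done is reassemble a genuine proof from ingredients the paper already sets up. Your $(2)\Rightarrow(1)$ direction is exactly the sandwich $K(X)=K(X,\mathcal{L})\subseteq K(X_{/Y},\hat{\mathcal{L}})\subseteq K(X_{/Y})=K(X)$ followed by Proposition \ref{g3_to_lef} and Proposition \ref{ccd=lef}, and is fine (smoothness gives the $S_2$ hypothesis). For $(1)\Rightarrow(2)$, extracting $\mathrm{Lef}(X,Y)$ and the divisor-intersection property from Proposition \ref{ccd=lef} is immediate; the real content is the G2-to-G3 upgrade, where you invoke Theorem \ref{G3_section} and then kill the degree $n$ by comparing $h^0(X',\mathcal{L}'^m)$ with $h^0(X,\mathcal{L}^m)$ via the formal isomorphism $X'_{/i(Y)}\cong X_{/Y}$ (this is \cite[Lem.\ 9.19]{badescu2004projective}, already used in the paper's Example of the quotient of $\mathbb{P}^n$) and asymptotic Riemann--Roch with $(\mathcal{L}'^{\,d})=n\,(\mathcal{L}^{\,d})$. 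All the supporting facts you flag do hold: \'etaleness plus the trivial residue extension forced by $f\circ i=\mathrm{id}_Y$ gives the isomorphism on completed local rings, and injectivity of $H^0(X',\mathcal{L}'^m)\to H^0(X'_{/i(Y)},\hat{\mathcal{L}}'^m)$ follows from Krull intersection plus integrality of $X'$ (using that $Y\neq\emptyset$; note also that G2 forces $K(X_{/Y})$ to be a field, hence $Y$ connected, so Theorem \ref{G3_section} applies). This degree-count finish is a cleaner, more numerical route than the argument in Hartshorne--Speiser, which instead applies $\mathrm{Lef}(X,Y)$ to the locally free algebra $f_*\mathcal{O}_{X'}$ over the \'etale locus to split the cover; your version trades that algebra argument for Snapper's theorem and Serre vanishing, at no loss of generality in the smooth projective setting considered here.
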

\begin{proof}
    See \cite[Rmk. 11.24]{badescu2004projective} or \cite[Chap. \uppercase\expandafter{\romannumeral 5}, Cor. 2.2]{hartshorne2006ample}. 
    
\end{proof}

\begin{prop}\label{ccd=g3}
    Let $Y$ be a closed subvariety of an irreducible smooth projective variety $X$ of dimension $d \geq 2$, and $\mathcal{L}$ an ample line bundle on $X$. Then the following are equivalent:
    \begin{enumerate}
        \item $\ccd(X\backslash Y)\leq d-2$.
        \item $Y$ is weakly G3 in $X$ with respect to $\mathcal{L}$ and $Y$ intersects nontrivially with every effective divisor on $X$.
    \end{enumerate}
\end{prop}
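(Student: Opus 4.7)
The plan is to deduce this as a formal combination of Propositions \ref{ccd=lef}, \ref{g3_to_lef}, and \ref{lef_to_g3}, using Lef$(X,Y)$ as the intermediate bridge between the cohomological statement (1) and the geometric/field-theoretic statement (2). Since $X$ is smooth, it is normal and locally satisfies Serre's condition $S_2$, so the hypotheses of these earlier results are at our disposal.

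For the implication (2) $\Rightarrow$ (1), the setup is immediate: weak G3 with respect to an ample $\mathcal{L}$ together with the effective-divisor hypothesis feeds directly into Proposition \ref{g3_to_lef} to yield $\mathrm{Lef}(X,Y)$, after which Proposition \ref{ccd=lef} gives $\ccd(X\setminus Y)\leq d-2$. The only thing to notice is that ``weakly G3'' already carries with it the assumption that $K(X_{/Y})$ is a field, so no extra verification is needed here.

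For the implication (1) $\Rightarrow$ (2), Proposition \ref{ccd=lef} immediately produces $\mathrm{Lef}(X,Y)$ and the fact that $Y$ meets every effective divisor, and since $\mathcal{L}$ is ample we have $K(X,\mathcal{L})=K(X)$. To apply Proposition \ref{lef_to_g3} and conclude that $Y$ is weakly G3 with respect to $\mathcal{L}$, the one hypothesis left to check is that $K(X_{/Y})$ is a field. I will obtain this by feeding $U=X$ and $\mathcal{E}=\mathcal{O}_X$ into $\mathrm{Lef}(X,Y)$, yielding $H^0(X_{/Y},\mathcal{O}_{X_{/Y}})\cong H^0(X,\mathcal{O}_X)=k$; this rules out a disconnection $Y=Y_1\sqcup Y_2$ (which would split $X_{/Y}=X_{/Y_1}\sqcup X_{/Y_2}$ and produce $k\times k$ in global sections), so $Y$ is connected, and since $X$ is normal at every point of $Y$ the proposition cited before Definition of G2/G3 gives that $K(X_{/Y})$ is a field.

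There is essentially no obstacle here: all the work is done in the previous propositions, and the present statement is a repackaging. The only mildly non-formal point is the connectedness argument that turns $\mathrm{Lef}(X,Y)$ into the field property of $K(X_{/Y})$, which is needed because the definition of weak G3 builds in that property while $\mathrm{Lef}(X,Y)$ does not explicitly mention it.
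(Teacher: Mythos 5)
Your proposal is correct and follows essentially the same route as the paper, whose entire proof is ``Combine Proposition \ref{ccd=lef} and \ref{lef_to_g3}'' (with Proposition \ref{g3_to_lef} implicitly supplying the direction from weak G3 to $\mathrm{Lef}(X,Y)$). Your extra step verifying that $K(X_{/Y})$ is a field in the direction $(1)\Rightarrow(2)$ --- via $\mathrm{Lef}(X,Y)$ applied to $\mathcal{O}_X$ to get connectedness of $Y$, then normality --- is a hypothesis check the paper silently omits, and it is a correct and welcome addition rather than a deviation.
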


\begin{proof}
    Combine Proposition \ref{ccd=lef} and \ref{lef_to_g3}.
\end{proof}

\section{Main results}

In this section, we first show that the boundaries of the minimal compactifications of Siegel modular varieties satisfy the weak G3 properties with respect to the Hodge line bundle $\omega$, and then apply the results in the previous section to show our main theorems.

We let $A_g$ denote the Siegel modular variety (over $k$) of genus $g\ge2$ and level $1$ and let $A_{g,\Gamma}$ be the Siegel modular variety of genus $g\ge2$ and arithmetic level $\Gamma\subset \Sp_{2g}(\mathbb{Q})$ (i.e. $\Gamma$ is commensurable with $\Sp_{2g}(\mathbb{Z})$ and contains a principal congruence subgroup $\Gamma(N)$ for some $N$). Let $A_g^{\min}$ and $A^{\min}_{g,\Gamma}$ denote the minimal compactifications, and let $A_g^{\Sigma}$ and $A^{\Sigma}_{g,\Gamma}$ denote the toroidal compactifications with respect to the cone decomposition $\Sigma$. We use $D^{\min}$, $D_\Gamma^{\min}$, $D^{\Sigma}$ and $D_{\Gamma}^{\Sigma}$ to denote the corresponding boundary of the compactifications. It is well known that $A_{g,\Gamma}^{\min}$ and $A_{g,\Gamma}^{\Sigma}$ are normal and the boundaries are connected.

Let $\omega$ be the Hodge line bundle on $A_g$ and $\omega_{\Gamma}$ be the Hodge line bundle on $A_{g,\Gamma}$. We use $\omega^{\min}$ and $\omega^{\can}$ (resp. $\omega_{\Gamma}^{\min}$ and $\omega_{\Gamma}^{\can}$) to denote the certain extension and canonical extension of the Hodge line bundle on minimal and toroidal compactifications of $A_g$ (resp. $A_{g,\Gamma}$). We still use $\omega$ to denote it if there is no doubt which variety we are talking about.

Bruinier and Raum defined the notion of formal Siegel modular forms of weight $k$ and cogenus $l$ in Section $3$ in \cite{bruinier2024formal}. The complex vector space of the formal Siegel modular forms of weight $k$, cogenus $l$ and level $\Gamma$ is denoted as $\mathrm{FM}^{(g,l)}_{k}(\Gamma)$. We shall not recall their definition here, since they have proved that $\mathrm{FM}_k^{(g,1)}(\Gamma)$ can be realized as the global section of $\omega^k$ on the formal completion of $A_{g,\Gamma}^{\min}$ along the boundary. More precisely, let $\hat{A}^{\min}_{g,\Gamma}=(A^{\min}_{g,\Gamma})_{/D^{\min}_{\Gamma}}$ denote the formal completion and $\hat{\omega}$ be the pullback of $\omega$ on $\hat{A}^{\min}_{g,\Gamma}$.

\begin{thm}[Bruinier-Raum]
    The canonical map 
    $$
        H^0(\hat{A}^{\min}_{g,\Gamma}, \hat{\omega}^k)\longrightarrow
        \mathrm{FM}^{(g,1)}_{k}(\Gamma)
    $$ is an isomorphism.
\end{thm}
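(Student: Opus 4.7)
The plan is to identify both sides with the same space of formal Fourier--Jacobi expansions along the boundary, and to check that this identification realizes the canonical map of the theorem. By the definition of cohomology of coherent sheaves on formal schemes,
$$
    H^0(\hat{A}^{\min}_{g,\Gamma}, \hat{\omega}^k) \;=\; \varprojlim_n H^0\bigl(A^{\min}_{g,\Gamma},\, \omega^k \otimes \mathcal{O}_{A^{\min}_{g,\Gamma}}/\mathcal{I}^n\bigr),
$$
where $\mathcal{I}$ is the ideal sheaf of $D^{\min}_\Gamma$. Covering $\hat{A}^{\min}_{g,\Gamma}$ by formal neighborhoods of the rank-$(g-1)$ rational boundary components (and using the closure relations between the various boundary strata), a global section translates into a compatible collection of formal power series, one at each component. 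The canonical map of the theorem is precisely the assignment of these Fourier--Jacobi expansions.

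The key step is to show that these formal power series are exactly the objects packaged into $\mathrm{FM}^{(g,1)}_k(\Gamma)$ by Bruinier--Raum. I would proceed via the toroidal compactification: the proper morphism $\pi: A^{\Sigma}_{g,\Gamma} \to A^{\min}_{g,\Gamma}$ satisfies $\pi_* \mathcal{O}_{A^{\Sigma}_{g,\Gamma}} = \mathcal{O}_{A^{\min}_{g,\Gamma}}$ and $\pi_* \omega^{\can} = \omega^{\min}$. After completing both sides along their respective boundaries, the projection formula reduces the computation to $H^0$ of the pullback of $(\omega^{\can})^k$ on the formal completion of $A^{\Sigma}_{g,\Gamma}$ along $D^{\Sigma}_\Gamma$. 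On the toroidal side, the boundary is a normal crossings divisor with an explicit Mumford-type local description: sections become formal power series in the nilpotent coordinates whose coefficients are sections of appropriate bundles over lower-dimensional boundary strata, which is exactly the shape of a cogenus-$1$ formal Fourier--Jacobi expansion.

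The $\Gamma$-equivariance built into the formal scheme structure then recovers the modular transformation conditions that Bruinier--Raum impose on their formal series, and the induced map is a bijection by comparing the two descriptions stratum-by-stratum. The main obstacle is precisely this final matching: $A^{\min}_{g,\Gamma}$ is singular along $D^{\min}_\Gamma$ and its formal local rings at the cusps are invariant subrings under the stabilizers of the rational boundary components, so one must verify carefully that pushing forward from the toroidal side picks out exactly the invariants that Bruinier--Raum write down, and not a strictly larger space of formal expansions. This amounts to a direct compatibility check between the cone decomposition $\Sigma$ and the cusp stabilizers, which can be carried out cusp-by-cusp using the fact that the formal neighborhoods on both sides are glued by the same arithmetic data.
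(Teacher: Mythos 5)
This theorem is not proved in the paper at all: it is imported verbatim from Bruinier--Raum \cite{bruinier2024formal}, and the author explicitly declines even to recall the definition of $\mathrm{FM}^{(g,1)}_k(\Gamma)$. So there is no in-paper argument to compare yours against; your proposal has to stand on its own as a reconstruction of the Bruinier--Raum result.

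As such a reconstruction, your outline is pointed in the right direction --- passing to the toroidal compactification, using $\pi_*\mathcal{O}_{A^{\Sigma}_{g,\Gamma}}=\mathcal{O}_{A^{\min}_{g,\Gamma}}$ and $\pi_*\omega^{\can}=\omega^{\min}$ together with the theorem on formal functions, and reading off Fourier--Jacobi expansions from Mumford's local description of the boundary --- but it has a genuine gap, and it is exactly the step you yourself flag as ``the main obstacle.'' The space $\mathrm{FM}^{(g,1)}_k(\Gamma)$ is defined purely in terms of formal Fourier--Jacobi series $\sum_m \phi_m q^m$ attached to the corank-one cusps, with $\phi_m$ Jacobi forms subject to an explicit symmetry condition on their Fourier coefficients coming from the full cusp stabilizer. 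A section of $\hat{\omega}^k$ on the formal completion along the \emph{entire} boundary $D^{\min}_\Gamma=\bigsqcup_{j\le g-1}\bigsqcup_t A_{j,\Gamma_t}$ a priori carries additional gluing conditions along the deeper strata $A_{j,\Gamma_t}$ with $j<g-1$; your proposed cover by formal neighborhoods of the corank-one components does not by itself show that those deeper conditions are implied by the corank-one data (this is a Koecher-principle-type assertion for formal series and is the actual content of the theorem). Deferring this to ``a direct compatibility check \ldots cusp-by-cusp'' asserts the conclusion rather than proving it: without pinning down Bruinier--Raum's symmetry condition and verifying that it coincides with invariance under the arithmetic stabilizer acting on the completed local ring of $A^{\min}_{g,\Gamma}$ at each cusp, one cannot rule out that the formal-geometric side is strictly smaller (or larger) than $\mathrm{FM}^{(g,1)}_k(\Gamma)$. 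The proposal is a reasonable plan of attack, but the isomorphism itself is not established.
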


Now we prove that $D_{\Gamma}^{\min}$ is weakly G2 in $A_{g,\Gamma}^{\min}$ with respect to $\omega$.

\begin{lemma}\label{tr_deg}
    Let $A=\bigoplus_{m}A_m$ be a graded $k$-algebra, which is an integral domain, and let $K(A)$ be its quotient field. Suppose that there is a polynomial $P(z)\in \mathbb{Q}[z]$ of degree $n$, such that 
    $$
        \dim_kA_m\leq P(m)
    $$ for all $m$ sufficiently large. Then
    \begin{enumerate} [label=(\arabic*)]
        \item The transcendence degree $\trdeg K(A)/k\leq n+1$
        \item If the $\trdeg K(A)/k=n+1$ then $K(A)$ is a finitely generated field extension over $k$.
    \end{enumerate}
\end{lemma}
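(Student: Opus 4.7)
The strategy is to convert algebraic independence in $K(A)$ into algebraic independence of \emph{homogeneous} elements of $A$, and then use the polynomial bound on $\dim_k A_m$ to limit how many such elements can exist. The key preliminary step I would carry out is the following: if $x_1,\dots,x_s\in K(A)$ are algebraically independent over $k$, write $x_i=a_i/b_i$ with $a_i,b_i\in A$ and decompose each $a_i,b_i$ into its homogeneous components. The finite set $S$ of all such components is contained in $A$, and $x_i\in\operatorname{Frac}(k[S])$; hence $\trdeg \operatorname{Frac}(k[S])/k\ge s$. Since $k[S]$ is a finitely generated $k$-algebra, any maximal algebraically independent subset of $S$ is a transcendence basis of $\operatorname{Frac}(k[S])/k$, so some $s$-element subset of $S$ is algebraically independent. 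Thus whenever $\trdeg K(A)/k\ge s$, the ring $A$ itself contains $s$ algebraically independent homogeneous elements.

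To prove (1), I would suppose for contradiction that $\trdeg K(A)/k\ge n+2$ and extract algebraically independent homogeneous $f_0,\dots,f_{n+1}\in A$ of degrees $d_0,\dots,d_{n+1}$. Replacing $f_i$ by $f_i^{d/d_i}$ with $d=d_0\cdots d_{n+1}$, all generators acquire a common degree $d$ and $k[f_0,\dots,f_{n+1}]\subset A$ becomes a polynomial ring. Its degree-$md$ piece has dimension $\binom{m+n+1}{n+1}\sim m^{n+1}/(n+1)!$, forcing $\dim_k A_{md}\gtrsim m^{n+1}$, which contradicts $\dim_k A_{md}\le P(md)=O(m^n)$ for $m$ large.

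For (2), the same extraction yields algebraically independent homogeneous $f_0,\dots,f_n\in A$ of a common degree $d$. Set $R=k[f_0,\dots,f_n]$ and $F=\operatorname{Frac}(R)\subset K(A)$; since $\trdeg F/k=n+1=\trdeg K(A)/k$, the extension $K(A)/F$ is algebraic, and it suffices to bound $[K(A):F]$. For any $F$-linearly independent homogeneous $g_1,\dots,g_r\in A$ (which are then also $R$-linearly independent, as $A$ is a domain hence torsion-free over $R$), the $R$-module map $\bigoplus_{i=1}^{r} R(-\deg g_i)\hookrightarrow A$ is an injection of graded modules. Writing $\deg g_i=dq_i+r_i$ with $0\le r_i<d$, the piece of the direct sum in degree $md+\rho$ is nonzero only for those $i$ with $r_i=\rho$, contributing $\sum_{i:r_i=\rho}\binom{m-q_i+n}{n}\sim \#\{i:r_i=\rho\}\cdot m^n/n!$. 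Comparing this with $\dim_k A_{md+\rho}\le P(md+\rho)\sim c\,d^n m^n$ yields a uniform bound on $\#\{i:r_i=\rho\}$ for each residue $\rho$, and summing over $\rho\in\{0,\dots,d-1\}$ bounds $r$ absolutely. Hence $[K(A):F]<\infty$ and $K(A)=F(\gamma_1,\dots,\gamma_t)$ is finitely generated over $k$. The main technical obstacle is precisely this degree bookkeeping: because $R$ is supported only in degrees divisible by $d$ while the $g_i$ may have arbitrary degrees, one cannot compare $\bigoplus R(-\deg g_i)$ to $A$ uniformly without stratifying by residue class mod $d$ and applying the Hilbert-type bound within each class.
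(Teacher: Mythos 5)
Your argument is correct and is essentially the standard one: the paper does not actually prove this lemma but defers to \cite[Lem.~6.3]{hartshorne1968cohomological}, whose proof likewise reduces to algebraically independent \emph{homogeneous} elements of a common degree and compares Hilbert-function growth, so you have in effect reconstructed the cited argument. Two small points to tighten: your extraction step must produce homogeneous elements of positive degree, which requires $A_0$ to be algebraic over $k$ (implicit in the lemma and satisfied in the application, but without it the statement itself fails, e.g.\ for a ring concentrated in degree $0$); and in part (2) the reason $F$-independence of the $g_i$ implies $R$-independence is simply that $R\subset F$, while the resulting uniform bound on $F$-independent homogeneous elements controls $[K(A):F]$ because the $F$-span of $A$ inside $K(A)$ is then a finite-dimensional $F$-algebra that is a domain, hence a field, hence equal to $K(A)$.
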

    
\begin{proof}
    The proof is not hard; for example, see \cite[Lem. 6.3]{hartshorne1968cohomological}. 
\end{proof}

We hope to give a proof of the dimension bound of formal sections by using algebraic geometric language in a forthcoming paper (joint with Peihang Wu, to deal with general cases), as we have certain analogues of the results about the existence of slope bound and theta decomposition. However, in this paper, we simply apply the result on the dimension bound of formal Siegel modular forms of level $1$.

\begin{prop}
    The boundary $D^{\min}$ is weakly G2 in $A_g^{\min}$ with respect to $\omega$ when $g\geq 2$.
\end{prop}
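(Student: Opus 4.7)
The plan is to verify the three conditions in the definition of weakly G2 with respect to $\omega$: namely that (i) $K((A_g^{\min})_{/D^{\min}})$ is a field, (ii) $K(A_g^{\min}, \omega) = K(A_g^{\min})$, and (iii) the natural map $\alpha_{X,Y} : K(A_g^{\min}, \omega) \to K((A_g^{\min})_{/D^{\min}}, \hat{\omega})$ is a finite field extension.

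Condition (i) is immediate from the proposition in Section 2: $A_g^{\min}$ is normal and the boundary $D^{\min}$ is connected (as recalled in the paragraph preceding the lemma). Condition (ii) follows because the extension $\omega^{\min}$ of the Hodge bundle to the minimal compactification is ample (a classical theorem of Baily--Borel), so by part (3) of the proposition on $K(X,\mathcal{L})$ we have $K(A_g^{\min}, \omega) = K(A_g^{\min})$.

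For the main point (iii), I would form the graded $k$-algebra
\[
A \;=\; \bigoplus_{m\ge 0} H^0\!\bigl((A_g^{\min})_{/D^{\min}},\,\hat{\omega}^m\bigr),
\]
which is a domain since $K((A_g^{\min})_{/D^{\min}})$ is a field and $\hat{\omega}$ embeds into $\mathcal{M}_{X_{/Y}}$. The dimension bound for formal Siegel modular forms of level $1$ (invoked by the author from Bruinier--Raum) gives $\dim_k A_m \le P(m)$ for a polynomial $P$ of degree $d = \tfrac{1}{2}g(g+1) = \dim A_g$. By Lemma \ref{tr_deg}(1), $\trdeg K(A)/k \le d+1$. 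Conversely, the inclusion $\bigoplus_m H^0(A_g^{\min},\omega^m)\hookrightarrow A$ shows that $K(A)$ contains $K(A_g^{\min})$ (of transcendence degree $d$) together with any nonzero homogeneous element of positive degree, which is transcendental over the degree-$0$ part; hence $\trdeg K(A)/k = d+1$. By Lemma \ref{tr_deg}(2), $K(A)$ is a finitely generated field extension of $k$.

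It remains to identify $K((A_g^{\min})_{/D^{\min}},\hat{\omega})$ with the degree-$0$ subfield of $K(A)$ and deduce finiteness. Writing $K(A) = L(t)$ for $L$ the subfield of degree-$0$ ratios and $t$ any homogeneous element of positive degree, we have $L = K((A_g^{\min})_{/D^{\min}},\hat{\omega})$ with $\trdeg L/k = d$, and $L$ is finitely generated over $k$ as a subfield of the finitely generated extension $K(A)$ (by Zariski's theorem on subfields of finitely generated field extensions). Now both $K(A_g^{\min})$ and $L$ are finitely generated over $k$ of transcendence degree $d$, and $\alpha_{X,Y}$ embeds the former in the latter (injectivity is automatic since $\hat{i}$ is flat and $X$ irreducible); a finitely generated algebraic extension is finite, completing the proof.

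The principal obstacle is the polynomial dimension bound on formal modular forms in step (iii); once that input from Bruinier--Raum is granted, everything else reduces to Lemma \ref{tr_deg} and standard facts about fraction fields of graded integral domains and subfields of finitely generated extensions.
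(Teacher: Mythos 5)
Your proposal is correct and follows essentially the same route as the paper: the same graded ring $A=\bigoplus_m H^0((A_g^{\min})_{/D^{\min}},\hat{\omega}^m)$, the same dimension bound imported from Bruinier--Raum, and the same transcendence-degree lemma, with the finiteness of $\alpha$ deduced at the end from ``finitely generated plus algebraic implies finite.'' The one place you genuinely diverge is in passing from finite generation of $K(A)$ to finite generation of the degree-zero subfield $L=K((A_g^{\min})_{/D^{\min}},\hat{\omega})$: the paper embeds $A$ into $K((A_g^{\min})_{/D^{\min}},\hat{\omega})[T]$, uses normality of $A_g^{\min}$ to show $A$ is integrally closed there, and concludes $K(A)=K((A_g^{\min})_{/D^{\min}},\hat{\omega})(T)$, whereas you simply cite the standard fact that an intermediate field of a finitely generated field extension is finitely generated; your route is shorter and dispenses with the normality input. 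A minor imprecision: for a homogeneous $t$ of degree $e>1$ one only gets that $K(A)$ is \emph{algebraic} over $L(t)$ (since $s^e/t^m\in L$ for $s\in A_m$), not equal to it, but algebraicity together with the transcendence of $t$ over $L$ is all your transcendence-degree count requires, so the argument stands.
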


\begin{proof}
    The ampleness of $\omega$ implies $K(\hat{A}_g^{\min},\hat{\omega})=K(A_g^{\min})$, where $\hat{A}_g^{\min}=(A_{g}^{\min})_{/D^{\min}}$ is the formal completion along the boundary and $\hat{\omega}$ is the pullback of $\omega$.
    Let $A$ be the graded ring
    $$
        A=\bigoplus\limits_{m\geq0} H^0(\hat{A}_g^{\min},\hat{\omega}^m),
    $$ 
    and by \cite[Thm. 3.11]{bruinier2015kudla} we deduce that $A$ satisfies the condition in Lemma \ref{tr_deg} by a degree $d=g(g+1)/2$ polynomial $P(z)$. Assume that $\trdeg K(\hat{A}_g^{\min},\hat{\omega})/k=r$, and we choose a basis of algebraically independent elements $\xi_1,\dots,\xi_r\in K(\hat{A}_g^{\min},\hat{\omega})$. After replacing $\xi_i$ with suitable powers, we can assume further that $\xi_i=s_i/s_0$ for some $s_0,\dots,s_r\in H^0(\hat{A}_{g}^{\min},\hat{\omega}^{m_0})$. It is not difficult to check that $s_0,\dots,s_r$ are algebraically independent in $K(A)$ over $k$. So we can conclude that $\trdeg K(A)\geq r+1$ hence $\trdeg K(\hat{A}_g^{\min},\hat{\omega})/k\leq d$. Since $K(\hat{A}_g^{\min},\hat{\omega})$ contains $K(A_g^{\min})$, we have
    $$\trdeg K(\hat{A}_g^{\min},\hat{\omega})=\trdeg K(A_g^{\min})=d.$$
    
    Now we show that $K(\hat{A}_g^{\min},\hat{\omega})$ is a finitely generated field extension over $k$. Therefore, it is a finitely generated field extension over $K(A_g^{\min})$, and hence a finite extension over $K(A_g^{\min})$. We consider the homomorphism of graded rings
    $$
        A\longrightarrow K(\hat{A}_g^{\min},\hat{\omega})[T]
    $$ which sends $s\in H^0(\hat{A}_g^{\min},\hat{\omega}^m)$ to $(s/s_0^m)T^m$. We can write 
    $$
        A=H^0(\hat{A}_g^{\min},\bigoplus\limits_{m\geq 0}\hat{\omega}^m),
    $$ and 
    $$
        K(\hat{A}_g^{\min},\hat{\omega})[T] \subset K(\hat{A}_g^{\min})[T] =H^0(\hat{A}_g^{\min}, \mathcal{M}_{\hat{A}_g^{\min}}[T]).
    $$
    Since $A_g^{\min}$ is normal, we see that $\mathcal{O}_{\hat{A}_g^{\min}}$ is integrally closed in $\mathcal{M}_{\hat{A}_g^{\min}}$ and hence $\mathcal{O}_{\hat{A}_g^{\min}}[T]$ is integrally closed in $\mathcal{M}_{\hat{A}_g^{\min}}[T]$. As $\bigoplus_{m\geq 0}\hat{\omega}^m$ is locally isomorphic to $\mathcal{O}_{\hat{A}_g^{\min}}[T]$, we also see that $\bigoplus_{m\geq 0}\hat{\omega}^m$ is integrally closed in $\mathcal{M}_{\hat{A}_g^{\min}}[T]$. Hence $A$ is integrally closed in $K(\hat{A}_g^{\min},\hat{\omega})[T]$. Since both quotient fields have the same transcendence degree $d+1$, we must have
    $$
        K(A)=K(\hat{A}_g^{\min},\hat{\omega})(T).
    $$
    The second part of Lemma \ref{tr_deg} tells us that $K(A)$ is a finitely generated field extension over $k$, hence $K(\hat{A}_g^{\min},\hat{\omega})$ is also a finitely generated field extension over $k$. So $K(\hat{A}_g^{\min},\hat{\omega})$ is finite over $K(A_g^{\min})$ thus $D^{\min}$ is weakly G2 with respect to $\omega$.
\end{proof}

\begin{cor}
    The boundary $D^{\min}_{\Gamma}$ (resp. $D^{\Sigma}_{\Gamma}$) is weakly G2 with respect to $\omega$ in $A_{g,\Gamma}^{\min}$ (resp. $A_{g,\Gamma}^{\Sigma}$).
\end{cor}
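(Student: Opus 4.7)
The plan is to transport the weak G2 property just proved for $D^{\min} \subset A_g^{\min}$ along two kinds of maps: a finite Galois cover relating $A_g^{\min}$ and $A_{g,\Gamma}^{\min}$ through a common deeper level, and the proper birational contraction $A_{g,\Gamma}^{\Sigma} \to A_{g,\Gamma}^{\min}$.

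For the level change, I would first choose $N$ large enough so that $\Gamma' := \Gamma(N)$ is contained in $\Gamma \cap \Sp_{2g}(\mathbb{Z})$ and is normal in both $\Gamma$ and $\Sp_{2g}(\mathbb{Z})$. Then both
\[
\pi_1: A_{g,\Gamma'}^{\min} \longrightarrow A_{g,\Gamma}^{\min}, \qquad \pi_2: A_{g,\Gamma'}^{\min} \longrightarrow A_g^{\min}
\]
are finite surjective morphisms realizing the targets as quotients of the source by the finite groups $\Gamma/\Gamma'$ and $\Sp_{2g}(\mathbb{Z})/\Gamma'$ respectively, after modding out the image of $\pm I$ (the kernel of the action on the Siegel upper half space). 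By the construction of the Satake compactification and the modular definition of the Hodge line bundle, the scheme-theoretic preimage of each boundary under $\pi_i$ is $D_{\Gamma'}^{\min}$, and each Hodge bundle pulls back to $\omega_{\Gamma'}$. All varieties involved are normal with connected boundary, so the formal rational function rings are fields; ampleness of the Hodge bundle on each minimal compactification $X$ gives $K(X,\omega) = K(X)$. The hypotheses of Proposition \ref{weak_G2_galois} are therefore satisfied: applied to $\pi_2$ it promotes the weak G2 property of $D^{\min}$ in $A_g^{\min}$ to that of $D_{\Gamma'}^{\min}$ in $A_{g,\Gamma'}^{\min}$; applied to $\pi_1$ it then descends the latter to weak G2 of $D_{\Gamma}^{\min}$ in $A_{g,\Gamma}^{\min}$ with respect to $\omega$. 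For the toroidal case, the natural contraction $\pi: A_{g,\Gamma}^{\Sigma} \to A_{g,\Gamma}^{\min}$ is proper birational, satisfies $D_{\Gamma}^{\Sigma} = \pi^{-1}(D_{\Gamma}^{\min})$, and by the defining property of the canonical extension has $\omega_{\Gamma}^{\can} = \pi^* \omega_{\Gamma}^{\min}$; normality of $A_{g,\Gamma}^{\min}$ and connectedness of the two boundaries put us in the setting of Proposition \ref{weak_G2_birational}, which transports the weak G2 property from the minimal to the toroidal side.

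The main obstacle is not conceptual but bookkeeping: verifying that the two finite group actions act faithfully after modding out the $\pm I$ kernel, and checking that the scheme-theoretic preimages of the boundary strata under $\pi_1$, $\pi_2$, and $\pi$ really are the expected boundaries (and that the Hodge line bundles are compatible under pullback). Both are built into the standard construction of the Satake and toroidal compactifications of Siegel modular varieties, but they are the concrete checks that justify invoking the transfer propositions.
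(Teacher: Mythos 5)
Your proposal follows essentially the same route as the paper: transfer the weak G2 property from $A_g^{\min}$ to $A_{g,\Gamma}^{\min}$ by applying Proposition \ref{weak_G2_galois} to the quotient maps by the finite groups $\Gamma/\Gamma'$ and $\Sp_{2g}(\mathbb{Z})/\Gamma'$ for a common finite-index normal subgroup $\Gamma'$, then pass to the toroidal compactification via Proposition \ref{weak_G2_birational}. The one slip is that for $\Gamma \not\subset \Sp_{2g}(\mathbb{Z})$ no principal congruence subgroup $\Gamma(N)$ need be normal in $\Gamma$ (conjugation by elements of $\Sp_{2g}(\mathbb{Q})$ moves $\Gamma(N)$ to a different congruence subgroup); you should replace $\Gamma(N)$ by its normal core in $\Gamma$, which is still an arithmetic level, exactly as the paper does by choosing an arbitrary arithmetic subgroup normal in the larger group rather than a principal one.
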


\begin{proof}
    By Proposition \ref{weak_G2_birational}, it suffices to show the claim in the minimal compactification cases. We first show that for any two arithmetic level groups $\Gamma_1\subset \Gamma_2$,  the boundary $D^{\min}_{\Gamma_1}$ is weakly G2 with respect to $\omega$ in $A_{g,\Gamma_1}^{\min}$ if and only if $D^{\min}_{\Gamma_2}$ is weakly G2 with respect to $\omega$ in $A_{g,\Gamma_2}^{\min}$. Choose an arithmetic subgroup $\Gamma_0\subset\Gamma_1\subset \Gamma_2$ that is normal in $\Gamma_2$. Then we have $A_{g,\Gamma_i}^{\min}=(A_{g,\Gamma_0}^{\min})/G_i$ where $G_i=\Gamma_i/\Gamma_0$, and the canonical maps between the minimal compactifications are exactly the natural quotient maps.
    $$
        \begin{tikzcd}
            D_{\Gamma_0}^{\min}  \arrow[d] \arrow[r,hook]
            &A_{g,\Gamma_0}^{\min}  \arrow[d] \\
            D_{\Gamma_1}^{\min}  \arrow[d] \arrow[r,hook]
            &A_{g,\Gamma_1}^{\min}  \arrow[d] \\
            D_{\Gamma_2}^{\min}   \arrow[r,hook]
            &A_{g,\Gamma_2}^{\min}.   \\
        \end{tikzcd}
    $$
    Applying Proposition \ref{weak_G2_galois}, we see that $D^{\min}_{\Gamma_1}$ is weakly G2 with respect to $\omega$ in $A_{g,\Gamma_1}^{\min}$ if and only if $D^{\min}_{\Gamma_0}$ is weakly G2 with respect to $\omega$ in $A_{g,\Gamma_0}^{\min}$, which is further equivalent to $D^{\min}_{\Gamma_2}$ is weakly G2 with respect to $\omega$ in $A_{g,\Gamma_2}^{\min}$. The claim of this corollary follows from the fact that $\Gamma \cap\Gamma(1)$ is an arithmetic group.
    
\end{proof}

To prove that the boundaries are weakly G3 with respect to $\omega$, we need one more essential input.

\begin{prop}\label{etale_tower}
    Assume that $\Gamma\subset \Sp_{2g}(\mathbb{Q})$ is a neat level and $B\to A_{g,\Gamma}$ is a finite \'etale morphism. If $B$ is connected, then $B\cong A_{g,\Gamma_0}$ for some arithmetic level $\Gamma_0\subset\Gamma$.
\end{prop}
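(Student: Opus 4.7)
The plan is to use the classical dictionary between finite étale covers and finite-index subgroups of the fundamental group, together with the congruence subgroup property for $\Sp_{2g}(\mathbb{Z})$ due to Bass--Milnor--Serre, which is flagged earlier in the introduction as an essential ingredient.

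First I would pass to the complex analytic picture: since $k$ is algebraically closed of characteristic $0$, the étale fundamental group of $A_{g,\Gamma}$ is the profinite completion of the topological fundamental group of $A_{g,\Gamma}(\mathbb{C})$, and finite étale covers of $A_{g,\Gamma}$ over $k$ correspond (via base change to $\mathbb{C}$, GAGA, and the Riemann existence theorem) to finite topological covers of $A_{g,\Gamma}(\mathbb{C})$. Because $\Gamma$ is assumed neat, $\Gamma$ is torsion-free and acts freely on the Siegel upper half space $\mathfrak{H}_g$, so $A_{g,\Gamma}(\mathbb{C}) = \Gamma\backslash\mathfrak{H}_g$ and $\pi_1(A_{g,\Gamma}(\mathbb{C})) = \Gamma$ (using that $\mathfrak{H}_g$ is contractible). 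Therefore, to a connected finite étale cover $B\to A_{g,\Gamma}$ there corresponds a finite-index subgroup $\Gamma_0\subset \Gamma$ with $B(\mathbb{C}) = \Gamma_0\backslash\mathfrak{H}_g$ analytically.

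Next I would verify that $\Gamma_0$ satisfies the paper's definition of an arithmetic level, namely that it is commensurable with $\Sp_{2g}(\mathbb{Z})$ and contains some principal congruence subgroup $\Gamma(N)$. The commensurability is automatic, since $\Gamma_0$ has finite index in $\Gamma$, which is commensurable with $\Sp_{2g}(\mathbb{Z})$ by hypothesis. For the congruence property, choose $N_0$ with $\Gamma(N_0)\subset \Gamma$; then $\Gamma_0\cap\Gamma(N_0)$ has finite index in $\Gamma(N_0)$, hence finite index in $\Sp_{2g}(\mathbb{Z})$, and by Bass--Milnor--Serre (for $g\ge 2$) it must contain some $\Gamma(N)$. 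This gives $\Gamma(N)\subset \Gamma_0$, as required.

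Finally, I would identify $B$ with $A_{g,\Gamma_0}$ as varieties over $k$. One concrete way: use that $A_{g,\Gamma(N)}$ is a fine moduli space (for $N$ as above) and construct $A_{g,\Gamma_0}$ as the quotient $A_{g,\Gamma(N)}/(\Gamma_0/\Gamma(N))$ in the category of schemes; analytification gives $\Gamma_0\backslash\mathfrak{H}_g$, which matches $B(\mathbb{C})$. Since both $B$ and $A_{g,\Gamma_0}$ are normal quasi-projective varieties over $k$ with the same complex analytic space (and the same covering map to $A_{g,\Gamma}$), they are canonically isomorphic, and one can check the isomorphism descends from $\mathbb{C}$ back to $k$ by standard Galois descent / comparison (or simply work over $\mathbb{C}$ since $k$ is algebraically closed of characteristic zero). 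The main (non-trivial) obstacle is the invocation of the congruence subgroup property; everything else is bookkeeping around fundamental groups and moduli interpretations.
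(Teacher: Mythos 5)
Your proposal is correct and follows essentially the same route as the paper: identify connected finite \'etale covers with finite-index subgroups $\Gamma_0\subset\Gamma$ via the (\'etale/topological) fundamental group of $A_{g,\Gamma}=\Gamma\backslash\mathfrak{H}_g$, then invoke Bass--Milnor--Serre to conclude $\Gamma_0$ contains a principal congruence subgroup and is therefore an arithmetic level. Your write-up is in fact slightly more careful than the paper's on two points --- reducing the congruence subgroup property to the genuine subgroup $\Gamma_0\cap\Gamma(N_0)$ of $\Sp_{2g}(\mathbb{Z})$, and spelling out why the cover is then isomorphic to $A_{g,\Gamma_0}$ as a variety over $k$ --- but these are refinements of the same argument, not a different one.
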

\begin{proof}
    Fix a closed point $x \in A_{g,\Gamma}$. Its \'etale fundamental group is given by
    $$
        \pi_1(A_{g,\Gamma}, x) \cong \widehat{\Gamma} = \varprojlim_{\substack{[\Gamma : \Gamma'] < \infty \\ \Gamma' \triangleleft \Gamma}} \Gamma/\Gamma',
    $$
    where the limit runs over all finite-index normal subgroups of $\Gamma$. By the Galois correspondence for finite \'etale covers, the connected cover $B$ corresponds to a finite-index open subgroup $H$ of $\widehat{\Gamma}$. Such a subgroup $H$ pulls back to a finite-index subgroup $\Gamma_0 \subset \Gamma$.

    By the main theorem of \cite{bass1967solution}, $\Gamma_0$ contains a principal congruence subgroup $\Gamma(N)$ for some $N$ and is therefore an arithmetic level subgroup itself. Under the Galois correspondence, $\Gamma_0$ corresponds to the Siegel modular variety $A_{g,\Gamma_0}$, and hence $B \cong A_{g,\Gamma_0}$.
\end{proof}

Now we can prove that the boundaries are actually weakly G3 with respect to $\omega$.

\begin{thm} \label{weak_G3}
    The boundary $D^{\min}_{\Gamma}$ (resp. $D^{\Sigma}_{\Gamma}$) is weakly G3 with respect to $\omega$ in $A_{g,\Gamma}^{\min}$ (resp. $A_{g,\Gamma}^{\Sigma}$).
\end{thm}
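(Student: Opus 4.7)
The plan is to treat the minimal compactification case first; the toroidal case will then follow from Proposition \ref{weak_G2_birational} applied to the proper birational map $A_{g,\Gamma}^\Sigma\to A_{g,\Gamma}^{\min}$, whose preimage of $D_\Gamma^{\min}$ is $D_\Gamma^\Sigma$. By Proposition \ref{weak_G2_galois} together with the existence of a neat principal congruence subgroup inside any arithmetic $\Gamma$, I first reduce to the case where $\Gamma$ is neat. Set $X:=A_{g,\Gamma}^{\min}$ and $Y:=D_\Gamma^{\min}$; by the preceding corollary, $Y$ is weakly G2 with respect to $\omega$, so $d:=[K(X_{/Y},\hat{\omega}):K(X)]$ is finite, and I argue by contradiction assuming $d>1$. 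The overall strategy is to manufacture, from such a nontrivial extension, a finite cover $X'\to X$ that must coincide with a Siegel compactification at deeper level, where the connectedness of the boundary will yield a contradiction.

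By the primitive element theorem, pick $\zeta\in K(X_{/Y},\hat{\omega})$ of degree $d$ over $K(X)$, and apply Theorem \ref{G3_section_refine} (Badescu--Schneider) to obtain a finite surjective morphism $f:X'\to X$ of degree $d$ from an irreducible normal projective $X'$, together with a lift $i:Y\hookrightarrow X'$ satisfying $f\circ i=\mathrm{id}_Y$, étale in an open Zariski neighborhood of $i(Y)$, and with $K(X')=K(X)(\zeta)$. The next step is to show $f$ is étale over the interior $A_{g,\Gamma}$: since $A_{g,\Gamma}$ is smooth (by neatness) and $f^{-1}(A_{g,\Gamma})$ is normal, Zariski--Nagata purity forces the branch locus of the restriction to be pure of codimension one if nonempty, and its closure in $X$ would then be an effective divisor; by van der Geer's theorem such a divisor must meet $Y$, and combined with the étale structure of $f$ near $i(Y)$ and a careful control of preimages of boundary points lying off $i(Y)$, this forces the branch locus to be empty. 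Thus $f^{-1}(A_{g,\Gamma})\to A_{g,\Gamma}$ is finite étale; it is connected as an open subset of the irreducible $X'$, so Proposition \ref{etale_tower} identifies it with $A_{g,\Gamma_0}$ for some arithmetic $\Gamma_0\subset\Gamma$ of index $d$. Taking normalizations, $X'\cong A_{g,\Gamma_0}^{\min}$, and in particular $f^{-1}(Y)=D_{\Gamma_0}^{\min}$ is connected.

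To derive the contradiction, I compare two descriptions of $K(X'_{/f^{-1}(Y)})$. On the one hand, connectedness of $D_{\Gamma_0}^{\min}$ together with the normality of $X'$ forces $K(X'_{/f^{-1}(Y)})$ to be a field. On the other hand, since $\zeta\in K(X_{/Y})$, the embedding $K(X')=K(X)(\zeta)\hookrightarrow K(X_{/Y})$ factors the minimal polynomial $g(t)$ of $\zeta$ over $K(X)$ as $g(t)=(t-\zeta)h(t)$ in $K(X_{/Y})[t]$ with $\deg h=d-1\ge1$, and Theorem \ref{formal_function_base_change} gives
$$
K(X'_{/f^{-1}(Y)})\cong\bigl[K(X')\otimes_{K(X)}K(X_{/Y})\bigr]_0\cong K(X_{/Y})\times\bigl[K(X_{/Y})[t]/(h(t))\bigr]_0,
$$
a nontrivial product of rings, hence not a field. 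This contradicts the previous sentence and forces $d=1$. The hard part, I expect, will be the étaleness step: van der Geer and Zariski--Nagata purity constrain any interior branch divisor to hit $Y$, but translating this into an actual contradiction requires ruling out ramification of $f$ at preimages of $Y$ lying off the section $i(Y)$, which relies on the interplay between the local étale structure along $i(Y)$, the normality and irreducibility of $X'$, and the boundary geometry of the minimal compactification; once this geometric input is in place, the remainder is a clean application of Proposition \ref{etale_tower} and the formal-functions machinery developed in the earlier sections.
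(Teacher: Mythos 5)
Your overall architecture matches the paper's: reduce to the neat minimal case, use Badescu--Schneider to build a finite normal cover $f\colon X'\to X$ inside $K(X_{/Y},\hat{\omega})$ with a section over $Y$ and \'etaleness near $i(Y)$, show $f$ is \'etale over the interior, invoke Proposition \ref{etale_tower} (i.e.\ Bass--Milnor--Serre) to identify the cover with $A_{g,\Gamma_0}$, and then use connectedness of the boundary of $A_{g,\Gamma_0}^{\min}$ to force $d=1$. Your closing contradiction via $[K(X')\otimes_{K(X)}K(X_{/Y})]_0\cong K(X_{/Y})\times[K(X_{/Y})[t]/(h(t))]_0$ not being a field is a legitimate (and slightly different) way to finish compared with the paper, which instead argues that $f$ is \'etale everywhere and an isomorphism over the boundary, hence an isomorphism.

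However, there is a genuine gap at exactly the step you flag as ``the hard part'': proving that $f$ is \'etale over $A_{g,\Gamma}$. Your proposed mechanism --- purity of the branch locus plus van der Geer forcing the branch divisor to meet $Y$ --- cannot by itself yield a contradiction. The closure of the branch divisor meeting $Y$ only tells you that the closure of the ramification locus upstairs meets $f^{-1}(Y)$ somewhere; since $f^{-1}(Y)$ may well contain points off the section $i(Y)$ where you have no \'etaleness (and you cannot yet argue $f^{-1}(Y)=i(Y)$, since that identification in the paper comes only \emph{after} \'etaleness over the interior is established), nothing is contradicted. The paper's actual input here is different and is the real content of the proof: it shows the image $\Delta_\Gamma$ of the ramification locus is stable under all Hecke correspondences (using the base-change compatibility of the relative normalizations, Proposition \ref{weak_G2_base change}, and the fact that taking integral closure commutes with \'etale base change), and then concludes $\Delta_\Gamma=\emptyset$ because the Hecke orbit of any closed point of $A_{g,\Gamma}$ is Zariski dense while $\Delta_\Gamma$ sits inside a divisor. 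Without this Hecke-stability argument, or a genuine substitute for it, your proof does not close; van der Geer's theorem enters the paper only later, to verify the divisor-intersection hypothesis in the Lefschetz-condition statements, not to establish \'etaleness here.
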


\begin{proof}
    By Proposition \ref{weak_G2_galois}, it is sufficient to prove the theorem in the minimal compactification cases. We first deal with the case where $\Gamma$ is neat and $A_{g,\Gamma}$ is smooth, and the general case follows by the same argument as in the previous corollary.
    Applying Theorem \ref{G3_section_refine} repeatedly, there exists a normal projective variety $B_{\Gamma}^{\min}$, which is the relative normalization of $A_{g,\Gamma}^{\min}$ in $K(\hat{A}_{g,\Gamma}^{\min},\hat{\omega})$, and a canonical finite morphism $f_{\Gamma}:B_{\Gamma}^{\min}\to A_{g,\Gamma}^{\min}$ that admits a lift of the embedding $i:D_{\Gamma}^{\min}\hookrightarrow A_{g,\Gamma}^{\min}$.
    $$
    \begin{tikzcd}
        &&D_{\Gamma}^{\min} \arrow[d,hook,"i"] \arrow[ld,hook,"i'"swap]\\
        \spec K(\hat{A}_{g,\Gamma}^{\min},\hat{\omega}) \arrow[r]   &B_{\Gamma}^{\min} \arrow[r,"f_{\Gamma}"]
        &A_{g,\Gamma}^{\min}.
    \end{tikzcd}
    $$
    Moreover $f_{\Gamma}$ is \'etale on an open neighbourhood of $i'(D_{\Gamma}^{\min})$. Let $B_{\Gamma}=f_{\Gamma}^{-1}(A_{g,\Gamma})$ and $\Delta'_{\Gamma}\subset B_{\Gamma}$ be the ramification locus of $f_{\Gamma}$. Therefore we know that $\Delta'_{\Gamma}$ is contained in a codimension $1$ closed subvaritey. We denote $\Delta_{\Gamma}:=f_{\Gamma}(\Delta'_{\Gamma})\subset A_{g,\Gamma}$ and will prove that $\Delta_{\Gamma}=\emptyset$.
    $$
    \begin{tikzcd}
        \Delta'_{\Gamma} \arrow[r] \arrow[d,hook]
        &\Delta_{\Gamma} \arrow[d,hook]   \\
        B_{\Gamma} \arrow[r,"f_{\Gamma}"]
        & A_{g,\Gamma}.
    \end{tikzcd}
    $$
    Our method is to prove that $\Delta_{\Gamma}$ is stable under Hecke actions. That means, for any $x \in \Sp_{2g}(\mathbb{Q})$ and arithmetic level $\Gamma'\subset \Gamma\cap x^{-1}\Gamma x$, consider the following diagram
    $$
        \begin{tikzcd}            
            & A_{g,\Gamma '} \arrow[ld,"\pi_{\Gamma',\Gamma}"swap] \arrow[r,"{[x]}"]
            & A_{g, x\Gamma' x^{-1}} \arrow[rd, "\pi_{x\Gamma' x^{-1},\Gamma}"]
            & \\
            A_{g,\Gamma}
            &&& A_{g,\Gamma}
        \end{tikzcd}
    $$ 
    where $\pi_{\Gamma',\Gamma}$ is the natural map given by the inclusion of levels and $[x]$ is the isomorphism given by $x$, we will prove that $\mathcal{H}_x(\Delta_{\Gamma}):=\pi_{x\Gamma' x^{-1},\Gamma}\circ[x](\pi_{\Gamma,\Gamma'}^{-1}(\Delta_{\Gamma}))\subset \Delta_{\Gamma}$.

    We claim that for any two arithmetic level groups $\Gamma_1\subset \Gamma_2$ we have $\pi_{\Gamma_1,\Gamma_2}^{-1}(\Delta_{\Gamma_2})=\Delta_{\Gamma_1}$, then 
    $$
        \mathcal{H}_x(\Delta_{\Gamma})=\pi_{x\Gamma' x^{-1},\Gamma}\circ[x](\Delta_{\Gamma'})=\pi_{x\Gamma' x^{-1},\Gamma}(\Delta_{x\Gamma'x^{-1}})\subset \Delta_{\Gamma}.
    $$ 
    Without loss of generality, we can further assume $\Gamma_1$ is normal in $\Gamma_2$, and we denote $G=\Gamma_2/\Gamma_1$ which acts freely on $A_{g,\Gamma_1}$. Applying Theorem \ref{weak_G2_base change}, we have a Cartesian square
    $$
        \begin{tikzcd}
            B_{\Gamma_1} \arrow[d,"f_{\Gamma_1}"swap] \arrow[r,"{\tilde{\pi}_{\Gamma_1,\Gamma_2}}"]
            & B_{\Gamma_2} \arrow[d,"f_{\Gamma_2}"] \\
            A_{g,\Gamma_1} \arrow[r,"\pi_{\Gamma_1,\Gamma_2}"]
            &A_{g,\Gamma_2}.
        \end{tikzcd}
    $$
    By \cite[\href{https://stacks.math.columbia.edu/tag/0476}{Tag 0476}]{stacks-project} and the flatness of $\pi_{\Gamma_1,\Gamma_2}$, we have $\tilde{\pi}_{\Gamma_1,\Gamma_2}^{-1}(B_{\Gamma_2}\backslash\Delta_{\Gamma_2}')=B_{\Gamma_1}\backslash\Delta_{\Gamma_1}'$, and hence we have $\pi_{\Gamma_1,\Gamma_2}^{-1}(\Delta_{\Gamma_2})=\Delta_{\Gamma_1}$.
    
    Now we can conclude that $\Delta_{\Gamma}$ is empty because the Hecke orbit of every closed point $z$ in $A_{g,\Gamma}$ is Zariski dense but $\Delta_{\Gamma}$ is contained in a closed subvariety of codimension $1$. Therefore, $f_{\Gamma}:B_{\Gamma}\to A_{g,\Gamma}$ is finite \'etale and then coincides $\pi_{\Gamma_0,\Gamma}:A_{g,\Gamma_0}\to A_{g,\Gamma}$ for some $\Gamma_0\subset \Gamma$ by Proposition \ref{etale_tower}. By definition $B_{\Gamma}^{\min}$ is the relative normalization of $A^{\min}_{g,\Gamma}$ in $K(B_{\Gamma})=K(A_{g,\Gamma_0})$, thus $B_{\Gamma}^{\min}=A_{g,\Gamma_0}^{\min}$. So we have $B_{\Gamma}^{\min}\backslash B_{\Gamma}$ is connected thus it has to be $i'(D_{\Gamma}^{\min})$. As $f_{\Gamma}:B^{\min}_{\Gamma}\to A_{g,\Gamma}^{\min}$ is \'etale on $B_{\Gamma}$ and an open neighborhood of $B_{\Gamma}^{\min}\backslash B_{\Gamma}$, it is \'etale on the whole variety $B_{\Gamma}^{\min}$. Since $f_{\Gamma}$ is an isomorphism on $f^{-1}_{\Gamma}(D_{\Gamma}^{\min})$, we find that $f_{\Gamma}$ is an isomorphism and hence $D_{\Gamma}^{\min}$ is weakly G3 in $A_{g,\Gamma}^{\min}$ with respect to $\omega$.
\end{proof}

Now we recall the result on the maximal dimension of the proper subvarieties in $A_g$. A bound is given by van der Geer in \cite[Cor. 2.7]{van1999cycles}.

\begin{thm}[van der Geer]\label{complete_dim_bound}
    A proper subvariety of $A_g$ has codimension at least $g$.
\end{thm}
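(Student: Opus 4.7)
The plan is to invoke the Chern-class argument of van der Geer from \cite[Cor.~2.7]{van1999cycles}. The idea is to combine (i) the positivity of the Hodge line bundle on any complete subvariety with (ii) Mumford-type tautological relations that force high powers of $\lambda_1 := c_1(\omega)$ to vanish in $CH^*(A_g)_{\mathbb{Q}}$ once the exponent exceeds $g(g-1)/2$. For (i), note that $\omega = \det \mathbb{E}$ is the restriction of the ample line bundle $\omega^{\min}$ on $A_g^{\min}$, so for any complete subvariety $Z \subseteq A_g$ of dimension $d$ the restriction $\omega|_Z$ is ample and hence
$$
\int_Z c_1(\omega)^d > 0.
$$

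For (ii), on $A_g$ the Hodge bundle $\mathbb{E}$ sits inside the flat Gauss--Manin bundle $\mathcal{H}^1_{\mathrm{dR}}$ via the Hodge filtration
$$
0 \to \mathbb{E} \to \mathcal{H}^1_{\mathrm{dR}} \to \mathbb{E}^\vee \to 0,
$$
and the flatness of $\mathcal{H}^1_{\mathrm{dR}}$ forces its rational Chern classes to vanish, yielding Mumford's relation
$$
c(\mathbb{E}) \cdot c(\mathbb{E}^\vee) = 1 \quad \text{in } CH^*(A_g)_{\mathbb{Q}}.
$$
Writing $\lambda_i = c_i(\mathbb{E})$ and using Chern roots $\alpha_1,\ldots,\alpha_g$ of $\mathbb{E}$, this is equivalent to $p_{2k}(\alpha_1,\ldots,\alpha_g) = 0$ for every $k \geq 1$. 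Together with the further Mumford-type vanishing $\lambda_g = 0$ in $CH^g(A_g)_{\mathbb{Q}}$ (obtained by realizing $\lambda_g$ via the zero section of the universal abelian scheme and showing that a representative of it is supported on the boundary of a toroidal compactification $A_g^{\Sigma}$), one carries out a finite symmetric-function reduction to conclude that $\lambda_1^d = 0$ in $CH^d(A_g)_{\mathbb{Q}}$ as soon as $d > g(g-1)/2$. Combined with the positivity in (i), this rules out a complete subvariety of dimension $d > g(g-1)/2$, i.e.\ it forces $\mathrm{codim}_{A_g}(Z) \geq g$.

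The main obstacle is the combinatorial step at the end of (ii): one uses Newton's identities together with the relations $p_{2k}=0$ to express $\lambda_1^{2}$ in terms of $\lambda_2$, then $\lambda_j^2$ in terms of $\lambda_{j-1}\lambda_{j+1}$ up to lower-order corrections, and iterates these substitutions until every monomial of total degree exceeding $g(g-1)/2$ has been written in the ideal generated by $\lambda_g$, which is zero. Carrying this out cleanly requires an explicit induction on $g$ and is the technical heart of van der Geer's proof in \cite{van1999cycles}; once the bound $\lambda_1^{g(g-1)/2+1} = 0$ is in hand, the theorem follows from (i) in one line.
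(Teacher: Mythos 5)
The paper does not prove this statement itself: it is imported verbatim from van der Geer \cite[Cor.~2.7]{van1999cycles}, and your sketch is a faithful reconstruction of exactly the cited argument (ampleness of $\omega$ restricted to a complete subvariety versus the vanishing $\lambda_1^{g(g-1)/2+1}=0$ in the tautological ring, the latter deduced from Mumford's relation $c(\mathbb{E})\cdot c(\mathbb{E}^\vee)=1$ together with $\lambda_g=0$). The only imprecision is minor: flatness of the Gauss--Manin bundle yields the vanishing of its Chern classes in rational cohomology rather than directly in $CH^*(A_g)_{\mathbb{Q}}$ (the Chow-theoretic relation requires Grothendieck--Riemann--Roch applied to the universal family), but the cohomological statement already suffices for the intersection-number argument, so the proof is unaffected.
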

\begin{rmk}
    The explicit bound of the minimal codimension of proper subvariety of $A_g$ is given by Grushevsky, Mondello, Manni, and Tsimerman in \cite{grushevsky2024compact}
\end{rmk}

Now we can apply the propositions in the previous section to get the results on cohomological dimension and Grothendieck--Lefschetz conditions, and in particular on automatic convergence.

\begin{thm} \label{lef_g-1}
    The pair $(A_{g,\Gamma}^{\min},D^{\min}_{\Gamma})$ (resp. $(A_{g,\Gamma}^{\Sigma},D^{\Sigma}_{\Gamma})$) satisfies the Grothendieck--Lefschetz condition $\mathrm{Lef}(A_{g,\Gamma}^{\min},D^{\min}_{\Gamma})$ (resp. $\mathrm{Lef}(A_{g,\Gamma}^{\Sigma},D^{\Sigma}_{\Gamma})$). In particular, We have
    $$
        H^0(A_{g,\Gamma}^{\min},\omega^k)\cong H^0(\hat{A}_{g,\Gamma}^{\min},\hat{\omega}^k).
    $$
\end{thm}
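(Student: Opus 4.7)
The plan is to split into two cases: the minimal compactification case follows directly from Proposition \ref{g3_to_lef} applied to the Hodge bundle, while the toroidal case is transferred via the canonical proper birational morphism $\pi: A_{g,\Gamma}^{\Sigma} \to A_{g,\Gamma}^{\min}$.

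For the minimal compactification, I would verify the hypotheses of Proposition \ref{g3_to_lef}. The variety $A_{g,\Gamma}^{\min}$ is normal (hence locally $S_2$), the Hodge bundle $\omega$ is ample, and $D_\Gamma^{\min}$ is weakly G3 with respect to $\omega$ by Theorem \ref{weak_G3}. The remaining condition—that every effective divisor of $A_{g,\Gamma}^{\min}$ meets $D_\Gamma^{\min}$—follows from Theorem \ref{complete_dim_bound}: an effective divisor disjoint from the boundary would be a codimension-$1$ proper closed subvariety of $A_{g,\Gamma}$, contradicting van der Geer's codimension-$g$ bound since $g\ge 2$; the bound is extended from $A_g$ to $A_{g,\Gamma}$ via the finite maps through $A_{g,\Gamma\cap\Gamma(1)}$. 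Proposition \ref{g3_to_lef} then yields $\mathrm{Lef}(A_{g,\Gamma}^{\min}, D_\Gamma^{\min})$, and applying it to $U=A_{g,\Gamma}^{\min}$ with $\mathcal{E}=\omega^k$ gives the ``in particular'' isomorphism immediately.

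For the toroidal compactification, the morphism $\pi$ satisfies $\pi^{-1}(D_\Gamma^{\min})=D_\Gamma^{\Sigma}$ and $\pi_*\mathcal{O}_{A_{g,\Gamma}^{\Sigma}}=\mathcal{O}_{A_{g,\Gamma}^{\min}}$ by Zariski's main theorem. Given an open neighborhood $V\supset D_\Gamma^{\Sigma}$, the set $U:=A_{g,\Gamma}^{\min}\setminus \pi(A_{g,\Gamma}^{\Sigma}\setminus V)$ is an open neighborhood of $D_\Gamma^{\min}$ (since $\pi$ is an isomorphism over $A_{g,\Gamma}$ and proper), satisfies $\pi^{-1}(U)\subset V$, and shares the same formal completion along the boundary as $V$. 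For pullback vector bundles $\pi^*\mathcal{F}$ on $\pi^{-1}(U)$, the toroidal Lef follows from the minimal-case Lef combined with the formal functions theorem applied to $\pi$, using $\pi_*\pi^*\mathcal{F}=\mathcal{F}$.

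The main obstacle is extending this to arbitrary vector bundles on $V$: since $\omega$ is only semi-ample (not ample) on $A_{g,\Gamma}^{\Sigma}$, the resolution technique in the proof of Proposition \ref{g3_to_lef} cannot be run with $\omega$ alone. My plan is to fix a genuinely ample line bundle $\mathcal{A}$ on $A_{g,\Gamma}^{\Sigma}$, first establish the line-bundle isomorphism $H^0(V,\mathcal{A}^m)\cong H^0(\hat{A}_{g,\Gamma}^{\Sigma},\hat{\mathcal{A}}^m)$ for large $m$ by combining Theorem \ref{weak_G3} for $\omega$ with a Hartogs-style extension across $A_{g,\Gamma}^{\Sigma}\setminus V$ (justified by the same van der Geer argument applied on $A_{g,\Gamma}^{\Sigma}$) and a twisting comparison between $\mathcal{A}$ and $\pi^*\omega$ modulo boundary divisor components, then run the resolution argument of Proposition \ref{g3_to_lef} with $\mathcal{A}$ in place of $\omega$ to produce the full Lef condition for arbitrary vector bundles.
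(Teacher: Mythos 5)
Your argument for the minimal compactification is exactly the paper's: Theorem \ref{weak_G3} gives weak G3 with respect to the ample bundle $\omega$ on $A_{g,\Gamma}^{\min}$, Theorem \ref{complete_dim_bound} (transferred to level $\Gamma$) gives that every effective divisor meets $D^{\min}_{\Gamma}$, and Proposition \ref{g3_to_lef} yields $\mathrm{Lef}(A_{g,\Gamma}^{\min},D^{\min}_{\Gamma})$, with the displayed isomorphism obtained by taking $U=A_{g,\Gamma}^{\min}$ and $\mathcal{E}=\omega^{k}$. For the toroidal case you correctly isolate the only real issue --- that $\omega$ is merely semi-ample on $A_{g,\Gamma}^{\Sigma}$ --- and your final plan (run Proposition \ref{g3_to_lef} against a genuinely ample line bundle) is also the paper's plan; the paper simply executes it in one line by taking the explicit ample bundle $\mathcal{A}=\omega^{k}(-D^{\Sigma}_{\Gamma})$ for $k\gg0$ and observing the sandwich
$$
K(A_{g,\Gamma}^{\Sigma})=K(A_{g,\Gamma}^{\Sigma},\mathcal{A})\subset K(\hat{A}_{g,\Gamma}^{\Sigma},\hat{\mathcal{A}})\subset K(\hat{A}_{g,\Gamma}^{\Sigma},\hat{\omega}^{k})=K(A_{g,\Gamma}^{\Sigma}),
$$
where the first inclusion comes from ampleness of $\mathcal{A}$, the second from the inclusion $\mathcal{A}\hookrightarrow\omega^{k}$ of subsheaves of $K_X$, and the final equality is Theorem \ref{weak_G3} on the toroidal compactification. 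This gives weak G3 with respect to $\mathcal{A}$ directly, and Proposition \ref{g3_to_lef} --- whose proof already contains both the Hartogs-type extension and the resolution by powers of the ample bundle --- finishes the argument. Consequently your separate ``Hartogs-style extension,'' the ``twisting comparison modulo boundary components,'' and the preliminary detour through pullback bundles $\pi^{*}\mathcal{F}$ and the formal functions theorem are not needed (nothing in them is wrong, but they do no work in the end). The one hypothesis you should state explicitly when invoking Proposition \ref{g3_to_lef} on the toroidal side is that every effective divisor of $A_{g,\Gamma}^{\Sigma}$ meets $D^{\Sigma}_{\Gamma}$, which follows from the same van der Geer argument since a divisor missing the boundary would be a proper divisor of $A_{g,\Gamma}$; you mention this only parenthetically.
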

\begin{proof}
    By Theorem \ref{complete_dim_bound}, every effective divisor intersects nontrivially with the boundary, thus the condition $\mathrm{Lef}(A_{g,\Gamma}^{\min},D^{\min}_{\Gamma})$ follows from Proposition \ref{lef_to_g3} and Theorem \ref{weak_G3}. For the pair $(A_{g,\Gamma}^{\Sigma},D^{\Sigma}_{\Gamma})$, we need the weak G3 property with respect to the ample line bundle $\omega^k(-D^{\Sigma}_{\Gamma})$ where $k$ is an integer large enough. This follows from the fact that the composition of the inculsion maps
    $$
        K(A_{g,\Gamma}^{\Sigma})\subset K(\hat{A}_{g,\Gamma}^{\Sigma},\hat{\omega}^{k}(-D^{\Sigma}_{\Gamma}))\subset K(\hat{A}_{g,\Gamma}^{\Sigma},\hat{\omega}^{k})=K(A_{g,\Gamma}^{\Sigma})
    $$ is an isomorphism, by Theorem \ref{weak_G3}.
\end{proof}

\begin{thm}\label{ccd_bound}
    Let $d=\dim A_{g,\Gamma}=g(g+1)/2$, then the cohomological dimension $\ccd(A_{g,\Gamma})\leq d-2$.
\end{thm}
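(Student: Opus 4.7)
The plan is to deduce this bound cleanly from Proposition \ref{ccd=lef} applied to the smooth projective pair $(A_{g,\Gamma}^{\Sigma}, D^{\Sigma}_{\Gamma})$. Recall that proposition characterizes the inequality $\ccd(X \setminus Y) \leq d - 2$ (for smooth projective $X$ of dimension $d \geq 2$) in terms of $\mathrm{Lef}(X, Y)$ together with $Y$ meeting every effective divisor on $X$, and both hypotheses are essentially in hand: Theorem \ref{lef_g-1} provides the Lefschetz condition and Theorem \ref{complete_dim_bound} will provide the divisor intersection property.

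First I would reduce to the case where $\Gamma$ is neat, because Proposition \ref{ccd=lef} requires smoothness and a smooth projective toroidal compactification $A_{g,\Gamma}^{\Sigma}$ is only guaranteed for neat level with a suitably fine cone decomposition. For arbitrary $\Gamma$, pick a neat normal arithmetic subgroup $\Gamma_0 \subset \Gamma$ of finite index (for instance $\Gamma \cap \Gamma(N)$ for some $N \geq 3$) and consider the finite surjective morphism $\pi \colon A_{g,\Gamma_0} \to A_{g,\Gamma}$. Since we work in characteristic $0$, the normalized trace gives a splitting of the unit $\mathcal{F} \hookrightarrow \pi_\ast \pi^\ast \mathcal{F}$ for every coherent sheaf $\mathcal{F}$, and because $\pi$ is finite this realises $H^i(A_{g,\Gamma}, \mathcal{F})$ as a direct summand of $H^i(A_{g,\Gamma_0}, \pi^\ast \mathcal{F})$. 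Hence $\ccd(A_{g,\Gamma}) \leq \ccd(A_{g,\Gamma_0})$ and we may assume $\Gamma$ is neat.

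With $\Gamma$ neat, choose a smooth projective toroidal compactification $A_{g,\Gamma}^{\Sigma}$, so that $A_{g,\Gamma}^{\Sigma} \setminus D^{\Sigma}_{\Gamma} = A_{g,\Gamma}$. The condition $\mathrm{Lef}(A_{g,\Gamma}^{\Sigma}, D^{\Sigma}_{\Gamma})$ is exactly Theorem \ref{lef_g-1}. For the divisor hypothesis, suppose $E$ were an effective divisor on $A_{g,\Gamma}^{\Sigma}$ disjoint from $D^{\Sigma}_{\Gamma}$. Then $E$ would sit inside $A_{g,\Gamma}$ as a closed subvariety, hence as a proper (i.e. complete) subvariety of codimension $1$; pushing forward under the finite map $A_{g,\Gamma} \to A_g$ we would obtain a proper subvariety of $A_g$ of codimension $1$, contradicting Theorem \ref{complete_dim_bound} because $g \geq 2$.

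Both hypotheses being verified, Proposition \ref{ccd=lef} yields $\ccd(A_{g,\Gamma}) = \ccd(A_{g,\Gamma}^{\Sigma} \setminus D^{\Sigma}_{\Gamma}) \leq d - 2$, as desired. The real work sits upstream --- Theorem \ref{weak_G3} (whose proof required the Hecke-stability/Hecke-density argument together with the congruence subgroup result of Bass--Milnor--Serre and the slope/dimension bound from Bruinier--Raum feeding into Lemma \ref{tr_deg}) and van der Geer's Theorem \ref{complete_dim_bound} --- so the only genuine obstacle at this stage is the neatness reduction, which is handled by the trace-summand argument above.
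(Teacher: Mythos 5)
Your proof is correct, and the core step (the neat case) is essentially the paper's: both arguments come down to applying Proposition \ref{ccd=lef} to a smooth toroidal compactification, with the Lefschetz condition supplied by the weak G3 property of the boundary (Theorem \ref{weak_G3}, packaged either as Theorem \ref{lef_g-1} or as Proposition \ref{ccd=g3}) and the divisor-intersection hypothesis supplied by van der Geer's Theorem \ref{complete_dim_bound}. Where you genuinely diverge is the reduction from arbitrary arithmetic $\Gamma$ to neat level. The paper passes to a normal subgroup $\Gamma'\subset\Gamma$, views $A_{g,\Gamma}$ as the coarse space of the tame quotient stack $[A_{g,\Gamma'}/G]$, and uses the spectral sequence $H^p(G,H^q(A_{g,\Gamma'},\omega_{\Gamma'}^k))\Rightarrow H^{p+q}(A_{g,\Gamma},\omega_{\Gamma}^k)$ together with the vanishing of higher group cohomology in characteristic $0$; since this only controls the sheaves $\omega_\Gamma^k$, it must then invoke Lemma \ref{ccd_ample} to recover the bound for all coherent sheaves. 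Your normalized-trace argument splits $\mathcal{F}\hookrightarrow\pi_*\pi^*\mathcal{F}$ for \emph{every} coherent sheaf at once (using only that $\pi$ is finite surjective onto the normal variety $A_{g,\Gamma}$ and that we are in characteristic $0$), so it gives $\ccd(A_{g,\Gamma})\leq\ccd(A_{g,\Gamma_0})$ directly, without the detour through Lemma \ref{ccd_ample}, and it does not even require $\Gamma_0$ to be normal in $\Gamma$. The only cosmetic point worth tightening is the final push-forward to $A_g$ in your divisor argument: for general $\Gamma$ there is no map $A_{g,\Gamma}\to A_g$, but since you have already replaced $\Gamma$ by $\Gamma\cap\Gamma(N)\subset\Sp_{2g}(\mathbb{Z})$ the map exists at that stage (alternatively, a complete codimension-one subvariety of $A_{g,\Gamma}$ pulls back to one of $A_{g,\Gamma\cap\Gamma(1)}$ and then maps finitely onto one of $A_g$), so this is harmless.
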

\begin{proof}
    Firstly, we assume that there exists a toroidal compactification that $A_{g,\Gamma}^{\Sigma}$ is smooth. Since the boundary $D^{\Sigma}_{\Gamma}$ is weakly G3 in $A_{g,\Gamma}^{\Sigma}$ with respect to the ample line bundle $\omega^k(-D^{\Sigma}_{\Gamma})$, then $\ccd(A_{g,\Gamma})\leq d-2$ by Proposition \ref{ccd=g3}. 
    
    For general cases, choose a small enough normal arithmetic subgroup $\Gamma'\subset\Gamma$, we have proved $\ccd(A_{g,\Gamma'})\leq d-2$. Denoting $G=\Gamma/\Gamma'$, we have $A_{g,\Gamma}$ is the coarse moduli of the tame quotient stack $[A_{g,\Gamma'}/G]$, thus we have the spectral sequence
    $$  
        E_2^{p,q}=H^{p}(G,H^{q}(A_{g,\Gamma'},\omega_{\Gamma'}^k)) \Rightarrow H^{p+q}([A_{g,\Gamma'}/G],i^*\omega_{\Gamma}^k)=H^{p+q}(A_{g,\Gamma},\omega_{\Gamma}^k)
    $$
    where $i:[A_{g,\Gamma'}/G]\to A_{g,\Gamma}$ is the map from a stack to its coarse moduli, and the last equality is given by the fact that $i_*$ is exact and $i_*i^*\omega_\Gamma^k=\omega_{\Gamma}^k\otimes i_*\mathcal{O}_{[A_{g,\Gamma'}/G]}=\omega_{\Gamma}^k$. Since $G$ is finite, the classical result is that $H^p(G,M)$ is $|G|$-torsion for $p>0$, hence vanish if $M$ is uniquely $|G|$-divisible. So we have $E^{p,q}_2=0$ if $p>0$, thus
    $$
        H^{q}(A_{g,\Gamma'},\omega_{\Gamma'}^k)^G=H^{q}(A_{g,\Gamma},\omega_{\Gamma}^k)
    $$
    for any integer $k$. By Lemma \ref{ccd_ample} and the ampleness of $\omega_\Gamma$ we can conclude that $\ccd(A_{g,\Gamma})\leq d-2$.
\end{proof}

We also consider the stratification of the boundary to deal with the properties of the smaller strata. The classical result on the description of minimal compactification $A_{g,\Gamma}^{\min}$ is that
$$
    A_{g,\Gamma}^{\min}=\bigsqcup\limits_{j=0}^{g}\bigsqcup_{t\in I_j}A_{j,\Gamma_t}
$$
where $I_j$ is a finite set. We define the stratification of the boundary as
$$
    D_{\Gamma,l}^{\min}:=\bigsqcup\limits_{j=0}^{l}\bigsqcup_{t\in I_j}A_{j,\Gamma_t},
$$
and then $D_{\Gamma,g-1}^{\min}=D_{\Gamma}^{\min}$. For toroidal compactification $A_{g,\Gamma}^{\Sigma}$, we denote 
$$
    D_{\Gamma,l}^{\Sigma}:=\pi^{-1}(D^{\min}_{\Gamma,l})
$$
where $\pi:A_{g,\Gamma}^{\Sigma}\to A_{g,\Gamma}^{\min}$ is the canonical morphism. Let $F_{\Gamma,l}^{\Sigma}=D_{\Gamma,l}^{\Sigma} \backslash D_{\Gamma,l-1}^{\Sigma}$ denote the preimage of $\bigsqcup_{t\in I_l}A_{l,\Gamma_t}$

\begin{prop}
    Assume that $A_{g,\Gamma}^{\Sigma}$ is smooth and $D_{\Gamma}^{\Sigma}$ is a Cartier divisor. Then $\ccd(A_{g,\Gamma}^{\Sigma}\backslash D_{\Gamma,l}^{\Sigma})\leq d-2$ for $l\geq 1$, where $d=\dim A_{g,\Gamma}^{\Sigma}=g(g+1)/2$.
\end{prop}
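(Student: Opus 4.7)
The plan is to prove the bound by descending induction on $l$, with base case $l = g-1$: since $V_l := A_{g,\Gamma}^{\Sigma} \setminus D_{\Gamma,l}^{\Sigma}$ equals $A_{g,\Gamma}$ when $l = g-1$, the bound is exactly Theorem \ref{ccd_bound}. For the inductive step, supposing $\ccd(V_{l+1}) \leq d-2$ for $l \geq 1$, I consider the closed subset $Z := F_{\Gamma,l+1}^{\Sigma} = V_l \setminus V_{l+1}$ and invoke the excision long exact sequence relating $H^i(V_l, \mathcal{F})$, $H^i_Z(V_l, \mathcal{F})$, and $H^i(V_{l+1}, \mathcal{F}|_{V_{l+1}})$. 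Since the group on the right vanishes for $i > d-2$ by the inductive hypothesis, the problem reduces to showing that the local cohomological dimension of $V_l$ with support in $Z$, which I denote $\mathrm{cd}_Z(V_l)$, is at most $d-2$.

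The key input is the local structure of $Z$: because $A_{g,\Gamma}^{\Sigma}$ is smooth and $D_{\Gamma}^{\Sigma}$ is a simple normal crossings divisor, at each point of $Z$ the subset $F_{\Gamma,l+1}^{\Sigma}$ consists locally of those smooth boundary divisor components whose generic points map to cusps of genus exactly $l+1$; this finite union is set-theoretically cut out by a single equation (the product of the local defining functions), so the arithmetic rank is at most $1$ locally, yielding $\mathcal{H}^q_Z(\mathcal{F}) = 0$ for $q \geq 2$. I then apply the Grothendieck spectral sequence $E_2^{p,q} = H^p(V_l, \mathcal{H}^q_Z(\mathcal{F})) \Rightarrow H^{p+q}_Z(V_l, \mathcal{F})$. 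Writing $\mathcal{H}^q_Z(\mathcal{F}) = \varinjlim_n \mathcal{E}xt^q(\mathcal{O}_{V_l}/\mathcal{I}_Z^n, \mathcal{F})$ and using that $\ccd$ is invariant under nilpotent thickening while cohomology commutes with filtered colimits on the quasi-compact quasi-separated scheme $V_l$, one obtains $H^p(V_l, \mathcal{H}^q_Z(\mathcal{F})) = 0$ for $p > \ccd(Z)$; combining with the $q \leq 1$ vanishing gives $\mathrm{cd}_Z(V_l) \leq 1 + \ccd(Z)$.

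To bound $\ccd(Z)$ by $d-3$, I would use the Leray spectral sequence for the proper morphism $\pi|_Z : Z \to \sqcup_t A_{l+1, \Gamma_t}$ (the restriction of $\pi : A_{g,\Gamma}^{\Sigma} \to A_{g,\Gamma}^{\min}$): the fibers have dimension at most $(d-1) - (l+1)(l+2)/2$, and since $l + 1 \geq 2$, Theorem \ref{ccd_bound} applied at genus $l+1$ gives $\ccd(A_{l+1, \Gamma_t}) \leq (l+1)(l+2)/2 - 2$; summing yields $\ccd(Z) \leq d-3$, so $\mathrm{cd}_Z(V_l) \leq d-2$. The main obstacle is justifying the local description of $F_{\Gamma,l+1}^{\Sigma}$ as a union of smooth boundary divisor components at each point of $V_l$, which uses essentially both the smoothness of $A_{g,\Gamma}^{\Sigma}$ and the Cartier assumption on $D_{\Gamma}^{\Sigma}$, together with the Faltings--Chai description of the toroidal stratification and its compatibility with the contraction $\pi$; a secondary technicality is the dimension bound on fibers of $\pi|_Z$, which follows from the standard description of toroidal boundary strata as torus and abelian scheme fibrations over the universal abelian scheme on $A_{l+1, \Gamma_t}$.
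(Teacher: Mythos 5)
Your proposal is correct and follows essentially the same route as the paper: the descending induction with the excision long exact sequence is just the unrolled form of the paper's excision spectral sequence for the stratification, and the two key inputs coincide (the Cartier hypothesis forcing the local cohomology sheaves along each codimension-one stratum $F_{\Gamma,j}^{\Sigma}$ to be concentrated in degree $1$, and the fibration of $F_{\Gamma,j}^{\Sigma}$ over $\bigsqcup_t A_{j,\Gamma_t}$ combined with Theorem \ref{ccd_bound} at genus $j\geq 2$ to get $\ccd(F_{\Gamma,j}^{\Sigma})\leq d-3$). Your extra care in justifying $H^p(V_l,\mathcal{H}^q_Z(\mathcal{F}))=0$ for $p>\ccd(Z)$ via filtered colimits of coherent sheaves on thickenings is a point the paper leaves implicit.
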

\begin{proof}
    By definition, the morphism
    $$\pi:F_{\Gamma,l}^{\Sigma} \to \bigsqcup_{t\in I_j}A_{l,\Gamma_t}
    $$
    restricted from $\pi$ has relative dimension $d-l(l+1)/2-1$. Applying Theorem \ref{ccd_bound} we have
    $$
        \ccd(F_{\Gamma,j}^{\Sigma})=\ccd(\bigsqcup_{t\in I_j}A_{j,\Gamma_t})+d-j(j+1)/2-1\leq d-3
    $$ if $j>l$
    For any ample line bundle $\mathcal{L}$ on $A_{g,\Gamma}^{\Sigma}$, we have the excision spectral sequence
    $$
        E_1^{p,q}=
        H^{p+q}_{F_{\Gamma,g-p}^{\Sigma}}
        (A_{g,\Gamma}^{\Sigma}\backslash D_{\Gamma,g-p-1}^{\Sigma}, \mathcal{L})
        \Rightarrow 
        H^{p+q}(A_{g,\Gamma}^{\Sigma}\backslash D_{\Gamma,l}^{\Sigma}, \mathcal{L}), \quad0\leq p\leq l-1,
    $$
    with respect to the stratum
    $$
        D_{\Gamma,l+1}^{\Sigma} \backslash D_{\Gamma,l}^{\Sigma}\subset
        D_{\Gamma,l+2}^{\Sigma} \backslash D_{\Gamma,l}^{\Sigma}\subset\cdots
        \subset D_{\Gamma,g-1}^{\Sigma} \backslash D_{\Gamma,l}^{\Sigma}\subset
        A_{g,\Gamma}^{\Sigma} \backslash D_{\Gamma,l}^{\Sigma}.
    $$
    The cohomology group with support can be computed by the spectral sequence 
    $$
        E_2^{r,s}=H^r(F_{\Gamma,j}^{\Sigma}, \mathcal{H}^s_{F_{\Gamma,j}^{\Sigma}}(A_{g,\Gamma}^{\Sigma}\backslash D_{\Gamma,j-1}^{\Sigma}, \mathcal{L})) \Rightarrow H^{r+s}_{F_{\Gamma,j}^{\Sigma}}
        (A_{g,\Gamma}^{\Sigma}\backslash D_{\Gamma,j-1}^{\Sigma}, \mathcal{L}),
    $$
    where $\mathcal{H}^s_{F_{\Gamma,j}^{\Sigma}}(A_{g,\Gamma}^{\Sigma}\backslash D_{\Gamma,j-1}^{\Sigma}, \mathcal{L})$ is the sheaf of cohomology with support, see \cite[\href{https://stacks.math.columbia.edu/tag/0A39}{Tag 0A39}]{stacks-project} for a precise definition. As $D_{\Gamma}^{\Sigma}$ is a Cartier divisor, we have $\mathcal{H}^s_{F_{\Gamma,j}^{\Sigma}}(A_{g,\Gamma}^{\Sigma}\backslash D_{\Gamma,j-1}^{\Sigma}, \mathcal{L})=0$ unless $s=1$. Since $\ccd(F_{\Gamma,j}^{\Sigma})\leq g(g+1)/2-3$, we have $E_2^{r,s}=0$ if $r> g(g+1)/2-3$ and $s\neq 1$. Thus in the first spectral sequence, $E_1^{p,q}=0$ if $p+q> d-2$. Therefore, $H^{i}(A_{g,\Gamma}^{\Sigma}\backslash D_{\Gamma,l}^{\Sigma}, \mathcal{L})=0$ if $i>d-2$, hence $\ccd(A_{g,\Gamma}^{\Sigma}\backslash D_{\Gamma,l}^{\Sigma})\leq d-2$ by Lemma \ref{ccd_ample}.
\end{proof}

\begin{cor} \label{weak_g3_l}
    The boundary stratum $D^{\min}_{\Gamma,l}$ is weakly G3 in $A_{g,\Gamma}^{\min}$ with respect to $\omega$ for $l\ge 1$. 
\end{cor}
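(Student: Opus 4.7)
The plan is to deduce weak G3 for $D^{\min}_{\Gamma,l}$ with respect to $\omega$ by combining the cohomological dimension bound from the previous proposition with the chain of implications between coherent cohomological dimension, the Grothendieck--Lefschetz condition, and weak G3 developed in Section 4, and then descending the property from a smooth toroidal compactification to the minimal one.

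The first step is to reduce to the case where $\Gamma$ is neat. Choose a normal neat subgroup $\Gamma' \trianglelefteq \Gamma$ with $G = \Gamma/\Gamma'$, so that $A_{g,\Gamma}^{\min} = A_{g,\Gamma'}^{\min}/G$. Since the Satake stratification is compatible with the quotient, the preimage of $D^{\min}_{\Gamma,l}$ is exactly $D^{\min}_{\Gamma',l}$, and Proposition \ref{weak_G2_galois} (applied exactly as in the proof that $D^{\min}_{\Gamma}$ is weakly G2) reduces the claim to the case $\Gamma$ neat. In this case one may choose a cone decomposition $\Sigma$ making $A_{g,\Gamma}^{\Sigma}$ smooth and $D_{\Gamma}^{\Sigma}$ a Cartier divisor, placing us in the setup of the previous proposition.

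Under these hypotheses, the previous proposition gives $\ccd(A_{g,\Gamma}^{\Sigma} \setminus D^{\Sigma}_{\Gamma,l}) \leq d-2$. Proposition \ref{ccd=lef} then yields $\mathrm{Lef}(A_{g,\Gamma}^{\Sigma}, D^{\Sigma}_{\Gamma,l})$ together with the fact that $D^{\Sigma}_{\Gamma,l}$ meets every effective divisor. To feed this into Proposition \ref{lef_to_g3} with the pulled-back Hodge bundle $\omega$, one needs $K(A_{g,\Gamma}^{\Sigma}, \omega) = K(A_{g,\Gamma}^{\Sigma})$. This follows from $\pi_* \mathcal{O}_{A_{g,\Gamma}^{\Sigma}} = \mathcal{O}_{A_{g,\Gamma}^{\min}}$ together with the projection formula, which identify $H^0(A_{g,\Gamma}^{\Sigma}, \omega^m) = H^0(A_{g,\Gamma}^{\min}, \omega^m)$; ampleness of $\omega$ on the minimal compactification then supplies enough sections to generate the full function field. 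Proposition \ref{lef_to_g3} therefore yields that $D^{\Sigma}_{\Gamma,l}$ is weakly G3 in $A_{g,\Gamma}^{\Sigma}$ with respect to $\omega$.

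The final step is to descend from the toroidal to the minimal compactification using the proper birational morphism $\pi: A_{g,\Gamma}^{\Sigma} \to A_{g,\Gamma}^{\min}$, whose target is normal and satisfies $\pi^{-1}(D^{\min}_{\Gamma,l}) = D^{\Sigma}_{\Gamma,l}$. Proposition \ref{weak_G2_birational} then transports the weak G3 property downward, concluding that $D^{\min}_{\Gamma,l}$ is weakly G3 in $A_{g,\Gamma}^{\min}$ with respect to $\omega$. The main obstacle I foresee is the implicit requirement that $K(X_{/Y})$ be a field in the weak G3 definition, which forces the connectedness of $D^{\min}_{\Gamma,l}$ (and consequently of $D^{\Sigma}_{\Gamma,l}$, via Zariski's connectedness theorem on the normal target); this needs to be argued from the explicit incidence pattern of the Satake stratification, exploiting that $l \geq 1$ so that all higher-dimensional boundary pieces are glued together through the lower strata they contain in their closures.
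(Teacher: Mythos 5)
Your proposal is correct and follows essentially the same route as the paper: reduce to a neat level via Proposition \ref{weak_G2_galois}, combine the $\ccd$ bound for $A_{g,\Gamma}^{\Sigma}\setminus D^{\Sigma}_{\Gamma,l}$ with Propositions \ref{ccd=lef} and \ref{lef_to_g3} to get weak G3 on the smooth toroidal compactification, and then descend to the minimal compactification. The only (cosmetic) difference is that you package the toroidal-to-minimal descent as an application of Proposition \ref{weak_G2_birational}, whereas the paper runs the same formal-functions/projection-formula computation by hand; your explicit flagging of the connectedness of $D^{\min}_{\Gamma,l}$ needed for $K(X_{/Y})$ to be a field is a point the paper leaves implicit.
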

\begin{proof}
    Choose a normal arithmetic subgroup $\Gamma'\subset\Gamma$ small enough such that there exists a smooth toroidal compactification $A_{g,\Gamma'}^{\Sigma}$. Then by the previous proposition and Proposition \ref{ccd=lef}, we have that every effective divisor in $A_{g,\Gamma'}^{\Sigma}$ intersects nontrivially with $D^{\Sigma}_{\Gamma',l}$ and $\mathrm{Lef}(A_{g,\Gamma'}^{\Sigma},D^{\Sigma}_{\Gamma',l})$. Thus, every effective divisor in $A_{g,\Gamma}^{\min}$ intersects nontrivially with $D^{\min}_{\Gamma,l}$, and by Proposition \ref{lef_to_g3}, $D^{\Sigma}_{\Gamma',l}$ is weakly G3 in $A_{g,\Gamma'}^{\Sigma}$ with respect to $\omega$. By Theorem of formal functions, we have 
    $$
       \pi_* \mathcal{O}_{\hat{A}_{g,\Gamma'}^{\Sigma}}= \mathcal{O}_{\hat{A}_{g,\Gamma'}^{\min}}
    $$
    where $\hat{A}_{g,\Gamma'}^{\min}=(A_{g,\Gamma'}^{\min})_{/D^{\min}_{\Gamma',l}}$ and $\hat{A}_{g,\Gamma'}^{\Sigma}=(A_{g,\Gamma'}^{\Sigma})_{/D^{\Sigma}_{\Gamma',l}}$ are formal completions. Hence $\pi_*\hat{\omega}^{k}=\hat{\omega}^k$ and then
    $$
       K(A_{g,\Gamma}^{\min})=K(A_{g,\Gamma}^{\Sigma})=K(\hat{A}_{g,\Gamma'}^{\Sigma},\hat{\omega})=K(\hat{A}_{g,\Gamma'}^{\min},\hat{\omega}).
    $$
    So $D^{\min}_{\Gamma',l}$ is weakly G3 in $A_{g,\Gamma'}^{\min}$ with respect to $\omega$, and then $D^{\min}_{\Gamma,l}$ is weakly G3 in $A_{g,\Gamma}^{\min}$ with respect to $\omega$ by Proposition \ref{weak_G2_galois}. 
     
\end{proof}

\begin{cor} \label{lef_l}
    The pair $(A_{g,\Gamma}^{\min},D^{\min}_{\Gamma,l})$ (resp. $(A_{g,\Gamma}^{\Sigma},D^{\Sigma}_{\Gamma,l})$) satisfies the Grothendieck--Lefschetz condition $\mathrm{Lef}(A_{g,\Gamma}^{\min},D^{\min}_{\Gamma,l})$ (resp. $\mathrm{Lef}(A_{g,\Gamma}^{\Sigma},D^{\Sigma}_{\Gamma,l})$) for $l\geq 1$. In particular, We have
    $$
        H^0(A_{g,\Gamma}^{\min},\omega^k)\cong H^0(\hat{A}_{g,\Gamma}^{\min},\hat{\omega}^k)
    $$
    where $\hat{A}_{g,\Gamma}^{\min}=(A_{g,\Gamma}^{\min})_{/D^{\min}_{\Gamma,l}}$ is the formal completion and $\hat{\omega}$ is the pullback of $\omega$.
\end{cor}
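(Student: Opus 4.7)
The plan is to deduce both Lefschetz conditions from the weak G3 property established in Corollary \ref{weak_g3_l} by applying Proposition \ref{g3_to_lef}, and then extract the statement about $H^0(\omega^k)$ as a direct instance of $\mathrm{Lef}$ applied to the whole variety and the line bundle $\omega^k$.

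For the minimal compactification, $A_{g,\Gamma}^{\min}$ is normal (hence locally satisfies $S_2$) and the Hodge line bundle $\omega$ is ample. Corollary \ref{weak_g3_l} gives that $D^{\min}_{\Gamma,l}$ is weakly G3 in $A_{g,\Gamma}^{\min}$ with respect to $\omega$. The intersection condition, namely that every effective divisor meets $D^{\min}_{\Gamma,l}$ nontrivially, was already verified inside the proof of Corollary \ref{weak_g3_l} via the cohomological dimension bound combined with Proposition \ref{ccd=lef}, so I can quote it. Proposition \ref{g3_to_lef} then yields $\mathrm{Lef}(A_{g,\Gamma}^{\min}, D^{\min}_{\Gamma,l})$ immediately.

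For the toroidal compactification, $A_{g,\Gamma}^{\Sigma}$ is normal but $\omega$ is only semi-ample, so I follow the trick used in Theorem \ref{lef_g-1}: for sufficiently large $k$, the line bundle $\omega^{k}(-D^{\Sigma}_{\Gamma})$ is ample on $A_{g,\Gamma}^{\Sigma}$. The weak G3 property of $D^{\Sigma}_{\Gamma,l}$ with respect to this ample bundle reduces to the sandwich
$$
K(A_{g,\Gamma}^{\Sigma}) \subset K(\hat{A}_{g,\Gamma}^{\Sigma},\hat{\omega}^{k}(-D^{\Sigma}_{\Gamma})) \subset K(\hat{A}_{g,\Gamma}^{\Sigma},\hat{\omega}^{k}) = K(A_{g,\Gamma}^{\Sigma}),
$$
where the formal completions are now taken along $D^{\Sigma}_{\Gamma,l}$; identification of the outer terms uses the minimal case (Corollary \ref{weak_g3_l}) together with the theorem on formal functions applied to the contraction $\pi: A_{g,\Gamma}^{\Sigma} \to A_{g,\Gamma}^{\min}$, giving $\pi_*\mathcal{O}_{\hat{A}_{g,\Gamma}^{\Sigma}} = \mathcal{O}_{\hat{A}_{g,\Gamma}^{\min}}$ along the respective strata. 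The intersection property transfers from the minimal to the toroidal level: an effective divisor $E$ on $A_{g,\Gamma}^{\Sigma}$ that is contracted by $\pi$ is contained in $D^{\Sigma}_{\Gamma}\subset D^{\Sigma}_{\Gamma,l}$; if $E$ is not contracted, then $\pi(E)$ is an effective divisor in $A_{g,\Gamma}^{\min}$ which meets $D^{\min}_{\Gamma,l}$ by the minimal case, whence $E$ meets $D^{\Sigma}_{\Gamma,l}$. Proposition \ref{g3_to_lef} then gives $\mathrm{Lef}(A_{g,\Gamma}^{\Sigma}, D^{\Sigma}_{\Gamma,l})$.

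With both Lefschetz conditions in hand, applying them to the open set $U = A_{g,\Gamma}^{\min}$ (resp.\ $A_{g,\Gamma}^{\Sigma}$) and the vector bundle $\mathcal{E} = \omega^k$ yields the isomorphism $H^0(A_{g,\Gamma}^{\min},\omega^k)\cong H^0(\hat{A}_{g,\Gamma}^{\min},\hat{\omega}^k)$ and its toroidal analogue. The only place that requires care is the toroidal case, specifically verifying the sandwich identity with the modified ample line bundle and confirming the intersection property; both are short but depend on chaining the formal-function identity across $\pi$ with the minimal-side input of Corollary \ref{weak_g3_l}. No genuinely new ideas are needed beyond those already used for $l=g-1$ in Theorem \ref{lef_g-1}.
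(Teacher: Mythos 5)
Your overall route is the same as the paper's: the minimal case is exactly Corollary \ref{weak_g3_l} plus Proposition \ref{g3_to_lef}, and the toroidal case uses the weak G3 property transferred across $\pi$ together with the sandwich argument for the ample bundle $\omega^k(-D^{\Sigma}_{\Gamma})$, as in Theorem \ref{lef_g-1}. The extra detail you supply on the intersection property, however, contains a reversed inclusion: you write that a $\pi$-contracted effective divisor $E$ is ``contained in $D^{\Sigma}_{\Gamma}\subset D^{\Sigma}_{\Gamma,l}$,'' but by definition $D^{\Sigma}_{\Gamma,l}=\pi^{-1}(D^{\min}_{\Gamma,l})$ is the preimage of the genus $\le l$ strata, so $D^{\Sigma}_{\Gamma,l}\subseteq D^{\Sigma}_{\Gamma}=D^{\Sigma}_{\Gamma,g-1}$ and not the other way around; containment in the full boundary does not by itself force $E$ to meet the smaller stratum $D^{\Sigma}_{\Gamma,l}$. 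The conclusion is still true --- a contracted divisor is a union of closures of preimages of boundary strata, and the closure of each stratum $A_{j,\Gamma_t}$ in $A^{\min}_{g,\Gamma}$ contains zero-dimensional cusps lying in $D^{\min}_{\Gamma,0}\subseteq D^{\min}_{\Gamma,l}$ --- but the paper avoids this case analysis entirely by deducing the intersection property (at a small enough level, then descending) from the bound $\ccd(A^{\Sigma}_{g,\Gamma'}\setminus D^{\Sigma}_{\Gamma',l})\le d-2$ via Proposition \ref{ccd=lef}, as recorded in the proof of Corollary \ref{weak_g3_l}; you should either quote that or repair the contracted-divisor step as above.
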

\begin{proof}
    The minimal compactification case follows from Corollary \ref{weak_g3_l} and Proposition \ref{g3_to_lef}.
    For the toroidal compactification case, we have that $D^{\Sigma}_{\Gamma,l}$ is weakly G3 in $A_{g,\Gamma}^{\Sigma}$ with respect to $\omega$ by Proposition \ref{weak_G2_birational}, and then weakly G3 with respect to $\omega^k(-D^{\Sigma}_{\Gamma})$ by the same argument as in Theorem \ref{lef_g-1}. Therefore, $\mathrm{Lef}(A_{g,\Gamma}^{\Sigma},D^{\Sigma}_{\Gamma,l})$ follows from Proposition \ref{g3_to_lef}.
\end{proof}

\bibliographystyle{alpha}
%\bibliography{sample}

\end{document}